
\documentclass[reqno]{amsart}



\usepackage[utf8]{inputenc}
\usepackage[T1]{fontenc}
\usepackage[british]{babel,isodate}
\cleanlookdateon
\usepackage{lmodern}
\usepackage{enumerate}
\usepackage{tikz}
\usepackage{tikz-cd}
\usetikzlibrary{arrows}
\usepackage{cancel}
\usetikzlibrary{babel}
\usepackage{lscape}
\usepackage{url}

\usepackage{hyperref}
\hypersetup{linktocpage,colorlinks=true,citecolor=purple,linkcolor=blue,urlcolor=blue,filecolor=black}

\usepackage{float}
\usepackage{graphicx, import}
\usepackage{mathtools}
\usepackage{dsfont}
\usepackage{pinlabel}
\usepackage{yhmath}
\usepackage{enumitem}


\usepackage{xargs}                      
\usepackage{xcolor} 
\definecolor{OliveGreen}{rgb}{0,0.6,0}
 
\usepackage[colorinlistoftodos, prependcaption,textsize=tiny]{todonotes}
\newcommandx{\unsure}[2][1=]{\todo[linecolor=red,backgroundcolor=red!25,bordercolor=red,#1]{#2}}
\newcommandx{\change}[2][1=]{\todo[linecolor=blue,backgroundcolor=blue!25,bordercolor=blue,#1]{#2}}
\newcommandx{\info}[2][1=]{\todo[linecolor=OliveGreen,backgroundcolor=OliveGreen!25,bordercolor=OliveGreen,#1]{#2}}

\newcommand{\centre}[1]{\begin{array}{c} #1 \end{array}}

\usepackage{stackengine}

\usepackage{amsmath,amsthm,amssymb, amsfonts, amscd}
\usepackage{rotating,subcaption}
\usepackage{url}
\usepackage{graphicx,bm}
\usepackage{mathtools}
\usepackage[mathscr]{euscript}
\allowdisplaybreaks
\usepackage{lipsum}
\usepackage{bm}
\usepackage[capitalize]{cleveref}






\usepackage{xpatch}
\makeatletter
\AtBeginDocument{\xpatchcmd{\@thm}{\thm@headpunct{.}}{\thm@headpunct{}}{}{}}
\makeatother

\newtheorem{theorem}{Theorem}[section]
\newtheorem{proposition}[theorem]{Proposition}
\newtheorem{lemma}[theorem]{Lemma}
\newtheorem{corollary}[theorem]{Corollary}
\newtheorem{conjecture}[theorem]{Conjecture}
\newtheorem{problem}[theorem]{Problem}




\theoremstyle{definition}

\newtheorem{example}[theorem]{Example}

\newtheorem{construction}[theorem]{Construction}




\theoremstyle{remark}

\newtheorem{remark}[theorem]{Remark}


\numberwithin{equation}{section}


\newcommand{\hooklongrightarrow}{\lhook\joinrel\longrightarrow}
\newcommand{\longtwoheadrightarrow}{\relbar\joinrel\twoheadrightarrow}
\newcommand{\N}{\mathbb{N}}
\newcommand{\Z}{\mathbb{Z}}

\newcommand{\R}{\mathbb{R}}

\newcommand{\C}{\mathcal{C}}

\newcommand{\T}{\mathcal{T}}
\newcommand{\Tup}{\mathcal{T}^{\mathrm{up}}}
\newcommand{\TXC}{\mathcal{T}^{\mathrm{XC}}}
\newcommand{\id}{\mathrm{Id}}

\newcommand{\eend}[2]{\mathrm{End}_{#1}(#2)}
\let\hom\relax
\newcommand{\hom}[3]{\mathrm{Hom}_{#1}(#2,#3)}
\renewcommand{\to}{\longrightarrow}

\renewcommand{\SS}{\mathfrak{S}}

\DeclareMathOperator{\sign}{sign}
\newcommand{\XC}{\mathsf{XC}}
\newcommand{\E}{\mathcal{E}}

\DeclareRobustCommand\longtwoheadrightarrow
     {\relbar\joinrel\twoheadrightarrow}

\newcommand{\toiso}{\overset{\cong}{\to}}


\usepackage{accents}
\newcommand{\uhat}{\underaccent{\check}}

\makeatletter
\newcommand{\cupr@tip}{\text{\raisebox{-0.1ex}{$\m@th\hat{}$}}}
\newcommand{\cupr}{\mathbin{\cup\cupr@}}

\newcommand{\cupr@}{%
  \mathchoice
  {\mkern-1.35mu\cupr@tip}
  {\mkern-1.35mu\cupr@tip}
  {\mkern-1.55mu\cupr@tip}
  {\mkern-1.875mu\cupr@tip}
}
\makeatother

\makeatletter
\newcommand{\capr@tip}{\text{\raisebox{0.47ex}{$\m@th\uhat{}$}}}
\newcommand{\capr}{\mathbin{\capr@\cap}}

\newcommand{\capr@}{%
  \mathchoice
  {\mkern11.6mu\capr@tip\mkern-11.6mu}
  {\mkern11.4mu\capr@tip\mkern-11.4mu}
  {\mkern11.1mu\capr@tip\mkern-11.1mu}
  {\mkern10.2mu\capr@tip\mkern-10.2mu}
}
\makeatother

\makeatletter
\newcommand{\capl@tip}{\text{\raisebox{0.47ex}{$\m@th\uhat{}$}}}
\newcommand{\capl}{\mathbin{\capl@\cap}}

\newcommand{\capl@}{%
  \mathchoice
  {\mkern2.1mu\capl@tip\mkern-2.1mu}
  {\mkern2.1mu\capl@tip\mkern-2.1mu}
  {\mkern2.3mu\capl@tip\mkern-2.3mu}
  {\mkern2.1mu\capl@tip\mkern-2.1mu}
}
\makeatother

\makeatletter
\newcommand{\cupl@tip}{\text{\raisebox{-0.1ex}{$\m@th\hat{}$}}}
\newcommand{\cupl}{\mathbin{\cupl@\cup}}

\newcommand{\cupl@}{%
  \mathchoice
  {\mkern1.35mu\cupl@tip\mkern-1.35mu}
  {\mkern1.35mu\cupl@tip\mkern-1.35mu}
  {\mkern1.55mu\cupl@tip\mkern-1.55mu}
  {\mkern1.875mu\cupl@tip\mkern-1.875mu}
}
\makeatother




\DeclareFontFamily{U}{mathx}{}
\DeclareFontShape{U}{mathx}{m}{n}{ <-> mathx10 }{}
\DeclareSymbolFont{mathx}{U}{mathx}{m}{n}
\DeclareFontSubstitution{U}{mathx}{m}{n}
\DeclareMathAccent{\widecheck}{0}{mathx}{"71}

\usetikzlibrary{decorations.markings}
\tikzset{double line with arrow/.style args={#1,#2}{decorate,decoration={markings,%
mark=at position 0 with {\coordinate (ta-base-1) at (0,1pt);
\coordinate (ta-base-2) at (0,-1pt);},
mark=at position 1 with {\draw[#1] (ta-base-1) -- (0,1pt);
\draw[#2] (ta-base-2) -- (0,-1pt);
}}}}






\begin{document}


\title{XC-tangles and universal invariants}

\date{\today}

\author{Jorge Becerra}
\address{Université Bourgogne Europe, CNRS, IMB UMR 5584, F-21000 Dijon, France}
\email{\href{mailto:Jorge.Becerra-Garrido@ube.fr}{Jorge.Becerra-Garrido@ube.fr}}
\urladdr{ \href{https://sites.google.com/view/becerra/}{https://sites.google.com/view/becerra/}} 




\begin{abstract}
We introduce a class of decorated  abstract graphs, that we call XC-tangles,  that provides a very convenient framework to study quantum invariants of tangles and virtual tangles. These can be viewed as a far-reaching generalisation of rotational tangle diagrams for (virtual) upwards tangles, and constitute the topological analogue of XC-algebras, the minimum algebraic structure needed to construct an knot isotopy invariant following the construction of Lawrence and Lee. XC-tangles admit a very natural description in terms of the so-called XC-Gauss diagrams, and this equivalence lifts the well-known equivalence between virtual upwards tangles and upwards Gauss diagrams. For every XC-algebra $A$, there is a naturally defined strict monoidal full functor $Z_A: \TXC \rightarrow v\E(A)$ from the category of XC-tangles to the ``virtual category of elements of $A$''. When $A$ is the endomorphism algebra of a finite-dimensional representation of a ribbon Hopf algebra, this functor can be viewed as an extension of the corresponding Reshetikhin-Turaev functor. Lastly, we also initiate the study of a theory of finite type invariants for one-component XC-tangles that lifts that for virtual long knots.
%
%
%
\end{abstract}

\keywords{XC-tangle, XC-algebra, virtual knots, universal invariant}
\subjclass{16T05, 18M10, 18M15, 57K10, 57K12}


\maketitle

\setcounter{tocdepth}{1}
\tableofcontents


\section{Introduction}

Quantum invariants of knots and links are the cornerstone in the connection between low-dimensional topology and the family of algebraic structures motivated by mathematical physics that we know today as quantum algebra. Concretely, many of the constructions of 3-manifold quantum invariants start by defining a link invariant that only then is promoted to a 3-manifold invariant after some normalisation \cite{RT3mnfd,hennings,virelizier,BBG,CC}.

The starting point in the construction of these quantum knots invariants is very often a ribbon Hopf algebra or a ribbon category. A ribbon structure on a Hopf $\Bbbk$-algebra $A$ is the choice of two invertible elements $R\in A \otimes_{\Bbbk} A$ and $v\in A$ obeying certain axioms that are the algebraic analogues of properties that the positive crossing and the full twist satisfy geometrically. The category $\mathsf{fMod}_A$ of finite-free left modules over a ribbon Hopf algebra $A$ is the prototypical example of a ribbon category; other sources (in fact much abundant) include modules over certain vertex operator algebras \cite{huang1,huang2,DW}.

If $V$ is a finite-free $A$-module, there is a unique strict monoidal functor $RT_V: \T \to \mathsf{fMod}_A$ from the category of oriented, framed tangles in the cube to $\mathsf{fMod}_A$ mapping the monoidal generator of $\T$ to $V$ and preserving the ribbon structure, this is the so-called \textit{Reshetikhin-Turaev functor} \cite{RT,turaev}; in particular it defines a functorial isotopy invariant of tangles. It is sometimes convenient to restrict this functor to the category $\Tup$ of \textit{upwards tangles} in the cube, that is, the monoidal subcategory of $\T$ on tangles without closed components whose strands are all oriented from bottom to top. The whole family $(RT_V)_{V \in \mathsf{fMod}_A}$ of Reshetikhin-Turaev functors associated to representations of $A$ can actually be recovered from a single invariant $\mathfrak{Z}_A$, sometimes called the \textit{universal invariant} associated to $A$ \cite{lawrence,reshetikhin,lee1,lee2,ohtsukibook,habiro}. 
In its simplest form, for a (long) knot $K$, we have  $\mathfrak{Z}_A(T) \in Z(A)$ and it satisfies that $$RT_V(T)= \rho (\mathfrak{Z}_A(T))$$ where $\rho: A \to \eend{\Bbbk}{V}$ is the structure map. This quantity is defined by placing copies of $R$ in the positive  crossings and copies of $\kappa :=uv^{-1}$ (here $u$ is the Drinfeld element) on the negative  full rotations,  we illustrate below the construction for the right-handed trefoil $T_{2,3}$, where we write $R= \sum_i \alpha_i \otimes \beta_i$:

\vspace{0.3cm}\noindent
 \begin{minipage}{.45\textwidth}
 \begin{equation*} 
\labellist \small  \hair 2pt
\pinlabel{$ \color{magenta} \bullet$} at 61 123
\pinlabel{$ \color{magenta} \alpha_i$} at -12 123
\pinlabel{$ \color{magenta} \bullet$} at 138 123
\pinlabel{$ \color{magenta} \beta_i$} at 206 123
\pinlabel{$ \color{blue} \bullet$} at 56 268
\pinlabel{$ \color{blue} \alpha_j$} at -12 268
\pinlabel{$ \color{blue} \bullet$} at 140 268
\pinlabel{$ \color{blue} \beta_j$} at 206 268
\pinlabel{$ \color{OliveGreen} \bullet$} at 56 419
\pinlabel{$ \color{OliveGreen} \alpha_\ell$} at -12 419
\pinlabel{$ \color{OliveGreen} \bullet$} at 135 419
\pinlabel{$ \color{OliveGreen} \beta_\ell$} at 206 419
\pinlabel{$ \color{orange} \bullet$} at 347 313
\pinlabel{$ \color{orange} \kappa$} at 423 325
\endlabellist
\includegraphics[width=0.3\textwidth]{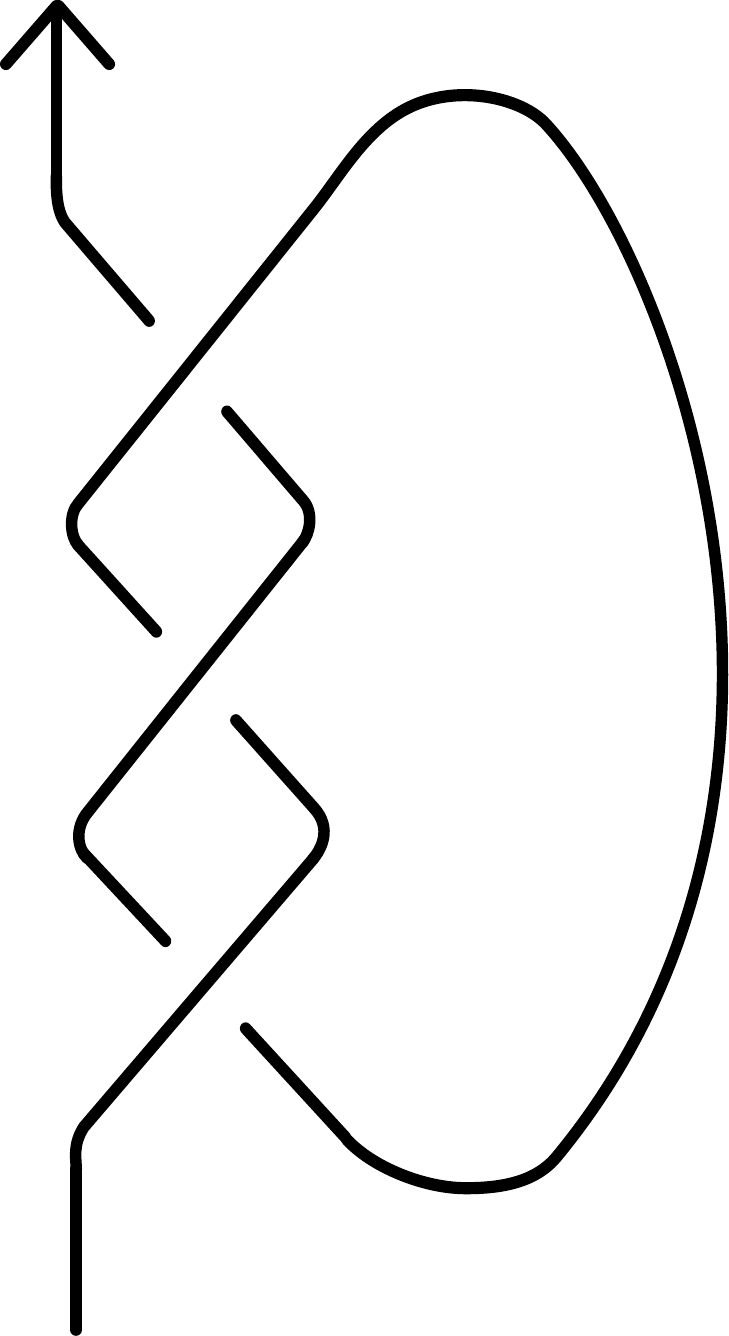} 
\end{equation*}
 \end{minipage}
  \begin{minipage}{.45\textwidth}
 $$\mathfrak{Z}_A(T_{2,3}) = \sum_{\color{magenta}{i} \color{black}{,} \color{blue}{j} \color{black}{,} \color{OliveGreen}{\ell}}\color{OliveGreen}{\beta_\ell}  \  \color{blue}{\alpha_j} \  \color{magenta}{\beta_i} \  \color{orange}{\kappa} \  \color{OliveGreen}{\alpha_\ell}\  \color{blue}{\beta_j}   \  \color{magenta}{\alpha_i} \color{black}{.}
$$
 \end{minipage}
 \vspace{0.3cm}

Observe that, in order to define the universal invariant $\mathfrak{Z}_A$, only the multiplicative structure of the underlying Hopf algebra has been used. This is a reflection of the fact that a much more weaker algebraic structure is needed to define an isotopy invariant of tangles following the construction of $\mathfrak{Z}_A$. \textit{XC-algebras}  are the minimal algebraic structure that produces an invariant in this way \cite{becerra_thesis,BH_reidemeister}. Such a structure even defines a functorial invariant and it is closely related to the Reshetikhin-Turaev functor \cite{becerra_refined}.

The aim of this paper is to introduce a  class of  decorated abstract graphs, that we call \textit{XC-tangles}, that are the geometrical counterpart of XC-algebras and therefore provides a very convenient framework to study quantum invariants. This class of tangles extends in fact the class of virtual, framed, oriented tangles and furthermore admits a theory of finite type invariants that enhances that for virtual tangles. The title of the paper is in fact a nod to Habiro's influential paper \textit{Bottom tangles and universal invariants} \cite{habiro}, where he similarly studied bottom tangles as a convenient class of tangles to study universal invariants.

\subsection{The category \texorpdfstring{$\TXC$}{T^XC} of XC-tangles}

During the last years rotational tangle diagrams have proven to be extremely convenient in the development of quantum knot invariants \cite{barnatanveenpolytime,barnatanveengaussians, barnatanveenAPAI, becerra_gaussians,becerra_thesis, becerra_refined,barnatanveenfast}. These are upwards tangle diagrams where all crossings point upwards and all maxima and minima appear in pairs of the following two forms, 
\begin{equation*}
\centre{
\centering
\includegraphics[width=0.27\textwidth]{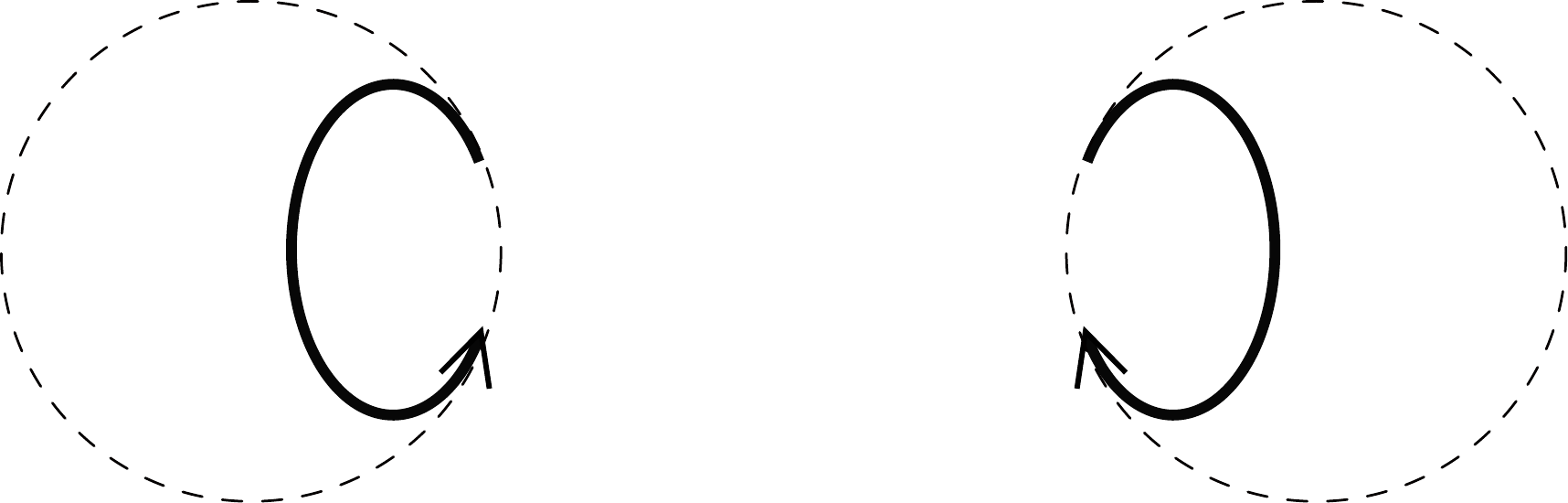}}
\end{equation*}
It immediately follows that any such a diagram can be decomposed as the merging in the plane of copies of the following elementary pieces,
\begin{equation}\label{eq:crossings_and_spinners_INTRO}
\centre{
\labellist \small \hair 2pt
\endlabellist
\centering
\includegraphics[width=0.6\textwidth]{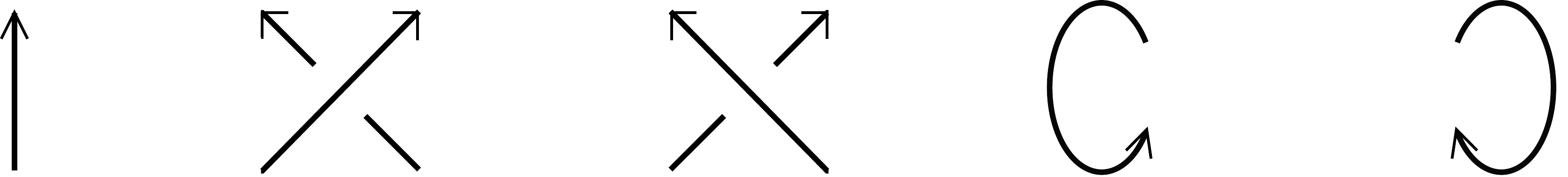}}.
\end{equation}
XC-tangles are abstract graphs that are locally made of the elementary pieces  \eqref{eq:crossings_and_spinners_INTRO}. We do not want to burden the reader with details so for now let us content ourselves with an incomplete definition (see \cref{subsec:XC-tangles} for the complete one). An \textit{XC-tangle diagram} is a directed graph with vertices of valence 1, 2 or 4, with tetravalent vertices  signed and vertex oriented, every edge carrying an integer $-1,0$ or $+1$ called its \textit{rotation number}, and \textit{strands} (maximal directed paths) having as endpoints univalent vertices. An \textit{XC-tangle} is an equivalence class of XC-tangle diagram modulo the so-called ``XC-Reidemeister moves'', namely the XC-analogues of the set of rotational Reidemeister moves for rotational diagrams; several generating sets of rotational Reidemeister moves have been described in \cite{BH_reidemeister}.

The data of an XC-tangle diagram can be equivalently encoded in an \textit{XC-Gauss diagram}. These are an enhanced version of the standard Gauss diagrams where the data of the rotation numbers is also added. More precisely, there is a monoidal isomorphism (\cref{thm:TXC=GDXC})
\begin{equation}\label{eq:TXC=GDXC_intro}
\TXC \overset{\cong}{\to}   \mathcal{GD}^{\mathrm{XC}}
\end{equation}
between the strict monoidal category $\TXC$ of XC-tangles and the category $\mathcal{GD}^{\mathrm{XC}}$ of XC-Gauss diagrams modulo the corresponding analogues of the XC-Reidemeister moves.

As we just mentioned, XC-Gauss diagrams are ordinary Gauss diagrams with extra decorations. This suggests a passage between XC-tangles and the geometrical counterpart of Gauss diagrams, namely virtual framed tangles (these are called rotational virtual in \cite{K99,kauff_rv}, here the name clash is quite unfortunate). In fact XC-tangles and virtual framed upwards tangles are related in two different ways.

\begin{theorem}[\cref{prop:forgetful_XC_virtual} and \cref{thm:vTup->TXC}]
There is a canonical monoidal full functor $$  U: \TXC \longtwoheadrightarrow v\Tup $$ as well as a monoidal embedding $$  I: v\Tup \hooklongrightarrow \TXC  $$ between the category $\TXC$ of XC-tangles and the category $v\Tup$ of virtual upwards tangles, such that $I$ is a section for $U$, that is $U \circ I= \id$. There are analogous functors between the categories of XC-Gauss diagrams and upwards Gauss diagrams, fitting in the following bigger diagram:
$$
\begin{tikzcd}
\TXC \arrow[d,two heads, bend right, "U"' pos=0.53] \rar{\cong} & \mathcal{GD}^{\mathrm{XC}}  \arrow[d,two heads, bend right, "U'"' pos=0.50] \\
v\Tup \dar[hook] \rar{\cong} \arrow[u,hook, bend right, "I"' pos=0.47] & \mathcal{GD}^{\mathrm{up}} \dar[hook] \arrow[u,hook, bend right, "I'"' pos=0.50] \\
v\T \dar[two heads] \rar{\cong} & \mathcal{GD} \dar[two heads] \\
v\T_u \rar{\cong} & \mathcal{GD}_u 
\end{tikzcd}
$$
\end{theorem}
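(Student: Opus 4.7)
The natural approach is to construct all four functors simultaneously by working on the combinatorial side, exploiting \eqref{eq:TXC=GDXC_intro} together with its upwards analogue $v\Tup \cong \mathcal{GD}^{\mathrm{up}}$. The key observation is that an XC-Gauss diagram differs from an upwards Gauss diagram only by one extra decoration: a rotation number in $\{-1,0,+1\}$ attached to each arc. One is thus led to define $U'$ as the forgetful functor dropping this rotation-number data, and $I'$ as the section of $U'$ that tags every arc with rotation number $0$. The functors $U$ and $I$ are then defined by conjugating $U'$ and $I'$ through the two horizontal isomorphisms of the diagram.

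The core of the proof is the verification that $U'$ and $I'$ respect the respective Reidemeister-type relations. For $U'$, one goes through a generating set of XC-Reidemeister moves (for instance the one in \cite{BH_reidemeister}) and observes that each such move either (i) changes only the rotation-number decoration, in which case it descends to the identity on the underlying upwards Gauss diagram, or (ii) modifies the crossing structure, in which case erasing the decorations returns exactly an upwards virtual Reidemeister move. For $I'$, one must conversely check that every upwards virtual Reidemeister move, lifted by tagging all new arcs with rotation number $0$, can be realised as a composite of XC-Reidemeister moves. The lifts of the R2- and R3-type moves are essentially immediate; the main obstacle is the framed R1 move, whose lift has to be reconciled with the rotation-number bookkeeping via the spinner-type XC-moves. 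This is the one genuinely nontrivial verification, and it is local and finite.

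Once $U'$ and $I'$ are known to be well-defined, monoidality and functoriality are immediate, since both operations commute with disjoint union and concatenation of abstract graphs. The identity $U' \circ I' = \id$ holds at the level of diagrams and therefore descends to equivalence classes, giving $U \circ I = \id$; fullness of $U$ (resp.\ $U'$) then follows automatically, and $I$ (resp.\ $I'$) is an embedding because it is a section. Finally, the large commutative diagram is assembled from the two horizontal isomorphisms and the standard correspondence between virtual tangles and Gauss diagrams: its top two squares commute by construction of $U$ and $I$ from $U'$ and $I'$, while the lower squares are the well-known compatibilities between the various categories of virtual tangles and of Gauss diagrams (namely the inclusion of upwards tangles into all tangles, and the projection to $v\T_u$).
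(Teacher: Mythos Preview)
Your construction of $U'$ (forgetting the rotation-number/diamond decoration) is essentially what the paper does, and the arguments for well-definedness, fullness, and monoidality are correct.

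However, your construction of the section $I'$ has a genuine gap. You propose to send an upwards Gauss diagram to the XC-Gauss diagram obtained by tagging every arc with rotation number $0$ (equivalently, adding no diamond decorations). The paper explicitly warns, in the remark following \cref{thm:vTup->TXC}, that this na\"{\i}ve assignment does \emph{not} descend to equivalence classes. The obstruction is precisely the one you flag as ``genuinely nontrivial'' but then dismiss as ``local and finite'': several of the upwards Gauss relations --- notably the analogue of the framed $\Omega 1$ move and the side-swapped $\Omega 2$ moves --- have XC counterparts $(\widehat{G}1f)$, $(\widehat{G}2')$ that \emph{necessarily} involve nonzero diamond decorations. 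Lifting the upwards relations with all-zero decorations therefore produces equations that are not consequences of the XC-Reidemeister moves, so $I'$ as you define it is not well-defined on morphisms.

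The paper's route around this is genuinely different: it works on the tangle side rather than the Gauss side, and crucially invokes \cref{lem:vT_rot_diagrams}, which guarantees that every virtual upwards tangle admits a \emph{rotational} diagram. The embedding $I$ is then defined by reading off rotation numbers from such a rotational representative (assigning $0$ to upwards edges and $\pm 1$ to full rotations), and well-definedness is checked against a generating set of \emph{rotational} Reidemeister moves, each of which maps exactly onto an XC-Reidemeister move. The induced functor $I'$ on the Gauss side therefore inserts diamonds in a pattern dictated by this rotational algorithm; the paper remarks that an intrinsic description of $I'$ purely in terms of Gauss diagrams, bypassing \cref{lem:vT_rot_diagrams}, is not presently known.
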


Let us say some words about the functors appearing in the previous theorem. The functor $U'$ is the canonical functor that forgets the extra decorations keeping track of the rotation numbers of the edges. At the level of tangle diagrams, it translates into the  canonical functor  $U$. The existence of its section $I$ strongly relies on the existence of rotational diagrams for virtual upwards tangles, this runs parallel to the non-virtual case \cite{becerra_refined}. In particular, this means that the category $v\Tup$ can be seen as the category of rotational virtual tangle diagrams modulo the rotational Reidemeister moves from \cite{BH_reidemeister} together with its virtual analogues, these are made explicit in \cref{subsec:vTup}. Most importantly, the rotational Reidemeister moves are mapped under $I$ exactly to their XC-analogues, hence the well-definedness of $I$.

\subsection{A functorial invariant from XC-algebras}

XC-algebras are the minimal algebraic structure needed to obtain a tangle invariant  following the construction of the universal invariant $\mathfrak{Z}_A$ \cite{BH_reidemeister}. They can be defined in any symmetric monoidal category, but for simplicity here let us give the simplest description. Given a $\Bbbk$-algebra $A$, an \textit{XC-structure} on $A$ is the choice of two invertible elements $$R \in A\otimes_\Bbbk A \qquad , \qquad \kappa \in  A  $$ 
satisfying 
\begin{enumerate}[leftmargin=4\parindent, itemsep=2mm]
\item[(XC0)] $R^{\pm 1}=(\kappa \otimes \kappa) \cdot R^{\pm 1} \cdot (\kappa^{-1} \otimes \kappa^{-1})$,
\item[(XC1f)] $\sum_i \beta_i \kappa \alpha_i = \sum_i \alpha_i \kappa^{-1} \beta_i   $ ,
\item[(XC2c)] $  1\otimes \kappa^{-1} =  \sum_{i,j}  \alpha_i \bar{\alpha}_j \otimes \bar{\beta}_j \kappa^{-1} \beta_i    $,
\item[(XC2d)] $\kappa \otimes 1 =   \sum_{i,j}   \bar{\alpha}_i \kappa  \alpha_j \otimes \beta_j \bar{\beta}_i$,
\item[(XC3)] $R_{12}R_{13}R_{23}=R_{23}R_{13}R_{12}$,
\end{enumerate}
where we have put $R = \sum_i \alpha_i \otimes \beta_i$ and $R^{-1} = \sum_i \bar{\alpha}_i \otimes \bar{\beta}_i$. Ribbon Hopf algebras and endomorphisms algebras of representations of these constitute the main examples of XC-algebras.

In \cite{becerra_refined}, it was shown that the construction of the universal invariant using an XC-algebra $A$ gives rise to a full strict monoidal functor
\begin{equation}\label{eq:Z_A_intro}
Z_A: \Tup \to \E(A)  ,
\end{equation}
where $\E(A)$ is the so-called ``category of elements'' of the XC-algebra $A$. This functor in fact arises canonically as $\Tup$ is the ``free open-traced monoidal category generated by a single object'', and $Z_A$ the unique functor mapping the generator of $\Tup$  to the generator of $\E(A)$ and preserving the open-trace structure.  When $A=\mathrm{End}_\Bbbk(V)$ is the endomorphism algebra  of a finite-free module over a ribbon Hopf algebra, then $Z_{\mathrm{End}_\Bbbk(V)}$ essentially amounts to the Reshetikhin-Turaev functor $RT_V$. When the XC-algebra $A$ is in fact traced, then the functor $Z_A$ can be extended to $\T^+$, the free traced category (in the sense Joyal-Street-Verity \cite{joyal_street_traced}) generated by a single object.

In the framework of XC-tangles, there is a canonical analogue of the functor \eqref{eq:Z_A_intro}, which is in fact much more natural and easier to define as $\TXC$ is monoidally generated by a single object, the morphisms \eqref{eq:crossings_and_spinners_INTRO} and a so-called ``external algebra action''; this is the content of \cref{prop:generators_TXC}. This is a desirable feature that in some sense makes $\TXC$ behave as the free strict ribbon category on one object  $\T$. In fact, the following theorem should be understood as the XC-analogue of the Reshetikhin-Turaev theorem:

\begin{theorem}[\cref{thm:Z_A}]\label{thm:Z_A_intro}
Let $A$ be an XC-algebra. Then there exists a unique symmetric monoidal functor $$Z_A: \TXC \to v \E (A)  $$ with target the ``virtual category of elements of $A$'' which is the identity on objects, compatible with the external algebra action and
\begin{alignat*}{3}
Z_A(X) &= (R, (12)) \qquad  &, \qquad Z_A(X^-) &= (R_{21}^{-1}, (12))\\ Z_A(C) &= (\kappa^{-1}, \id) \qquad &, \qquad  Z_A(C^-)  &= (\kappa, \id).
\end{alignat*}
Furthermore, this functor is full.
\end{theorem}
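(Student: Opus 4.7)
The plan is to exploit the generators-and-relations presentation of $\TXC$ supplied by \cref{prop:generators_TXC}: the category is strictly monoidally generated by a single object together with the elementary morphisms $X, X^{-1}, C, C^{\pm 1}$ and an external $A$-algebra action on each strand, modulo the XC-Reidemeister moves and the virtual Reidemeister moves inherited via $U$. Given this, the existence and uniqueness of $Z_A$ will be reduced to checking one identity per relation.

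First I would \emph{define} $Z_A$ on generators: the identity on objects, the four formulas in the statement on $X^{\pm}, C^{\pm}$, and on the external action by $a \in A$ along a strand as left multiplication by $a$ in the corresponding tensor factor of $v\E(A)$. Extending by strict monoidality yields a candidate on all of $\TXC$, and uniqueness is automatic since these data exhaust the generators.

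Second I would \emph{verify} that $Z_A$ respects the defining relations. The generic symmetric-monoidal identities hold tautologically in $v\E(A)$, so the substantive task is to match each XC-Reidemeister move with the corresponding XC-axiom: the compatibility of crossings with spinners translates to (XC0) (in both signs), the twisted Reidemeister I move to (XC1f), the two Reidemeister II moves involving spinners to (XC2c) and (XC2d), and the rotational Reidemeister III move to the Yang--Baxter equation (XC3). The virtual Reidemeister moves and the detour moves are automatic, as $Z_A$ sends each virtual crossing to the symmetry of $v\E(A)$, which always satisfies them. Invertibility of $R$ and $\kappa$ guarantees that the oppositely signed elementary pieces are respected.

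Third, for \emph{fullness}, a morphism $[n] \to [n]$ in $v\E(A)$ is a pair $(a_1 \otimes \cdots \otimes a_n,\sigma) \in A^{\otimes n} \rtimes S_n$. The permutation $\sigma$ is realised by a product of virtual crossings, which is an honest XC-tangle via the section $I$. The tensor $a_1 \otimes \cdots \otimes a_n$ is realised by decorating the $i$-th strand of the identity XC-tangle with the external action by $a_i$. Concatenating these two XC-tangles and applying $Z_A$ recovers the given morphism.

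The main obstacle is the bookkeeping in the second step: one must cross-check the generating rotational Reidemeister moves from \cite{BH_reidemeister} against each variant of the XC-axioms, keeping track of the sign conventions for rotation numbers and of the derived forms (e.g.\ the $R^{-1}$-variant of (XC0) and the conjugate forms of (XC2c)/(XC2d) obtained by inverting $\kappa$). Once these identifications are catalogued, well-definedness reduces to routine verification and the rest of the argument is structural.
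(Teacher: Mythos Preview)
Your overall strategy---define on generators, check relations, argue fullness---matches the paper's, and the matching of XC-Reidemeister moves with (XC0)--(XC3) is correct. However, two related misunderstandings make the argument as written incorrect.

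First, you have misread ``external algebra action''. It is \emph{not} a decoration of strands by elements $a \in A$; $\TXC$ knows nothing about any particular $A$. Rather (see the discussion preceding \cref{prop:generators_TXC}), it is a functor $F_{\TXC}: \mathsf{A} \to \mathsf{Set}$ from the PROP $\mathsf{A}$ for abstract associative algebras, whose effect is to merge and permute strands. ``Compatible with the external algebra action'' means $Z_A$ intertwines these merging/permuting operations on $\TXC$ with the corresponding ones on $v\E(A)$. (Minor related point: $\TXC$ has no ``virtual Reidemeister moves inherited via $U$''---the functor $U$ goes the other way---and the symmetric braiding of $\TXC$ consists of trivial edges with reordered endpoints, not separate virtual-crossing generators.)

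Second, and consequently, your fullness argument proves the wrong statement. By definition $\eend{v\E(A)}{n} = A(\hom{\XC}{0}{n}) \times \SS_n$, not $A^{\otimes n} \rtimes \SS_n$: the first component is the image under the functor $A:\XC \to \C$ of an arrow $0 \to n$ in the PROP $\XC$, i.e.\ an element built from $R^{\pm 1}$, $\kappa^{\pm 1}$, multiplication, unit, and the symmetry---not an arbitrary tensor. Your plan to realise $a_1 \otimes \cdots \otimes a_n$ by ``decorating the $i$-th strand with the external action by $a_i$'' therefore both invokes an operation that does not exist in $\TXC$ and targets a set larger than the actual hom-set. The correct argument (as in the paper) is that any element of $\hom{\XC}{0}{n}$ is a word in $\mu, \eta, R^{\pm 1}, \kappa^{\pm 1}$ and $P$, and such a word is precisely a recipe for assembling an XC-tangle from copies of $X^\pm$, $C^\pm$ and the trivial edge using the external algebra action in its actual sense.
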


In the previous statement, $X^\pm$ and $C^\pm$ refer to the generators of $\TXC$ from \eqref{eq:crossings_and_spinners_INTRO}.  Also, $\mathrm{End}_{v\E(A)} (n) \subset A^{\otimes n} \times \SS_n$, similarly to the category of elements $\E(A)$ from \cite{becerra_refined}. The precise definition makes use of the ``PROP $\mathsf{XC}$ for XC-algebras'', see \cref{subsec:vEA} for details. Composing with the aforementioned embedding $I$, we obtain an invariant of upwards virtual tangles 
\begin{equation}\label{eq:juju}
 v\Tup \to v\E(A)
\end{equation}
(in particular of virtual framed long knots).

The functor from \cref{thm:Z_A_intro} is, in fact, not merely an analogue of \eqref{eq:Z_A_intro}, but a genuine extension of it:

\begin{theorem}[\cref{thm:comparison_Z_As}]
Let $A$ be an XC-algebra.  There is a monoidal embedding $$ \E(A) \hooklongrightarrow v\E(A) $$
making the following diagram commute:
$$
\begin{tikzcd}
\T^{\mathrm{up}}\dar[hook] \rar[two heads]{Z_A} & \E(A) \dar[hook]\\
\T^{\mathrm{XC}}  \rar[two heads]{Z_A} & v\E(A)  
\end{tikzcd}
$$
\end{theorem}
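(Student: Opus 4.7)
The plan is to construct the monoidal embedding $J\colon \E(A)\hookrightarrow v\E(A)$ from the PROP descriptions recalled in \cref{subsec:vEA} and then verify commutativity of the square on a generating set of $\Tup$. Both categories have $\N$ as set of objects, and in both cases an endomorphism of $n$ is presented as a (linear combination of) pair(s) $(\alpha,\sigma)$ with $\alpha \in A^{\otimes n}$ and $\sigma \in \SS_n$. The essential difference is that $v\E(A)$ incorporates the symmetric braiding coming from the virtual crossing in the PROP $\XC$, while $\E(A)$ retains only the generators $R^{\pm 1}$, $\kappa^{\pm 1}$ and the external algebra action.

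First I would define $J$ on objects as the identity and on morphisms as the inclusion sending a pair $(\alpha,\sigma)$ in $\E(A)$ to the same pair in $v\E(A)$. Well-definedness amounts to checking that each defining relation of $\E(A)$ continues to hold in $v\E(A)$: since the relations defining $\E(A)$ are precisely those encoding the XC-axioms of $A$, and the PROP $\XC$ driving $v\E(A)$ is obtained from that PROP by adjoining the symmetry generator together with its compatibility axioms, this is tautological. Faithfulness is then immediate since $J$ is the identity on the underlying pairs, and $J$ is therefore a monoidal embedding.

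For the commutativity of the square I would use that $\Tup$ is monoidally generated by the crossings $X^{\pm}$, the cup/cap pieces and the external algebra action (as discussed in \cite{becerra_refined}), and that the canonical embedding $\Tup\hookrightarrow \TXC$ (obtained by composing $\Tup\hookrightarrow v\Tup\overset{I}{\hookrightarrow}\TXC$) sends these generators to their XC-rotational counterparts with trivial rotation numbers. Both versions of $Z_A$ are then defined on these generators by the same formulas, namely $X\mapsto (R,(12))$, $X^{-}\mapsto(R_{21}^{-1},(12))$, $C^{\pm}\mapsto (\kappa^{\mp 1},\id)$, and the external algebra action is preserved tautologically; hence commutativity on generators follows, and by strict monoidal functoriality it extends to the whole of $\Tup$.

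The main obstacle, as often with arguments of this flavour, is the identification of $\E(A)$ as a sub-PROP of $\XC$, i.e.\ verifying that no new relation between the original generators is forced when one adjoins the virtual crossing. This should however be transparent from the explicit PROP presentations, since $\XC$ differs from the PROP driving $\E(A)$ precisely by the addition of a symmetry generator and its compatibility axioms with $R$ and $\kappa$, none of which impose any constraint among the old generators themselves.
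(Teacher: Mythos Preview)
Your core idea—sending a pair $(\alpha,\sigma)$ to itself—is correct and matches the paper's approach. However, you have misunderstood how $\E(A)$ is defined, and this leads you both to an unnecessary detour and to a false ``obstacle''.

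The category $\E(A)$ is \emph{not} presented via a PROP; it is defined concretely in \cite{becerra_refined} with $\eend{\E(A)}{n} = \mathcal{I}_n$, where $\mathcal{I}_n$ is an explicit monoid that is, by construction, a \emph{subset} of $A(\hom{\XC}{0}{n}) \times \SS_n = \eend{v\E(A)}{n}$. Given this, the embedding is the set-theoretic inclusion of hom-sets: well-definedness and faithfulness are automatic, and one only needs to observe that composition and the monoidal product are given by the same formulas on both sides. Your discussion of ``well-definedness'' as checking that relations of $\E(A)$ continue to hold in $v\E(A)$ is therefore beside the point.

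More seriously, your ``main obstacle''—that adjoining a symmetry generator to some smaller PROP does not force new relations among the old generators—is both unnecessary and incorrectly framed. First, $\XC$ is from the outset a \emph{symmetric} PROP (every PROP includes the symmetric braiding by definition), so nothing is being adjoined. Second, were such a claim actually required, your assertion that it ``should be transparent from the explicit PROP presentations'' would be unjustified: showing that a PROP morphism is injective is in general highly nontrivial (compare the paper's \cref{conj:the_conj}, which is a closely related and \emph{open} question). The paper's argument bypasses all of this by invoking the concrete definition of $\E(A)$.

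Your argument for commutativity via generators is fine and is essentially what the paper means by ``immediate comparing the constructions''. One small correction: the embedding $\Tup \hookrightarrow \TXC$ does \emph{not} send all generators to pieces with trivial rotation numbers—the full rotations, which are among the rotational generators of $\Tup$, are sent to edges with rotation number $\pm 1$ (i.e.\ to $C^{\mp}$).
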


Combining this with \cite[Theorem 5.4]{becerra_refined}, we obtain

\begin{corollary}[\cref{cor:extension_RT}]
If $V$ is a finite-free left module over a ribbon Hopf algebra, then the universal XC-tangle invariant $$ Z_{\mathrm{End}_\Bbbk(V)}: \TXC \to v\E(\mathrm{End}_\Bbbk(V))  $$ extends the Reshetikhin-Turaev invariant $RT_V: \Tup \to \mathsf{fMod}_A^{\mathrm{str}}$ to XC-tangles.
\end{corollary}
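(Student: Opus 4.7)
The statement is a short formal consequence obtained by pasting two diagrams, and my plan is essentially bookkeeping.

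First, I would specialise \cref{thm:comparison_Z_As} to the XC-algebra $A=\mathrm{End}_\Bbbk(V)$, which yields the commutative square
\[
\begin{tikzcd}
\Tup \dar[hook] \rar[two heads]{Z_{\mathrm{End}_\Bbbk(V)}} & \E(\mathrm{End}_\Bbbk(V)) \dar[hook]\\
\TXC  \rar[two heads]{Z_{\mathrm{End}_\Bbbk(V)}} & v\E(\mathrm{End}_\Bbbk(V))
\end{tikzcd}
\]
This realises the restriction of the XC-universal invariant along the canonical embedding $\Tup \hookrightarrow \TXC$ as the upwards-tangle universal invariant $Z_{\mathrm{End}_\Bbbk(V)}: \Tup \to \E(\mathrm{End}_\Bbbk(V))$ of \cite{becerra_refined}, up to the prescribed embedding of categories of elements.

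Next, I would invoke [\cite{becerra_refined}, Theorem 5.4], which identifies the upwards-tangle universal invariant $Z_{\mathrm{End}_\Bbbk(V)}:\Tup\to\E(\mathrm{End}_\Bbbk(V))$ with the Reshetikhin-Turaev functor $RT_V: \Tup \to \mathsf{fMod}_A^{\mathrm{str}}$, under a canonical identification of $\E(\mathrm{End}_\Bbbk(V))$ with (a full subcategory of) $\mathsf{fMod}_A^{\mathrm{str}}$. Pasting this identification onto the right-hand column of the square above provides the desired factorisation of $RT_V$ through $Z_{\mathrm{End}_\Bbbk(V)}:\TXC\to v\E(\mathrm{End}_\Bbbk(V))$, which is exactly what it means for the latter to extend $RT_V$ from $\Tup$ to $\TXC$.

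The only point that requires a moment's thought, and is in that sense the ``main obstacle'', is the compatibility between the vertical embedding $\E(\mathrm{End}_\Bbbk(V)) \hookrightarrow v\E(\mathrm{End}_\Bbbk(V))$ coming from \cref{thm:comparison_Z_As} and the identification of $\E(\mathrm{End}_\Bbbk(V))$ with its image in $\mathsf{fMod}_A^{\mathrm{str}}$ coming from [\cite{becerra_refined}, Theorem 5.4]. Both are built tautologically from the XC-structure $(R,\kappa)$ on $\mathrm{End}_\Bbbk(V)$ (the former via the PROP $\mathsf{XC}$ acting on $A^{\otimes n}\times\SS_n$, the latter via the same generators acting on $V^{\otimes n}$), so this compatibility holds by construction, and no further work is needed beyond citing the two results.
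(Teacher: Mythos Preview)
Your proposal is correct and matches the paper's own argument exactly: the corollary is stated as an immediate consequence of combining \cref{thm:comparison_Z_As} (specialised to $A=\mathrm{End}_\Bbbk(V)$) with \cite[Theorem 5.4]{becerra_refined}, and no further proof is given. Your extra remark on the compatibility of the two embeddings is a reasonable sanity check but is indeed automatic from the constructions.
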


In particular, this implies that when $A=\mathrm{End}_\Bbbk(V)$ then the functor \eqref{eq:juju} extends the Reshetikhin-Turaev functor to upwards virtual tangles. This is closely related to the invariant obtained from the universal property of $v\T$ described in \cite{brochier}, we elaborate on this in \cref{subsec:brochier}. From that it follows that there is a commutative cube of strict monoidal categories and strict monoidal functors as displayed below (\cref{cor:big_diag}):
\begin{equation*}
\begin{tikzcd}
             & v\T \arrow[rr, "\widetilde{RT}_V "]  \arrow[from=dd, hook]      &                                                              & \mathsf{fMod}_\Bbbk^{\mathrm{str}}        \\
v\Tup \arrow[ru, hook] \arrow[rr,pos=0.8, "Z_{\mathrm{End}_\Bbbk(V)}",crossing over]                 &                                             & v\E (\mathrm{End}_\Bbbk(V)) \arrow[ru,  hook,"\iota_V"]         &                                           \\
 & \T \arrow[rr, pos=0.3, "RT_V"] &                                                              & \mathsf{fMod}_A^{\mathrm{str}} \arrow[uu] \\
\Tup \arrow[uu, hook] \arrow[rr, "Z_{\mathrm{End}_\Bbbk(V)}"] \arrow[ru, hook] &                                             & \E (\mathrm{End}_\Bbbk(V)) \arrow[uu, hook,crossing over] \arrow[ru, hook] &                                          
\end{tikzcd}
\end{equation*}

\subsection{Finite type invariants of XC-tangles}

We also lay the foundations of a theory of finite type invariants for one-component XC-tangles (that we call \textit{XC-knots}) that is closely related to that for virtual long knots.

Let us write $\mathcal{K}^{\mathrm{XC}} := \eend{\TXC}{1}$  for the set of XC-knots, and fix $\Bbbk$ a commutative ring with unit. There is a sequence of $\Bbbk$-linear isomorphisms
\begin{equation}\label{eq:sequence_intro}
\Bbbk \mathcal{K}^{\mathrm{XC}} \toiso \Bbbk \mathcal{GD}^{\mathrm{XC}}(\uparrow) \toiso \mathcal{P}^{\mathrm{XC}}
\end{equation}
that arises in the following way: the  isomorphism of \eqref{eq:TXC=GDXC_intro} restricts to a bijection $\mathcal{K}^{\mathrm{XC}} \toiso \mathcal{GD}^{\mathrm{XC}}(\uparrow), $ and this induces the first of the isomorphisms in \eqref{eq:sequence_intro}. Given an XC-Gauss diagram, taking the sum over all its subdiagrams induces the second isomorphism; we call $\mathcal{P}^{\mathrm{XC}}$ the \textit{XC-Polyak algebra} for the analogy with the classical case.

Let $n \geq 0$. A \textit{finite type invariant of degree $n$} for XC-tangles with values in a $\Bbbk$-module $M$ is a $\Bbbk$-module map $$ v:   \mathcal{P}^{\mathrm{XC}} \to M $$ such that $v$ vanishes in XC-Gauss diagrams with at least $n$ decorations.
Composing with  \eqref{eq:sequence_intro}, we get a ``genuine'' XC-knot invariant $v: \Bbbk \mathcal{K}^{\mathrm{XC}} \to M$.

There is a sequence
\begin{equation}\label{eq:maps_V_intro}
\mathcal{V}^{\mathrm{XC}}_n(M) \to v\mathcal{V}_n(M) \to \mathcal{V}_n(M)
\end{equation}
between the sets of finite type invariants of degree $n$ for XC-knots, virtual long knots and long knots. It is a classical result of Goussarov that for  $\Bbbk = \Z$ the second arrow is surjective  \cite[Theorem 3.A]{GPV}. It turns out that the same property applies to the first arrow.

\begin{proposition}[\cref{prop:surjection}]
For any commutative ring $\Bbbk$, we have that the map $\mathcal{V}^{\mathrm{XC}}_n(M) \to v\mathcal{V}_n(M)$ from \eqref{eq:maps_V_intro} is a surjection.
\end{proposition}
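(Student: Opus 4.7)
The plan is to produce, for every $v \in v\mathcal{V}_n(M)$, an explicit XC-lift $\tilde v \in \mathcal{V}^{\mathrm{XC}}_n(M)$ whose restriction along $I$ recovers $v$. Unlike the classical Goussarov surjection $v\mathcal{V}_n(M) \to \mathcal{V}_n(M)$, which relies on subtle geometric realisation arguments, here the existence of the splitting $I$ of $U$ from the main theorem should make the argument essentially formal.

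The candidate lift is obtained by pullback along the forgetful map. From the large commutative square recall the canonical surjection $U' \colon \mathcal{GD}^{\mathrm{XC}} \twoheadrightarrow \mathcal{GD}^{\mathrm{up}}$ that erases edge rotation numbers, together with its section $I'$. Restricting to one-component Gauss diagrams and extending $\Bbbk$-linearly gives
\[
U'_* \colon \Bbbk\mathcal{GD}^{\mathrm{XC}}(\uparrow) \twoheadrightarrow \Bbbk\mathcal{GD}^{\mathrm{up}}(\uparrow), \qquad I'_* \colon \Bbbk\mathcal{GD}^{\mathrm{up}}(\uparrow) \hooklongrightarrow \Bbbk\mathcal{GD}^{\mathrm{XC}}(\uparrow),
\]
still satisfying $U'_* \circ I'_* = \mathrm{id}$.

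The key technical step is to check that $U'_*$ descends to a $\Bbbk$-linear map of Polyak algebras $\mathcal{P}^{\mathrm{XC}} \to \mathcal{P}^{\mathrm{up}}$. Since the isomorphism $\Bbbk\mathcal{GD}^{\mathrm{XC}}(\uparrow) \toiso \mathcal{P}^{\mathrm{XC}}$ is defined by summing over subdiagrams, this reduces to the bookkeeping observation that the subdiagram-summation operation commutes with erasing rotation numbers, because $U'$ only alters edge labels and never touches chords. The induced map preserves the filtration by decoration count: in the number of chord-decorations it is the identity, so an XC-Gauss diagram of filtration degree at least $n$ maps to a virtual Gauss diagram of degree at least $n$. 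Given $v \in v\mathcal{V}_n(M)$, the pullback $\tilde v := v \circ U'_*$ then vanishes on XC-Gauss diagrams with at least $n$ decorations, placing it in $\mathcal{V}^{\mathrm{XC}}_n(M)$. Finally, $\tilde v \circ I'_* = v \circ U'_* \circ I'_* = v$, so $\tilde v$ is the required preimage and surjectivity follows.

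The main obstacle is the first part of the middle step: ensuring compatibility of $U'$ with the Polyak algebra structures, so that the forgetful map really does induce a filtered morphism $\mathcal{P}^{\mathrm{XC}} \to \mathcal{P}^{\mathrm{up}}$. All other ingredients are immediate consequences of the section--retraction relation $U \circ I = \mathrm{id}$ already established earlier in the paper.
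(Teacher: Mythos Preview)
Your overall strategy---produce a retraction of the map $\mathcal{P}_{n+1}\to\mathcal{P}^{\mathrm{XC}}_{n+1}$ and dualise---is exactly the paper's (one-line) argument. But the justification you give at the ``key technical step'' is incorrect, and the filtration claim does not follow from it.

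You assert that the subdiagram-summation $I$ commutes with $U'$ because $U'$ ``only alters edge labels and never touches chords.'' This is false: in the XC world the map $I$ sums over all subsets of decorations, \emph{diamonds included}, so for an XC-Gauss diagram with $d$ diamonds one finds $U'\circ I = 2^{d}\,(I\circ U')$ on that diagram, not equality. Consequently your description of the induced Polyak-level map as ``forget the diamonds'' is wrong, and the stated filtration argument---``it is the identity on chord count, hence degree $\geq n$ goes to degree $\geq n$''---fails outright: an XC-Gauss diagram with $n{+}1$ diamonds and no chords lies in $\mathcal{F}^{\mathrm{XC}}_{n+1}$, but forgetting its diamonds gives the empty diagram, which is certainly not in $\mathcal{F}^{v}_{n+1}$.

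What actually saves the argument is that the Polyak-level map you are implicitly using is \emph{not} ``forget diamonds'': conjugating $U'$ by $I$ gives, for a diagram $D$ with chord set $C$ and nonempty diamond set $\Delta$,
\[
I\bigl(U'(I^{-1}D)\bigr)
=\Bigl(\textstyle\sum_{C'\subset C}(-1)^{|C|-|C'|}D_{C'}\Bigr)
\cdot\Bigl(\textstyle\sum_{\Delta'\subset\Delta}(-1)^{|\Delta|-|\Delta'|}\Bigr)=0,
\]
since the second factor is $(1-1)^{|\Delta|}$. So at the Polyak level $U'_*$ is the ``kill diamonds'' projection: it sends any diagram containing a diamond to $0$ and is the identity on diamond-free diagrams. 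With this description the filtration is genuinely preserved (diagrams in $\mathcal{F}^{\mathrm{XC}}_{n+1}$ either contain a diamond and die, or have $\geq n{+}1$ chords), $U'_*\circ I'_*=\id$ holds, and the splitting follows. You should replace your commutation claim and the ensuing filtration sentence with this computation.
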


As an immediate consequence of this, we obtain the existence of Gauss diagram formulas for finite type invariants of XC-knots (\cref{cor:GD_formulas}).





\subsection*{Acknowledgments} The author is deeply grateful to Roland van der Veen, who introduced the idea of XC-tangles to him and their potential use in the study of universal invariants. The author would also like to thank  Federica Gavazzi, Kevin van Helden, Edwin Kitaeff, Gwénaël Massuyeau, Luis Paris and Lukas Woike  for helpful discussions related to this project.  The author was supported by the ARN project CPJ number ANR-22-CPJ1-0001-0  at the Institut de Mathématiques de Bourgogne (IMB). The IMB receives support from the EIPHI Graduate School (contract ANR-17-EURE-0002).


\section{Classical virtual links and tangles}\label{sec:1}

In this section, we will briefly review  the theory of classical virtual links and their categorification. Some of the constructions that will appear later on in the present article will be elaborations of classical ideas that we make explicit here. All links and tangles will be considered to be oriented.

\subsection{Virtual links: three takes}\label{subsec:virtual_links}

To start off, let us recall virtual links from three different perspectives. The first approach, the closest to usual knots diagrams on the plane, is that of virtual diagrams. For $n \geq 1$, an   \textit{$n$-component virtual link diagram} is an oriented immersion $\amalg_n  S^1 \looparrowright \R^2$ which has only finitely many transversal self-intersections. The set of double points is divided into two subsets of real and virtual crossings. Real crossings are enhanced  with a sign $+$ or $-$ (equivalently, the under/over-pass information). In pictures, virtual crossings are depicted with a small circle to distinguish them from the real crossings.

Similarly to the non-virtual case, we regard virtual link diagrams up to planar isotopy and the set of real, virtual and mixed Reidemeister moves depicted below:

\begin{equation*}
\centre{
\labellist \small \hair 2pt
\pinlabel{$\leftrightsquigarrow$}  at 434 190
\pinlabel{{\scriptsize $\Omega 1$}}  at 440 260
\pinlabel{$\leftrightsquigarrow$}  at 950 190
\pinlabel{{\scriptsize $\Omega 1$}}  at 950 260
\pinlabel{$\leftrightsquigarrow$}  at 2160 190
\pinlabel{{\scriptsize $\Omega 2$}}  at 2160 260
\endlabellist
\centering
\includegraphics[width=0.80\textwidth]{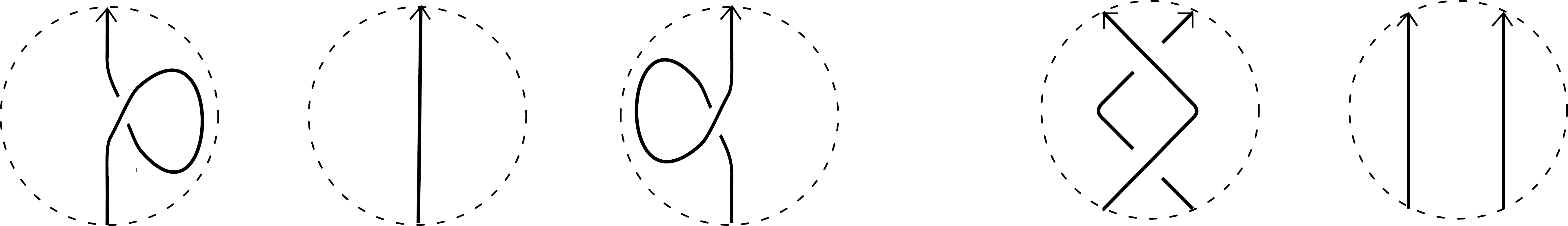}}
\end{equation*}
\begin{equation*}
\centre{
\labellist \small \hair 2pt
\pinlabel{$\leftrightsquigarrow$}  at 434 190
\pinlabel{{\scriptsize $\Omega 3$}}  at 440 260
\pinlabel{$\leftrightsquigarrow$}  at 1645 190
\pinlabel{{\scriptsize $v\Omega 1$}}  at 1645 260
\pinlabel{$\leftrightsquigarrow$}  at 2155 190
\pinlabel{{\scriptsize $v\Omega 1$}}  at 2155 260
\endlabellist
\centering
\includegraphics[width=0.80\textwidth]{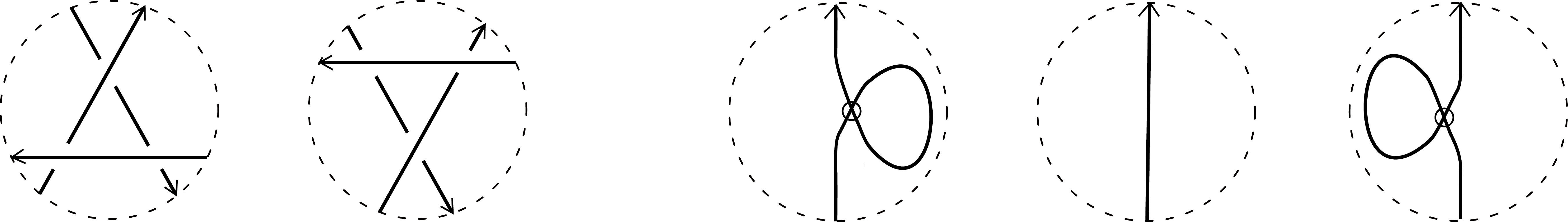}}
\end{equation*}
\begin{equation*}
\centre{
\labellist \small \hair 2pt
\pinlabel{$\leftrightsquigarrow$}  at 434 190
\pinlabel{{\scriptsize $v\Omega 2$}}  at 440 260
\pinlabel{$\leftrightsquigarrow$}  at 1590 190
\pinlabel{{\scriptsize $v\Omega 3$}}  at 1590 260
\pinlabel{$\leftrightsquigarrow$}  at 2720 190
\pinlabel{{\scriptsize $m\Omega 3$}}  at 2720 260
\endlabellist
\centering
\includegraphics[width=0.98\textwidth]{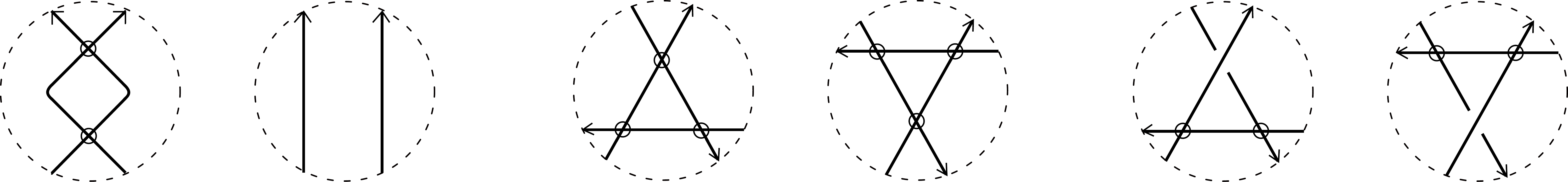}}
\end{equation*}

Each of the depicted equivalences should be understood as identifying two virtual knot diagrams that are identical except in some open neighbourhoods of the diagrams, represented by the dotted circle, where they look as shown. Mimicking the arguments of \cite{polyak10}, it is readily seen that the former is a minimal generating set of Reidemeister moves for virtual links, see also \cite{KMV}. The virtual moves $(v\Omega 1)$ -- $(m\Omega 3)$ can be in fact encoded in the so-called \textit{detour move}, that asserts that a given arc that contains only virtual crossings can be replaced by any other arc on the plane joining the endpoints of the original arc marking the newly produced crossings as virtual.  We write $\mathsf{vL}$ for the set of \textit{virtual links}, that is, the set of oriented virtual link diagrams modulo planar isotopy and the Reidemeister moves above. If $\mathsf{L}$ denotes the set of oriented links in the plane, that is, the set of oriented link diagrams modulo the real Reidemeister moves $(\Omega 1)$--$(\Omega 3)$, then the canonical map 
\begin{equation}\label{eq:L->vL}
\mathsf{L} \hooklongrightarrow \mathsf{vL}
\end{equation}
is an injection \cite{GPV}.

The second approach to virtual links, more topological, is to regard them as isotopy classes of links in thickened surfaces, up to thickened cylinder addition/removal. More precisely, we are going to consider pairs $(\Sigma, L)$ where $\Sigma$ is a closed, oriented  surface, and $L$ is an isotopy class of an  embedding $L: \amalg_n S^1 \hooklongrightarrow \Sigma \times D^1$, where $D^1$ is the one-dimensional disc. We require that the map $\pi_0 (\amalg_n S^1) \to \pi_0(\Sigma \times D^1)$ induced by $L$ is surjective, that is, every path-component of $\Sigma \times D^1$ contains at least one component of $L$. It is clear that we can view $L$ as a link diagram on $\Sigma$, modulo the (real) Reidemeister moves $(\Omega 1)$--$(\Omega 3)$. Given a pair  $(\Sigma, L)$, we can perform a 0-surgery on $\Sigma$ with the embedding $S^0 \times D^2 \hooklongrightarrow \Sigma$ far from the link diagram, in order to obtain a surface $\Sigma '$ whose genus is increased by one. We then obtain a new pair $(\Sigma', L)$ by viewing $L$ inside the new surface, that we call the \textit{stabilisation} of  $(\Sigma, L)$. We write $\mathsf{LTS}$ for the set of pairs $(\Sigma, L)$ modulo the relation that identifies two pairs if they can be connected by a zig-zag of stabilisations.

The third approach to virtual knots is the most combinatorial. A \textit{Gauss diagram} on $\amalg_n  S^1$ is a oriented, finite, univalent graph $D$ such that the set of vertices is embedded in $\amalg_n  S^1$ and each connected component of $D$, called \textit{chord}, carries a sign $\pm$. We identify two Gauss diagrams $D,D'$ in $\amalg_n S^1$ if there is a orientation-preserving homeomorphism of pairs $(\amalg_n  S^1 \cup D, \amalg_n  S^1) \toiso (\amalg_n  S^1 \cup D', \amalg_n  S^1)$ preserving the signs of the chords. The oriented $\amalg_n  S^1$ will be depicted with a thick line whereas the graph $D$ will be drawn with thin arrows. Here is an example of a Gauss diagram in $S^1 \amalg S^1$:
\begin{equation*}
\centre{
\labellist \small \hair 2pt
\pinlabel{\scriptsize  $+$}  at 50 170
\pinlabel{\scriptsize  $-$}  at 200 300
\pinlabel{\scriptsize  $-$} at 433 44
\pinlabel{\scriptsize  $+$}  at 243 116
\pinlabel{\scriptsize  $+$}  at 427 310
\pinlabel{\scriptsize  $-$}  at 665 300
\pinlabel{\scriptsize  $+$}  at 570 210
\endlabellist
\centering
\includegraphics[width=0.35\textwidth]{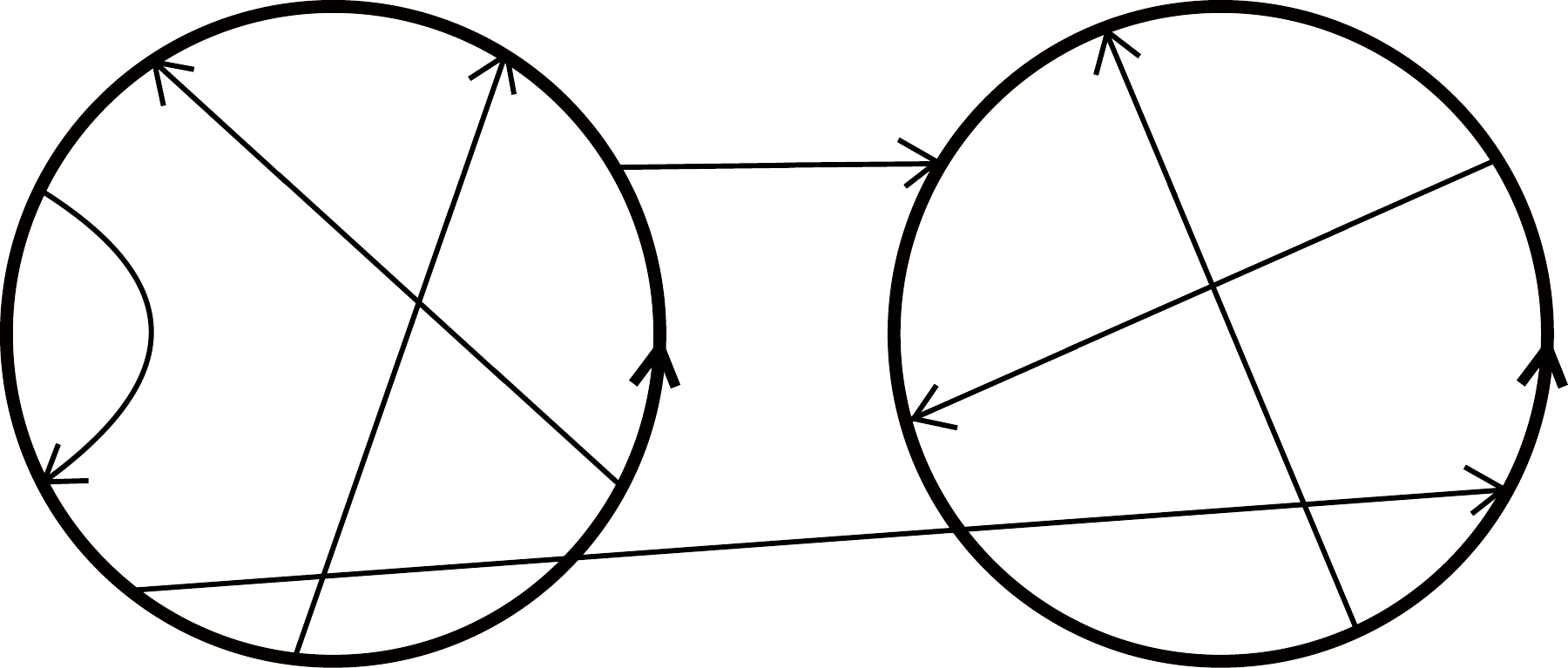}}
\end{equation*}

We write $\mathsf{GD}$ for the set of Gauss diagrams in disjoint unions of oriented circles modulo the equivalence relation generated by the following analogues of the (real) Reidemeister moves:

\begin{equation*}
\centre{
\labellist \small \hair 2pt
\pinlabel{\scriptsize  $+$}  at 392 93
\pinlabel{\scriptsize  $+$}  at 1800 93
\pinlabel{$\leftrightsquigarrow$}  at 600 63
\pinlabel{{\scriptsize $G 1$}}  at 600 120
\pinlabel{$\leftrightsquigarrow$}  at 1294 63
\pinlabel{{\scriptsize $G 1'$}}  at 1294 120
\endlabellist
\centering
\includegraphics[width=0.75\textwidth]{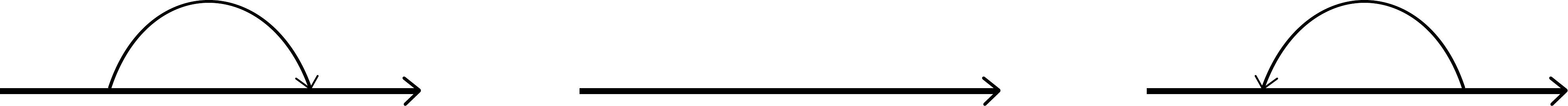}}
\end{equation*}
\begin{equation*}
\centre{
\labellist \small \hair 2pt
\pinlabel{$\leftrightsquigarrow$}  at 625 125
\pinlabel{{\scriptsize $G 2$}}  at 625 185
\pinlabel{\scriptsize  $+$}  at 145 145
\pinlabel{\scriptsize  $-$}  at 370 145
\endlabellist
\centering
\includegraphics[width=0.5\textwidth]{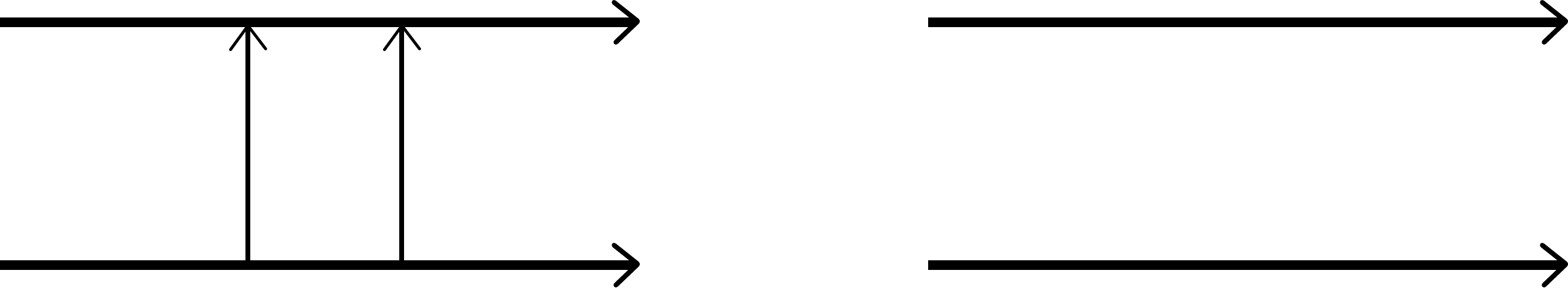}}
\end{equation*}
\begin{equation*}
\centre{
\labellist \small \hair 2pt
\pinlabel{\scriptsize  $-$}  at 120 350
\pinlabel{\scriptsize  $+$}  at 366 343
\pinlabel{\scriptsize  $-$} at 145 86
\pinlabel{$\leftrightsquigarrow$}  at 616 211
\pinlabel{{\scriptsize $G 3$}}  at 616 270
\pinlabel{\scriptsize  $+$}  at 860 348
\pinlabel{\scriptsize  $-$}  at 1125 343
\pinlabel{\scriptsize  $-$} at 1100 90
\endlabellist
\centering
\includegraphics[width=0.5\textwidth]{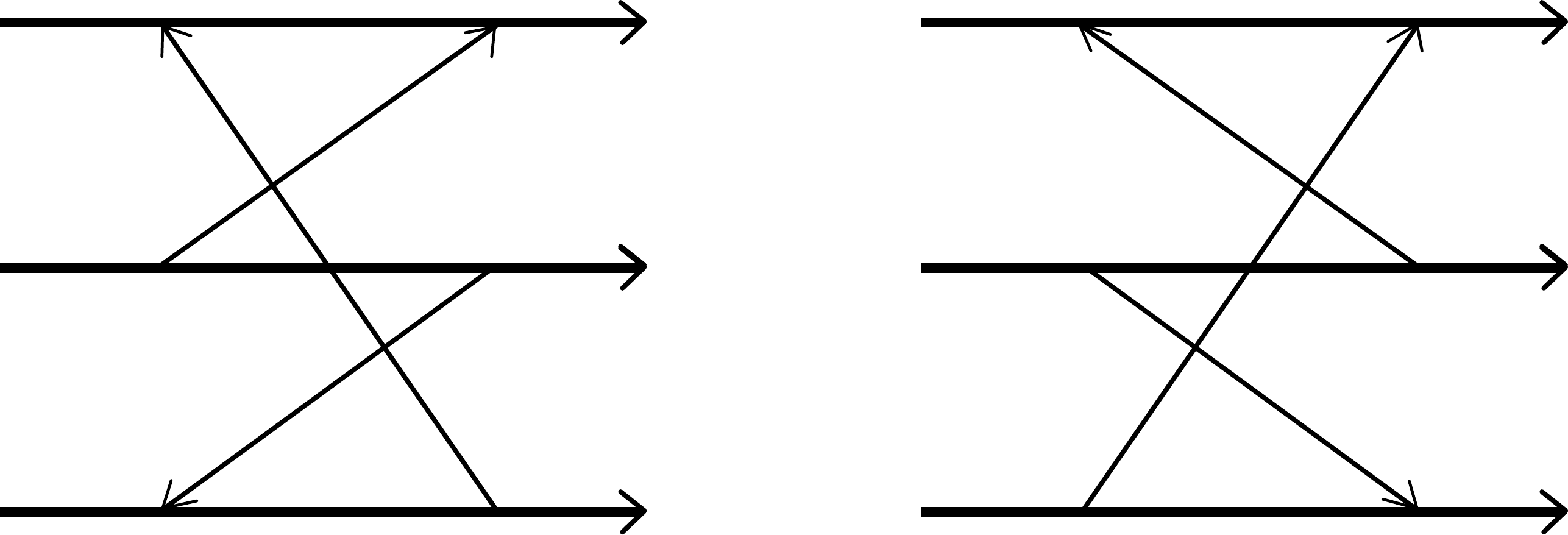}}
\end{equation*}

These moves should be understood as follows: each of the pictures represents a Gauss diagram on $\amalg_k D^1$ where $k=1,2,3$. Let us write $D_L$ and $D_R$ for the Gauss diagram on the left-hand side and right-hand side, respectively.  Then we identify two Gauss diagrams $D,D'$ in $\amalg_n  S^1$ whenever we have a diagram of  orientation- and chord-sign-preserving embeddings of pairs as follows:

\begin{equation}\label{eq:diagram_for_GD}
\begin{tikzcd}[row sep=0.2em]
& (\amalg_k  D^1 , D_L) \rar[hook] & (\amalg_n S^1, D) \\
(\amalg_k D^1, \emptyset) \arrow[hook]{ur}  \arrow[hook]{dr} & &\\
& (\amalg_k  D^1, D_R) \rar[hook] & (\amalg_n S^1, D')
\end{tikzcd}
\end{equation}

More informally, we identify Gauss diagrams that are identical except ``locally'', where they differ as shown.

It turns out that these three sets are different realisations of the same mathematical object:

\begin{theorem}[\cite{K99,CKS,MI}]\label{thm:3_descriptions}
There are bijections 
$$  \mathsf{LTS}  \overset{\cong}{\longleftarrow} \mathsf{vL} \toiso  \mathsf{GD}.$$
\end{theorem}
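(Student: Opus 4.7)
The plan is to construct maps in each direction between the three sets and check that the relations are respected. Since equivalences go in both directions, I would set up four maps, verify well-definedness of each, and then check that the compositions are the identity on representatives.

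First I would construct the map $\mathsf{vL} \to \mathsf{GD}$ by the classical Gauss code recipe. Given an $n$-component virtual link diagram $D: \amalg_n S^1 \looparrowright \R^2$, I take the domain $\amalg_n S^1$ as the underlying circles of the Gauss diagram, mark on them the two preimages of every real crossing, and connect these by a chord oriented from the overpass to the underpass, decorated with the sign of the crossing. Virtual crossings are simply forgotten. This is well-defined on virtual link diagrams up to planar isotopy (the circles come equipped with their orientation only, not an embedding), and one checks by inspection that each of the real Reidemeister moves $(\Omega 1),(\Omega 2),(\Omega 3)$ translates into the corresponding chord move $(G1)$ or $(G1')$, $(G2)$, $(G3)$, while the purely virtual and mixed moves $(v\Omega 1)$--$(m\Omega 3)$ leave the Gauss diagram unchanged (this latter observation is exactly the content of the detour move).

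For the inverse $\mathsf{GD} \to \mathsf{vL}$ I would choose an arbitrary planar embedding of $\amalg_n S^1$ as disjoint standard circles in $\R^2$, and realise each chord by inserting a real crossing of the prescribed sign at its two endpoints; since the connecting arc between the endpoints will generically cross the circles and previously drawn arcs, I declare every such extra intersection to be a virtual crossing. Well-definedness here is the key subtlety: different choices of embedding and of the connecting arcs produce virtual diagrams differing only by isotopies and detour moves, so they represent the same element of $\mathsf{vL}$. That the two maps $\mathsf{vL} \leftrightarrow \mathsf{GD}$ are mutually inverse is then immediate from the constructions, since encoding and re-realizing a diagram recovers it up to detour moves, and vice versa on the Gauss side.

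Next I would produce the bijection $\mathsf{vL} \to \mathsf{LTS}$ via the abstract-knot-diagram construction (Kamada--Kamada). Given a virtual diagram $D$ in $\R^2$, I take a small disc around each real crossing and a thin band around each arc between crossings, glue them along the crossings to form a compact oriented surface $\Sigma_0$ with boundary in which $D$ is embedded as a (genuine, non-virtual) link diagram, and cap off the boundary by discs to obtain a closed oriented surface $\Sigma$. Then $D$ determines a link $L \subset \Sigma \times D^1$ via its over/under data. A real Reidemeister move on $D$ becomes a Reidemeister move on the link in $\Sigma \times D^1$, so isotopy is preserved. A virtual Reidemeister or detour move changes $\Sigma$ only by adding or removing an embedded handle (equivalently a $0$-surgery) disjoint from $L$, which is precisely a stabilization, so the class in $\mathsf{LTS}$ is unchanged. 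For the inverse $\mathsf{LTS} \to \mathsf{vL}$ I would pick a generic projection of $(\Sigma,L)$ to $\R^2$, recording the projection's double points as real or virtual according to whether they come from double points of the link projection or from the surface's self-intersections; stabilizations correspond again to detour moves, so one lands well-defined in $\mathsf{vL}$. Checking that these two maps compose to the identity is direct: re-projecting the abstract surface gives back the original diagram up to detour, and re-thickening a projection recovers the surface up to stabilization.

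The main obstacles will all be well-definedness issues: showing that the realisation $\mathsf{GD} \to \mathsf{vL}$ does not depend on the chosen connecting arcs (reducing every ambiguity to detour moves), and showing that the abstract-knot-diagram construction sends each virtual/mixed Reidemeister move to a stabilization. Both arguments are local, so one checks them case by case on the generating moves listed above; there are no conceptual difficulties, only a verification that requires patience and careful pictures.
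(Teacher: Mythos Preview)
The paper does not actually prove this theorem: it is stated with citations to \cite{K99,CKS,MI} and only the two forward maps are described in the paragraph following the statement. Your sketch is a correct outline of the standard argument found in those references, so there is nothing to object to on mathematical grounds.

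One small point of comparison: for the map $\mathsf{vL}\to\mathsf{LTS}$ the paper's description is slightly different from yours. The paper says: compactify $\R^2$ to $S^2$, and for a diagram with $g$ virtual crossings perform $g$ local $0$-surgeries, one around each virtual crossing, lifting the two arcs over the new handle so they no longer meet. You instead use the Kamada--Kamada abstract link diagram (thicken the underlying graph to a ribbon surface and cap off). Both constructions are standard and produce the same class in $\mathsf{LTS}$; the paper's version makes the genus count and the ``stabilisation $=$ detour'' correspondence more visibly local, while yours has the advantage that the resulting surface is automatically of minimal complexity near the diagram. Either is fine for establishing the bijection.
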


The bijection   $\mathsf{vL} \toiso \mathsf{LTS}$ assigns, for a virtual link with $g$ virtual crossings, a link in the closed, connected, oriented surface $\Sigma_g$  of genus $g$ obtained from $S^2 = \R^2 \cup \{ \infty \}$ by performing $g$ 0-surgeries around every virtual crossing, and the ``lifting'' the virtual crossings to the newly created handles. On the other hand, the map $\mathsf{vL} \toiso \mathsf{GD}$ associates, for every virtual  link with $n$ components, a Gauss diagram as follows: first, parametrise the skeleton $\amalg_n  S^1$ of the Gauss diagram according to the virtual knot. For every real crossing, consider a chord in $\amalg_n S^1$ whose endpoints are determined by the chosen parametrisation, oriented from the overpass to the underpass, and carrying the sign of the crossing. Virtual crossings are completely ignored.

\subsection{Virtual tangles}

Next we would like to explain how to categorify the bijection  $\mathsf{vL} \toiso  \mathsf{GD}$. The author believes that this is well-known to experts but it is rarely found in the literature. We will do this both in the unframed and framed case.

A  \textit{virtual tangle diagram} is an oriented immersion $$D: \left( \coprod_n D^1 \right) \amalg \left( \coprod_m  S^1 \right) \looparrowright D^1 \times D^1$$   which has only finitely many transversal self-intersections and such that the image of $\amalg_n \partial D^1$ lies uniformly distributed on $D^1 \times \partial D^1$. As before, the set of double points is divided into two subsets of real and virtual crossings, with real crossings carrying the under/over-pass information. If $m=0$, we say that the diagram is \textit{open}.

Here is an example of a virtual tangle:
\begin{equation*}
\centre{
\centering
\includegraphics[width=0.20\textwidth]{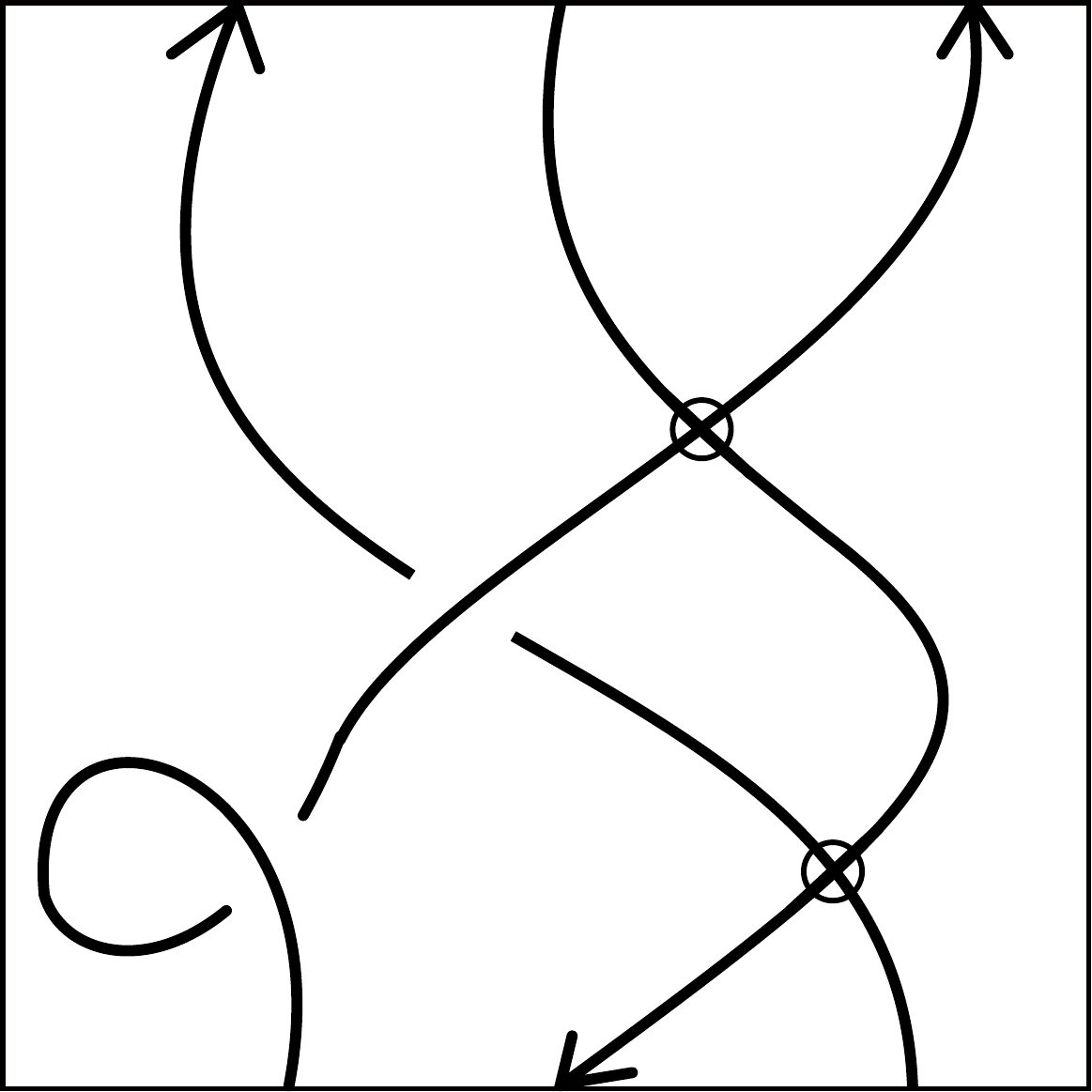}}
\end{equation*}

We assign the symbols $+$ or $-$ to the endpoints of $D$ depending on whether $D$ points upwards or downwards, respectively. Let $\mathrm{Mon}(+,-)$ be the free monoid   on the set $\{ +,- \}$. Given a tangle diagram $D$, let $s(D)$ (resp. $t(D)$) be the element of $\mathrm{Mon}(+,-)$ resulting from reading from left to right the symbols attached in $D^1 \times \{-1\}$ (resp. $D^1 \times \{-1\}$). We call $s(D)$ and $t(D)$ the \textit{source} and \textit{target} of $D$.

Let us write $v\T$ for the strict monoidal category with objects the free monoid $\mathrm{Mon}(+,-)$  on the set $\{ +,- \}$ and morphisms $\hom{v\T}{s}{t}$ the set of virtual tangle diagrams $D$ with $s(D)=s$ and $t(D)=t$ modulo planar isotopy, the Reidemeister moves $(\Omega 2)$, $(\Omega 3)$, $(v\Omega 2)$, $(v\Omega 3)$ and $(m\Omega 3)$ from  \cref{subsec:virtual_links} and additionally the following framed version of the $(\Omega 1)$ and the $(\Omega 2')$ and $(\Omega 3')$ moves below:
\begin{equation*}
\centre{
\labellist \small \hair 2pt
\pinlabel{$\leftrightsquigarrow$}  at 434 190
\pinlabel{{\scriptsize $\Omega 1f$}}  at 440 260
\pinlabel{$\leftrightsquigarrow$}  at 1590 190
\pinlabel{{\scriptsize $\Omega 2'$}}  at 1590 260
\pinlabel{$\leftrightsquigarrow$}  at 2720 190
\pinlabel{{\scriptsize $\Omega 3'$}}  at 2720 260
\endlabellist
\centering
\includegraphics[width=0.98\textwidth]{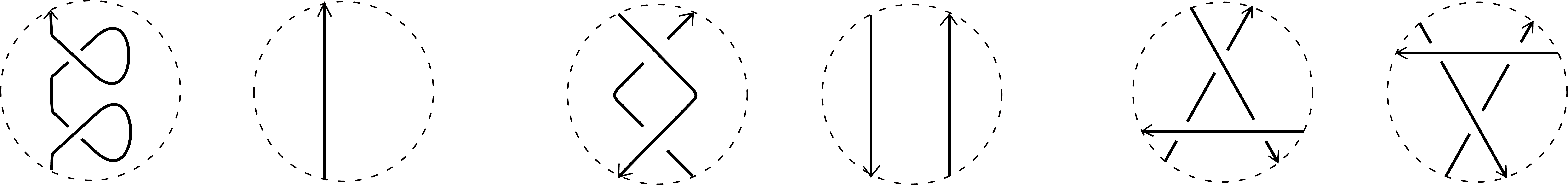}}
\end{equation*}
%
(the two latter moves do indeed need to be added according to the results of \cite[\S 4.2]{BH_reidemeister}).

The composition and monoidal product are given by vertical and horizontal stacking as in the usual tangle category, see e.g. \cite{RT,turaev,becerra_thesis,becerra_refined}. In particular, the unit of the monoidal structure is the empty word $\emptyset$. We call $v\T$ the \textit{framed virtual tangle category} and its arrows \textit{framed virtual tangles} (these are called ``rotational'' in \cite{K99,kauff_rv}). In  \cite{brochier} it is shown that $v\T$ is monoidally equivalent to the category of ``framed oriented tangles in marked surfaces''.

The \textit{unframed virtual tangle category} $v\T_u$ is the quotient category of $v\T$ modulo the unframed real and virtual Reidemeister moves $(\Omega 1)$ and $(v\Omega 1)$; this is equivalent to considering tangle diagrams modulo all the Reidemeister moves from \cref{subsec:virtual_links}. By construction, there is a strict monoidal full functor
\begin{equation}\label{eq:vT->vTu}
v \T \longtwoheadrightarrow v\T_u.
\end{equation}
In fact, this functor fits in a more general commutative diagram, as follows: let us write $\T$ (resp. $\T_u$) for the category of framed, oriented tangles (resp. unframed, oriented tangles) in the cube. Then we have the following commutative diagram of monoidal categories and strict monoidal functors:
\begin{equation} 
\begin{tikzcd}
\T \arrow[d,two heads]  \rar[hook] & v \T  \arrow[d,two heads] \\ \T_u  \rar[hook] & v\T_u
\end{tikzcd}
\end{equation}
All the functors here are the identity on objects. The right-hand side vertical functor is \eqref{eq:vT->vTu}, and the left-hand side one is defined similarly for ordinary tangles; these both are full. On the other hand, the horizontal functors are naturally defined by viewing a tangle diagram only with real crossings as a virtual tangle diagram. Mimicking \cite{GPV}, one can check that these are embeddings.

In fact, both categories $v\T$ and $v\T_u$ admit a symmetric braiding, where $P_{n,m}: n+m \to n+m$ is identical to the usual braiding in the category $\T$ of tangles except that all crossings are now considered virtual. In particular this makes \eqref{eq:vT->vTu} a symmetric strict monoidal functor between strict symmetric monoidal categories. 


Note that $\T_u$ (resp. $v\T_u$) categorifies the set of unframed links (resp. unframed virtual links) in the sense that $$\hom{\T_u}{\emptyset}{\emptyset} \cong \mathsf{L} \qquad , \qquad \hom{v\T_u}{\emptyset}{\emptyset} \cong \mathsf{vL}.  $$
In particular, the bottom horizontal embedding $\T_u \hooklongrightarrow v\T_u$  categorifies the injection \eqref{eq:L->vL}.

For later use, we will introduce another category: the category $v\Tup$ of \textit{virtual upwards tangles} is the monoidal subcategory of $v\T$ on the objects $\mathrm{Mon}(+) \subset \mathrm{Mon}(+,-)$ and arrows virtual tangles without closed components. This is the virtual version of the category $\Tup$ of upwards tangles exploited in \cite{becerra_refined}.

\subsection{Gauss diagrams in polarised 1-manifolds}

We will now  categorify the set of Gauss diagrams on disjoint union of circles. The first step will be to generalise the skeleton to arbitrary 1-manifolds. 

Let $X$ be a compact, oriented 1-manifold (that is, a disjoint union of oriented intervals and circles). A \textit{Gauss diagram} on $X$ is an oriented, finite, univalent graph $D$ such that the set of vertices is embedded in the interior of $X$ and each connected component of $D$, called \textit{chord}, carries a sign $\pm$. As before, we identify two Gauss diagrams $D,D'$ in $X$ if there is an orientation-preserving homeomorphism of pairs $(X\cup D, X) \toiso (X \cup D', X)$ which also preserves  the signs of the chords.

\begin{example}

The following is a Gauss diagram on the disjoint union of two oriented intervals:
\begin{equation*}
\centre{
\labellist \scriptsize \hair 2pt
\pinlabel{ $  +$}  at 115 243
\pinlabel{$  +$}  at 340 288
\pinlabel{$  -$}  at 330 95
\pinlabel{$  -$}  at 100 112
\endlabellist
\centering
\includegraphics[width=0.25\textwidth]{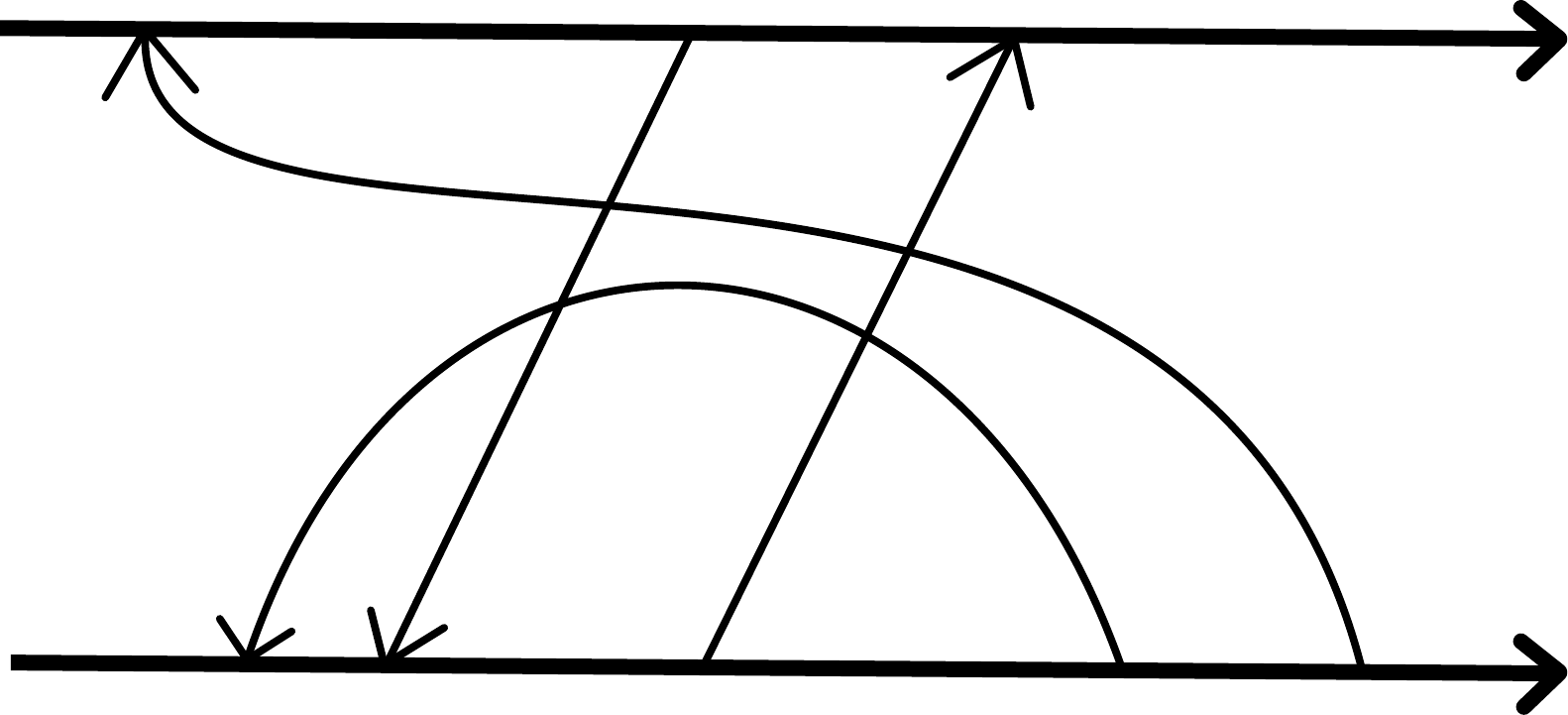}}
\end{equation*}
\end{example}

We will write $\mathcal{GD}(X)$ for the set of Gauss diagrams on $X$, modulo the relations $(G2)$ and $(G3)$ from \cref{subsec:virtual_links} and additionally the  analogues of the moves $(\Omega 1 f)$, $(\Omega 2')$ and $(\Omega 3')$ below 
\begin{equation*}
\centre{
\labellist \small \hair 2pt
\pinlabel{\scriptsize  $+$}  at 50 93
\pinlabel{\scriptsize  $-$}  at 470 93
\pinlabel{$\leftrightsquigarrow$}  at 600 63
\pinlabel{{\scriptsize $G 1f$}}  at 600 120
\endlabellist
\centering
\includegraphics[width=0.5\textwidth]{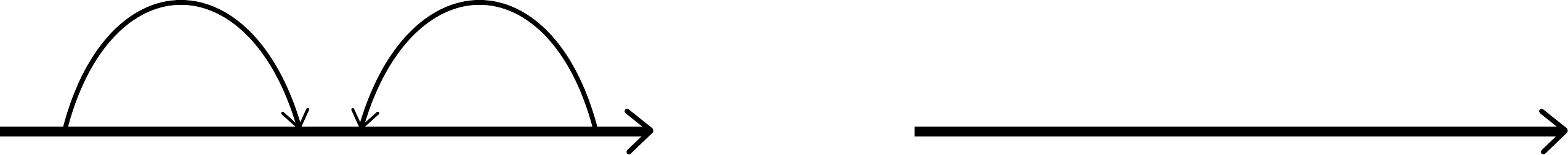}}
\end{equation*}
\begin{equation*}
\centre{
\labellist \small \hair 2pt
\pinlabel{$\leftrightsquigarrow$}  at 625 125
\pinlabel{{\scriptsize $G 2'$}}  at 625 185
\pinlabel{\scriptsize  $+$}  at 193 70
\pinlabel{\scriptsize  $-$}  at 322 70
\endlabellist
\centering
\includegraphics[width=0.5\textwidth]{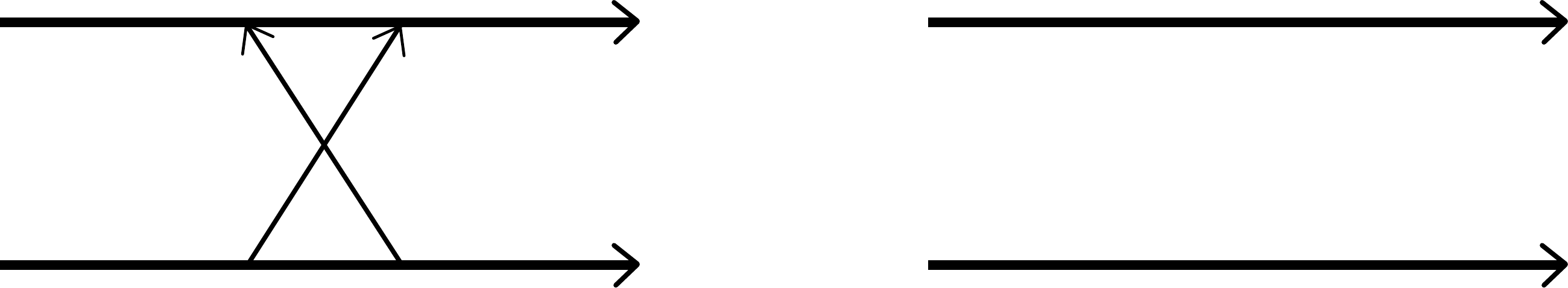}}
\end{equation*}
\begin{equation*}
\centre{
\labellist \small \hair 2pt
\pinlabel{\scriptsize  $+$}  at 120 350
\pinlabel{\scriptsize  $+$}  at 366 343
\pinlabel{\scriptsize  $-$} at 145 86
\pinlabel{$\leftrightsquigarrow$}  at 616 211
\pinlabel{{\scriptsize $G 3'$}}  at 616 270
\pinlabel{\scriptsize  $+$}  at 860 348
\pinlabel{\scriptsize  $+$}  at 1125 343
\pinlabel{\scriptsize  $-$} at 1100 90
\endlabellist
\centering
\includegraphics[width=0.5\textwidth]{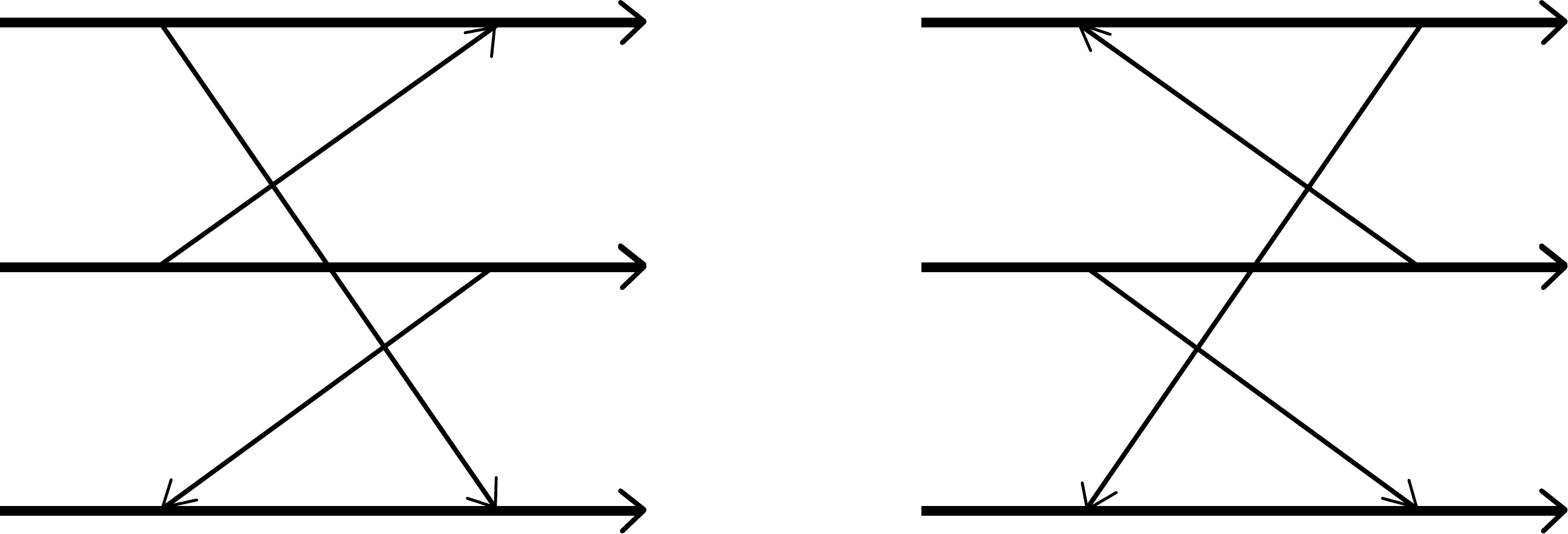}}
\end{equation*}
Similarly, we write  $\mathcal{GD}_u(X)$ for the quotient of $\mathcal{GD}(X)$ modulo the relation $(G1)$ from  \cref{subsec:virtual_links}; equivalently, this is the set of Gauss diagrams on $X$ modulo the relations $(G1)$ -- $(G3)$ from \cref{subsec:virtual_links}.

To make the connection with the categories $v\T$ and $v \T_u$ precise, we need the notion of polarised 1-manifold, that was introduced in \cite{habiromassuyeau} in the framework of the Kontsevich invariant. Say that a compact, oriented 1-manifold is \textit{polarised} if  the oriented 0-manifold $\partial X $ comes with a splitting   $\partial X = \partial_{bot} X \amalg \partial_{top} X$ and each part is endowed with a total order. For a polarised 1-manifold $X$, define the \textit{target} $t(X) \in \mathrm{Mon}(+,-)$ of $X$ as the element resulting from $\partial_{top} X$ by replacing every positive (res. negative) point by $+$ (resp. $-$) according to the total order. Define the \textit{source} $s(X) \in \mathrm{Mon}(+,-)$ of $X$ similarly using $\partial_{bot} X$ but replacing every positive (res. negative) point by $-$ (resp. $+$).

\begin{example}
Every virtual tangle gives rise to a polarised structure on the   1-manifold  $\left( \coprod_n D^1 \right) \coprod \left( \coprod_m  S^1 \right) $ in a natural way. 
\end{example}

We can now define the category $\mathcal{GD}$ of \textit{Gauss diagrams on polarised 1-manifolds} as follows: its objects are elements of $\mathrm{Mon}(+,-)$, and the hom-sets are given by 
\begin{equation}\label{eq:def_homGD}
\hom{\mathcal{GD}}{s}{t} := \coprod_{X} \mathcal{GD} (X),
\end{equation}
where $X$ runs through homeomorphism classes of polarised 1-manifolds such that $s(X)= s$ and $t(X)=t$. The composite law is defined as $$(X_2,D_2) \circ (X_1, D_1) := (X_1 \cup_{t(X_1)=s(X_2)} X_2, D_1 \amalg D_2)$$ and the identity of $w \in \mathrm{Mon}(+,-)$ is the empty chord diagram on the polarised 1-manifold $\uparrow_w$, the disjoint union of oriented intervals with orientations determined by $w$. The monoid product of $\mathrm{Mon}(+,-)$ and the disjoint union of chord diagrams determine a structure of strict monoidal category on $\mathcal{GD}$. The category $\mathcal{GD}_u$ is defined in an analogous way using the sets $\mathcal{GD}_u(X)$. Observe that this categorifies the set of Gauss diagrams on disjoint union of circles, that is, $$ \hom{\mathcal{GD}_u}{\emptyset}{\emptyset} \cong \mathsf{GD}  .$$ Note also that there is a canonical strict monoidal full functor $\mathcal{GD} \twoheadrightarrow \mathcal{GD}_u$.

In analogy to what we did for tangles, we will highlight a certain subcategory of Gauss diagrams. The category $\mathcal{GD}^{\mathrm{up}}$ of \textit{upwards Gauss diagrams} is the monoidal subcategory of $\mathcal{GD}$ on the objects $\mathrm{Mon}(+) \subset \mathrm{Mon}(+,-)$ and arrows Gauss diagrams on polarised 1-manifolds without closed components.

The following theorem  is proven in a similar fashion to the link case in \cref{thm:3_descriptions}. 

\begin{theorem}\label{thm:3_equivalences}
We have the following commutative diagram of strict monoidal categories and strict monoidal functors, where the horizontal functors are in fact  isomorphisms:
$$
\begin{tikzcd}
v\Tup \dar[hook] \rar{\cong} & \mathcal{GD}^{\mathrm{up}} \dar[hook] \\
v\T \dar[two heads] \rar{\cong} & \mathcal{GD} \dar[two heads] \\
v\T_u \rar{\cong} & \mathcal{GD}_u 
\end{tikzcd}
$$
\end{theorem}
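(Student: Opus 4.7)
The plan is to proceed in three stages: first construct the horizontal functors $\Phi: v\T \to \mathcal{GD}$ together with its analogues at the upwards and unframed levels, then construct inverses, and finally verify commutativity of the square.

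For the first stage, I would define $\Phi$ on objects as the identity on $\mathrm{Mon}(+,-)$ and on morphisms as follows. Given a virtual tangle diagram $D$ in $D^1 \times D^1$, its underlying immersed 1-manifold $X_D = (\coprod_n D^1) \amalg (\coprod_m S^1)$ inherits a canonical polarised structure from the parametrisation of $\partial (D^1 \times D^1)$. For every real crossing of $D$ one places a chord in $X_D$ connecting the two preimages of the crossing, oriented from overpass to underpass and carrying the sign of the crossing; virtual crossings are ignored. One must check that $\Phi$ is invariant under planar isotopy (immediate) and under each Reidemeister move: the real moves $(\Omega 1f)$, $(\Omega 2)$, $(\Omega 2')$, $(\Omega 3)$, $(\Omega 3')$ produce precisely the corresponding $G$-moves on Gauss diagrams, while the virtual moves $(v\Omega 1)$--$(v\Omega 3)$ and the mixed move $(m\Omega 3)$ do not change the chord structure or signs at all, hence are automatically respected. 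Moreover, $\Phi$ is manifestly strict monoidal (horizontal stacking corresponds to disjoint union of polarised 1-manifolds and chords). The unframed version descends by sending $(\Omega 1)$ to $(G1)$ and the upwards version restricts cleanly because virtual tangles without closed components have skeleton without closed components.

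For the inverse, given a Gauss diagram $(X, D)$ on a polarised 1-manifold one chooses a generic immersion $X \looparrowright D^1 \times D^1$ compatible with the polarisation and places a real crossing at each chord endpoint pair (with the prescribed sign and the over/underpass determined by the chord orientation), marking all remaining double points as virtual. Independence of the choice of immersion is the key point: any two such immersions differ by an ambient isotopy together with virtual crossing exchanges, which produce only virtual Reidemeister and detour moves between the resulting virtual tangle diagrams. In particular the detour move, encoded in $(v\Omega 1)$--$(v\Omega 3)$ and $(m\Omega 3)$, lets one freely reroute arcs through regions containing only virtual crossings, so the resulting class in $v\T$ is well-defined. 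Invariance under the $G$-moves then matches the real Reidemeister moves $(\Omega 1f)$--$(\Omega 3')$ term-by-term in each local picture. This gives a strict monoidal functor $\Psi: \mathcal{GD} \to v\T$ and similarly at the upwards and unframed levels.

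It is clear from the construction that $\Phi \circ \Psi = \id$ on the nose: the chord data extracted from the virtual diagram produced by $\Psi$ is exactly the original Gauss diagram. For $\Psi \circ \Phi = \id$, given a virtual tangle diagram $D$ one realises $X_D$ by the immersion underlying $D$ itself, so the class of $\Psi(\Phi(D))$ in $v\T$ is that of $D$ up to relocating the virtual crossings, which is achieved via the detour move. Commutativity of the displayed diagram is then automatic: the vertical maps on the left (inclusion $v\Tup \hookrightarrow v\T$, quotient $v\T \twoheadrightarrow v\T_u$) are sent by $\Phi$ to the corresponding maps on the right (inclusion $\mathcal{GD}^{\mathrm{up}} \hookrightarrow \mathcal{GD}$, quotient $\mathcal{GD} \twoheadrightarrow \mathcal{GD}_u$), since the skeleton of an upwards tangle is a disjoint union of oriented intervals and the $(\Omega 1)$ versus $(G 1)$ correspondence was already used to define the unframed horizontal isomorphism.

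The main technical obstacle is the well-definedness of $\Psi$, namely showing that two immersions of the same skeleton realising the same chord data produce virtual tangle diagrams equivalent modulo the virtual and mixed Reidemeister moves; this is precisely the combinatorial content of the detour move, and handling it cleanly for arbitrary polarised 1-manifolds (as opposed to just disjoint unions of circles in the link case of \cref{thm:3_descriptions}) is where the boundary conditions and polarisation play their role.
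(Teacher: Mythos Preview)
Your proposal is correct and is essentially the approach the paper has in mind: the paper does not actually prove this theorem but simply asserts that it ``is proven in a similar fashion to the link case in \cref{thm:3_descriptions}'' (which is itself cited from the literature). Your sketch spells out exactly what that adaptation entails, with the detour move doing the work of showing the inverse $\Psi$ is well-defined, and the polarisation handling the boundary data that is absent in the closed-link case.
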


We would like to remark that, in analogy to the knot and link case, each of the isomorphisms in the previous theorem is closely related to the theory of finite type invariants of virtual tangles, see also \cref{sec:fti}. More generally, it is possible to adapt the arguments of \cite{polyak00,ohtsukibook} to define a universal finite type invariant of framed tangles $$ Z: v\T \to \hat{\vec{\mathcal{A}}} ,$$ where $\hat{\vec{\mathcal{A}}} $ stands for the ``degree-completion of the category of oriented acyclic Jacobi diagrams on polarised 1-manifolds'', which is the virtual analogue of the Kontsevich invariant.

\section{XC-tangles and their Gauss diagrams}

The three isomorphisms from \cref{thm:3_equivalences} express that virtual crossings of virtual tangles are actually not there and that they are a deficiency of the geometrical description requiring to view the virtual tangle on the plane. In particular they tell us that taking a combinatorial approach, like that of Gauss diagrams, is more natural to the study of virtual tangles. However with Gauss diagrams we lose sight of the topological shape of the tangle, that sometimes is good to keep in mind.

The approach that we develop here will reconcile these two sides. This is heavily motivated by the construction of quantum knots invariants, where not only one pays attention to the crossings but also to their ``relative position'', in the sense that the winding or rotation number of an edge merging two crossings is an important piece of data to take into account.

For the rest of the paper, we will focus on open tangles, that is, tangles that have no closed components. It is possible to incorporate closed components to our descriptions, but it adds some complications to our definitions that we prefer not to deal with. The reader is however asked to be mindful that, unlike the non-virtual case, the theories of virtual knots and virtual long knots (i.e. one-component virtual tangles) differ, see \cite{GPV}.

\subsection{XC-tangles}\label{subsec:XC-tangles}

We start by introducing the central notion of this paper, namely that of an XC-tangle. Essentially, this is a refined version of the  ``snarl diagrams'' introduced by Bar-Natan and van der Veen in \cite{barnatanveenpolytime}, see also \cite{barnatanveengaussians}.

An \textit{(ordered, framed, oriented, open) XC-tangle diagram} is a directed  graph $T$ with vertices of valence 1, 2 or 4 such that
\begin{enumerate}[label=(\textit{\roman*})]
\item each tetravalent vertex is signed (i.e., carries a sign $+$ or $-$) and  \textit{vertex oriented}, that is, equipped with a cyclic order of the incident half-edges, such that two adjacent half-edges are incoming and two outgoing,
\item every edge is labelled by an integer $-1,0$ or $+1$, called its \textit{rotation number},
\item every bivalent vertex has one incoming half-edge and one outcoming half-edge, 
\item each maximal directed path going straight at the tetravalent vertices (that we call \textit{strand}) starts and finishes at univalent vertices,
\item each of the sets  $V_{out}$ and $ V_{in}  $  of univalent vertices adjacent to outgoing or incoming half-edges  is endowed with a total order.
\end{enumerate}

We will write $\mathcal{S}(T)$ for the set of strands of $T$. We will endow this set with the total order induced by $V_{out}$. Note also that considering the endpoints of each strand gives rise to a preferred bijection $V_{out} \toiso V_{in}  $.

%
%

Even though a XC-tangle diagram is an abstract graph, it will be convenient to make  pictures. As customary in knot theory, tetravalent vertices with the sign $+$ (resp. $-$) will be depicted as a positive (resp. negative) crossing: 
\begin{equation}\label{eq:pos_neg_crossing}
\centre{
\labellist \small \hair 2pt
\endlabellist
\centering
\includegraphics[width=0.18\textwidth]{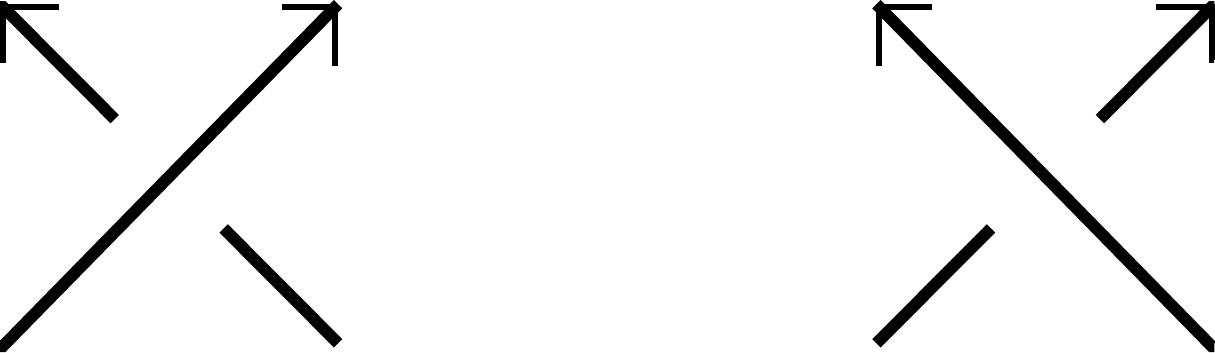}}
\end{equation}
The rotation numbers of each edge are labelled $[-1]$, $[0]$ or $[1]$. In general, we omit labelling the rotation number zero in edges. Edges with rotation numbers $+1$ and $-1$ will be sometimes depicted as a full positive and negative rotation, respectively:
\begin{equation}\label{eq:rotations}
\centre{
\labellist \small \hair 2pt
\endlabellist
\centering
\includegraphics[width=0.18\textwidth]{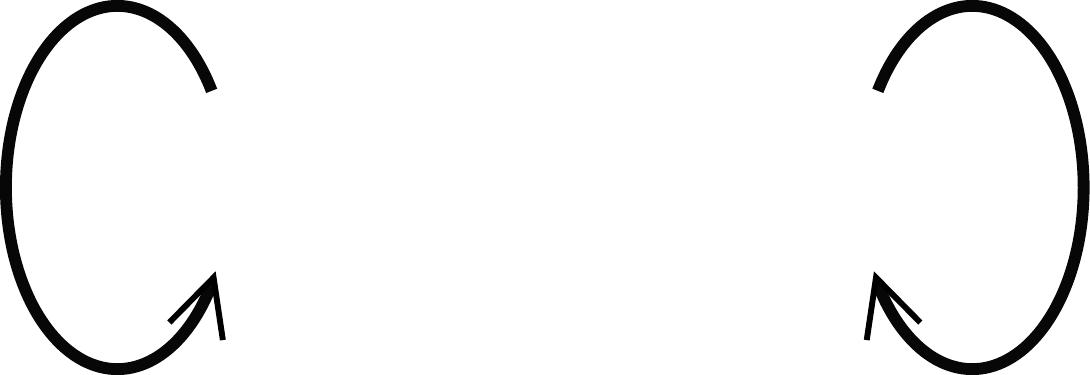}}
\end{equation}
If $p$ consecutive edges of rotation number $+1$ or $-1$ appear, we will sometimes depict a single edge labelled with $[p]$ or $[-p]$, respectively.

\begin{example}\label{ex:XC_tangle}
This is a picture of a (planar) XC-tangle diagram with three strands:
\begin{equation*}
\centre{
\labellist \small \hair 2pt
\pinlabel{\scriptsize  $1$}  at 396 406
\pinlabel{\scriptsize  $3$}  at 28 211
\pinlabel{\scriptsize  $2$}  at 200 116
\pinlabel{\scriptsize  $3$}  at 411 246
\pinlabel{\scriptsize  $1$}  at 260 750
\pinlabel{\scriptsize  $2$}  at 30 750
\pinlabel{\scriptsize  $[-2]$}  at -75 490
\pinlabel{\scriptsize  $[3]$}  at 253 633
\endlabellist
\centering
\includegraphics[width=0.25\textwidth]{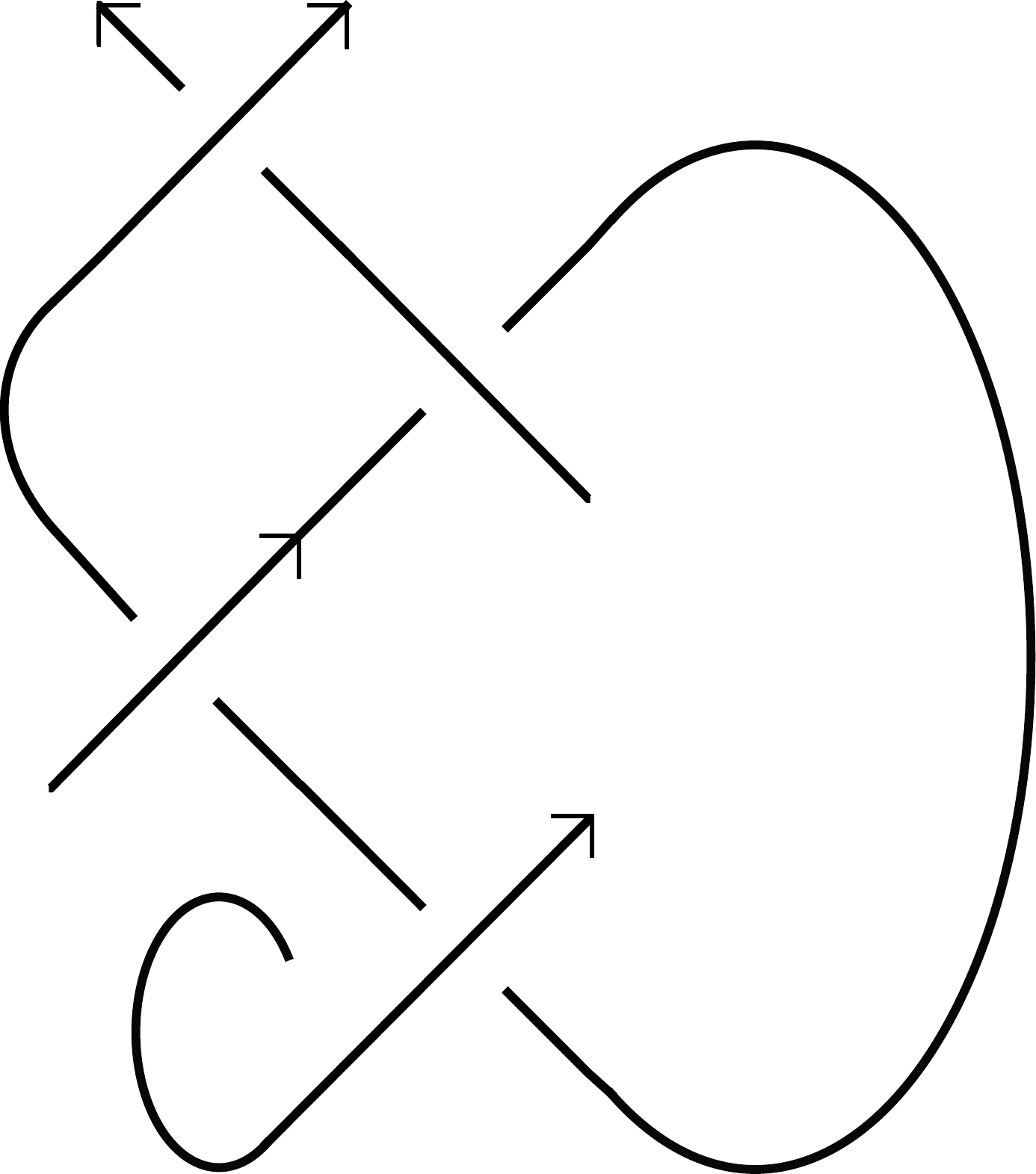}}
\end{equation*}
The numbering next to the outgoing univalent vertices denotes  the ordering of $V_{out}$ (and therefore also of the set of strands), whereas the numbering next to the incoming univalent vertices indicates the ordering of $V_{in}$. 
\end{example}

An \textit{XC-tangle} is an equivalence class of an XC-tangle diagram under the equivalence relation generated by the following XC-analogues of the (real)  framed Reidemeister moves:

\begin{equation*}
\centre{
\labellist \small \hair 2pt
\pinlabel{$,$}  at 1320 390
\pinlabel{$\overset{\widehat{\Omega} 0a}{\leftrightsquigarrow}$}  at 800 440
\pinlabel{$\overset{\widehat{\Omega} 0b}{\leftrightsquigarrow}$}  at 2300 440
\endlabellist
\centering
\includegraphics[width=0.75\textwidth]{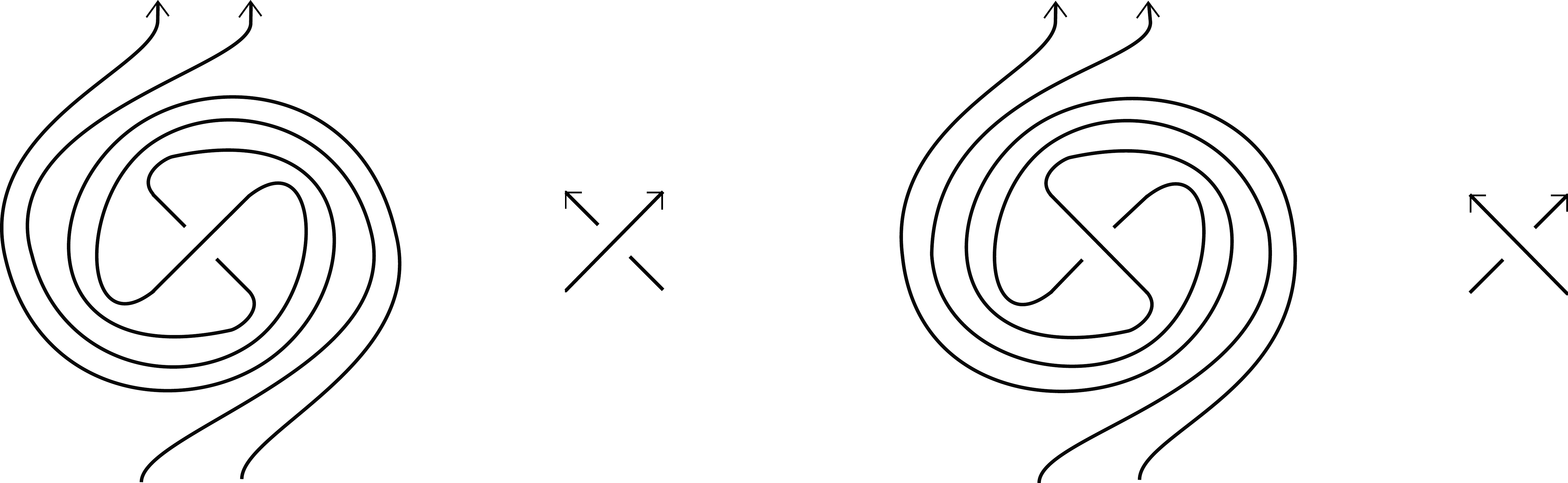}}
\end{equation*}
\vspace*{10pt}
\begin{equation*}
\centre{
\labellist \small \hair 2pt
\pinlabel{$\overset{\widehat{\Omega} 0r}{\leftrightsquigarrow}$}  at 187 230
\pinlabel{$\overset{\widehat{\Omega} 2a}{\leftrightsquigarrow}$}  at 2367 230
\pinlabel{$\overset{\widehat{\Omega} 2b}{\leftrightsquigarrow}$}  at 2800 230
\pinlabel{$\overset{\widehat{\Omega} 1f}{\leftrightsquigarrow}$}  at 1240 230
\pinlabel{\tiny $[i]$}  at 60 104
\pinlabel{\tiny $[j]$}  at 60 300
\pinlabel{\tiny $[i+j]$}  at 490 300
\pinlabel{$,$}  at 580 170
\pinlabel{$,$}  at 1870 170
\endlabellist
\centering
\includegraphics[width=0.98\textwidth]{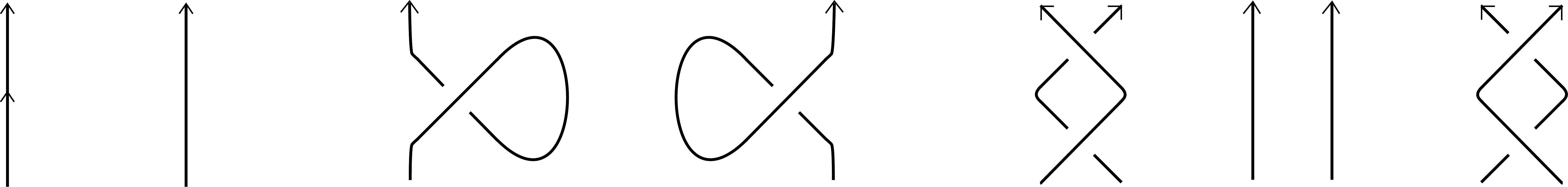}}
\end{equation*}
\vspace*{10pt}
\begin{equation*}
\centre{
\labellist \small \hair 2pt
\pinlabel{$\overset{\widehat{\Omega} 2c}{\leftrightsquigarrow}$}  at 400 300
\pinlabel{$\overset{\widehat{\Omega} 2d}{\leftrightsquigarrow}$}  at 1617 300
\pinlabel{$\overset{\widehat{\Omega} 3}{\leftrightsquigarrow}$}  at 2930 300
\pinlabel{$,$}  at 1030 275
\pinlabel{$,$}  at 2260 275
\endlabellist
\centering
\includegraphics[width=0.98\textwidth]{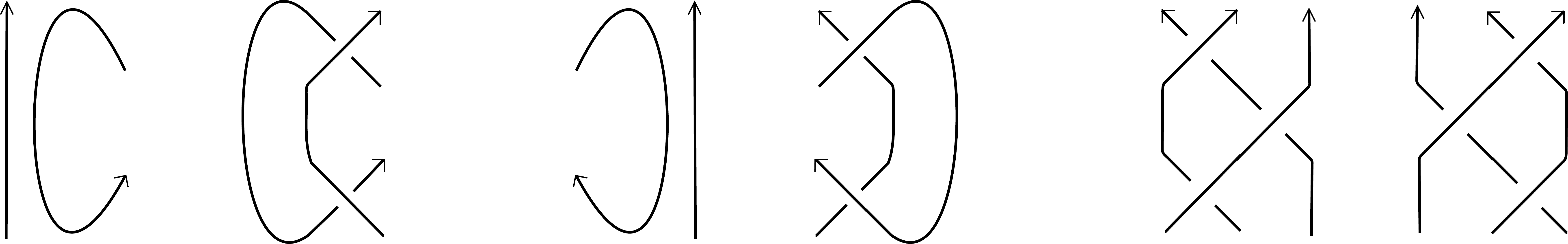}}
\end{equation*}
%
%
\vspace*{5pt}

\noindent Let us explain precisely what we mean. To begin with, each of the previous pictures represents a certain (decorated) subgraph of a given tangle. We identify two XC-tangles that are identical except in a certain subgraph, where they are as shown.  In  $(\widehat{\Omega} 0r)$, we have $i,j=-1,0,1$ (and when $(i,j)=(1,1),(-1,-1)$ this is to be understood as notational).

We ask the reader to be mindful that despite these pictures are planar, no planarity is assumed in XC-tangles: these should be thought of as local pictures of abstract graphs, that are depicted according to the above conventions.

The collection of XC-tangles can be assembled into a strict symmetric monoidal category $\T^{\mathrm{XC}}$ as follows: its objects are the nonnegative integers, and  $\operatorname{End}_{\TXC}(n)$ is the set of XC-tangles with $n$ strands, with no arrows between different objects. The composite $ T_2 \circ T_1 $ of two XC-tangles with $n$ strands is given by glueing the incoming univalent vertices of $T_1$ to the outgoing univalent vertices of $T_2$ in an orderly fashion. The identity of the object $n$ is the ordered disjoint union of $n$ oriented intervals, each of them with rotation number zero, with the $i$-th strand going from the $i$-th outcoming vertex to the $i$-th incoming vertex.

The monoidal product is given by addition of integers for objects,  and by the disjoint union of graphs for the arrows. The sets in/out of univalent vertices are ordered according to the ordinal sum, $$ V_{out}(T \amalg T')= V_{out}(T ) \amalg V_{out}(T') \qquad , \qquad V_{in}(T \amalg T')= V_{in}(T ) \amalg V_{in}(T') $$
so that $x < y$ whenever $x \in V_{out}(T ) $ and $y \in V_{out}(T')$ (or $x \in V_{in}(T ) $ and $y \in V_{in}(T')$). The unit of the monoidal structure is the object 0. The symmetric braiding $P_{n,m}: n+m \to n+m$ is the XC-tangle which consists of $n+m$ oriented intervals where the $i$-th strand connects the $i$-th outgoing vertex with the $(i+n)$-th incoming vertex (mod $n+m$).

\subsection{XC-Gauss diagrams}\label{sec:XC-Gauss}

Next we would like to describe the XC-analogue of Gauss diagrams. Essentially this idea already appeared in \cite[\S 4.5]{becerra_thesis} under the name ``rotational Gauss diagram''.

Here is the definition: let $X$ be a compact, oriented 1-manifold without closed components (that is, a disjoint union of oriented intervals). An \textit{XC-Gauss diagram} on $X$ is  a Gauss diagram $D$ on $X$ together with a finite marking $P$ in $\mathrm{int}(X)- \partial D$, where each of the marked points carries a sign $+$ or $-$. We  will identify two XC-Gauss diagrams $(D, P), (D', P ')$ on $X$ if there is an orientation-preserving homeomorphism of pairs $(X\cup D, X) \toiso (X \cup D', X)$ which also preserves  the signs of the chords, the marking and the signs of the marked points.

In pictures, we will decorate with the symbol $\blacklozenge$ each of the marked points on $\mathrm{int}(X)- \partial D$. If $p$ consecutive markings with the same sign appear in a given path-component of $\mathrm{int}(X) - \partial D$, we will sometimes write a single decoration $\blacklozenge$ with the label $[p]$ or $[-p]$, depending on the sign.

\begin{example}

The following is an XC-Gauss diagram on the disjoint union of two oriented intervals:
\begin{equation*}
\centre{
\labellist \scriptsize \hair 2pt
\pinlabel{ $  +$}  at 115 243
\pinlabel{$  +$}  at 340 288
\pinlabel{$  -$}  at 330 95
\pinlabel{$  -$}  at 100 112
\pinlabel{\normalsize $  \blacklozenge$}  at 441 24
\pinlabel{\normalsize $  \blacklozenge$}  at 193 332
\pinlabel{\normalsize $  \blacklozenge$}  at 626 330
\pinlabel{$  +$}  at 441 -40
\pinlabel{$  +$}  at 193 400
\pinlabel{\scriptsize $  [3]$}  at 626 390
\endlabellist
\centering
\includegraphics[width=0.25\textwidth]{figures/ex_gauss_diag}}
\end{equation*}
\end{example}

We will write $\mathcal{GD}^{\mathrm{XC}}(X)$ for the set of Gauss diagrams on $X$, modulo the equivalence relation generated by the following analogues of the XC-Reidemeister moves, with $\varepsilon=+,-$:
\vspace*{7pt}
\begin{equation*}
\centre{
\labellist \small \hair 2pt
\pinlabel{\scriptsize $\varepsilon$}  at 300 150
\pinlabel{\scriptsize $\varepsilon$}  at 1000 150
\pinlabel{\normalsize $  \blacklozenge$}  at 835 19
\pinlabel{\normalsize $  \blacklozenge$}  at 835 213
\pinlabel{\normalsize $  \blacklozenge$}  at 1080 19
\pinlabel{\normalsize $  \blacklozenge$}  at 1080 213
\pinlabel{\scriptsize $+$}  at 835 -50
\pinlabel{\scriptsize $+$}  at 835 290
\pinlabel{\scriptsize $-$}  at 1080 -50
\pinlabel{\scriptsize $-$}  at 1080 290
\pinlabel{$,$}  at 1423 107
\pinlabel{$\overset{\widehat{G} 0}{\leftrightsquigarrow}$}  at 600 150
\pinlabel{\normalsize $  \blacklozenge$}  at 1790 106
\pinlabel{\normalsize $  \blacklozenge$}  at 1945 106
\pinlabel{\normalsize $  \blacklozenge$}  at 2644 106
\pinlabel{$\overset{\widehat{G} 0r}{\leftrightsquigarrow}$}  at 2252 140
\pinlabel{\scriptsize $[i]$}  at 1790 180
\pinlabel{\scriptsize $[j]$}  at 1945 180
\pinlabel{\scriptsize $[i+j]$}  at 2644 180
\endlabellist
\centering
\includegraphics[width=0.98\textwidth]{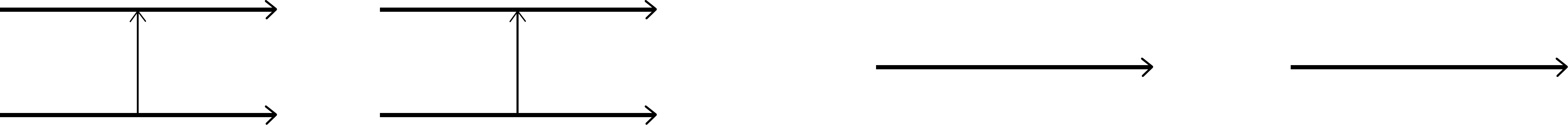}}
\end{equation*}
\vspace*{15pt}
\begin{equation*}
\centre{
\labellist \small \hair 2pt
\pinlabel{\normalsize $  \blacklozenge$}  at 256 20
\pinlabel{\normalsize $  \blacklozenge$}  at 944 20
\pinlabel{\scriptsize $-$}  at 256 -50
\pinlabel{\scriptsize $+$}  at 370 120
\pinlabel{\scriptsize $+$}  at 1057 120
\pinlabel{\scriptsize $+$}  at 944 -50
\pinlabel{\scriptsize $\varepsilon$}  at 1720 160
\pinlabel{\scriptsize $-\varepsilon$}  at 1990 160
\pinlabel{$\overset{\widehat{G} 1f}{\leftrightsquigarrow}$}  at 590 50
\pinlabel{$\overset{\widehat{G} 2}{\leftrightsquigarrow}$}  at 2210 130
\pinlabel{$,$}  at 1423 107
\endlabellist
\centering
\includegraphics[width=0.98\textwidth]{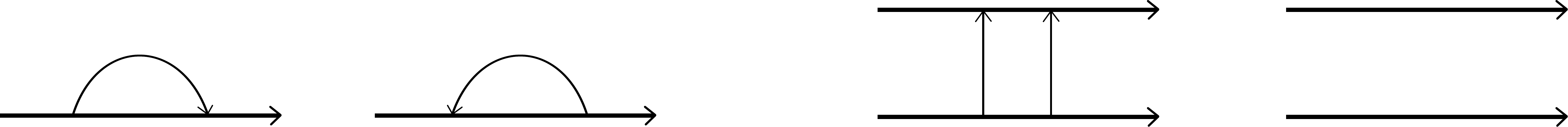}}
\end{equation*}
\vspace*{15pt}
\begin{equation*}
\centre{
\labellist \small \hair 2pt
\pinlabel{$,$}  at 1423 187
\pinlabel{\normalsize $  \blacklozenge$}  at 222 117
\pinlabel{\normalsize $  \blacklozenge$}  at 928 117
\pinlabel{\scriptsize $\varepsilon$}  at 222 50
\pinlabel{\scriptsize $\varepsilon$}  at 928 50
\pinlabel{\scriptsize $\varepsilon$}  at 817 257
\pinlabel{\scriptsize $-\varepsilon$}  at 1030 257
\pinlabel{$\overset{\widehat{G} 2'}{\leftrightsquigarrow}$}  at 590 240
\pinlabel{$\overset{\widehat{G} 3}{\leftrightsquigarrow}$}  at 2217 240
\pinlabel{\scriptsize $+$}  at 1650 140
\pinlabel{\scriptsize $+$}  at 1800 340
\pinlabel{\scriptsize $+$}  at 2080 340
\pinlabel{\scriptsize $+$}  at 2360 340
\pinlabel{\scriptsize $+$}  at 2650 340
\pinlabel{\scriptsize $+$}  at 2810 140
\endlabellist
\centering
\includegraphics[width=0.98\textwidth]{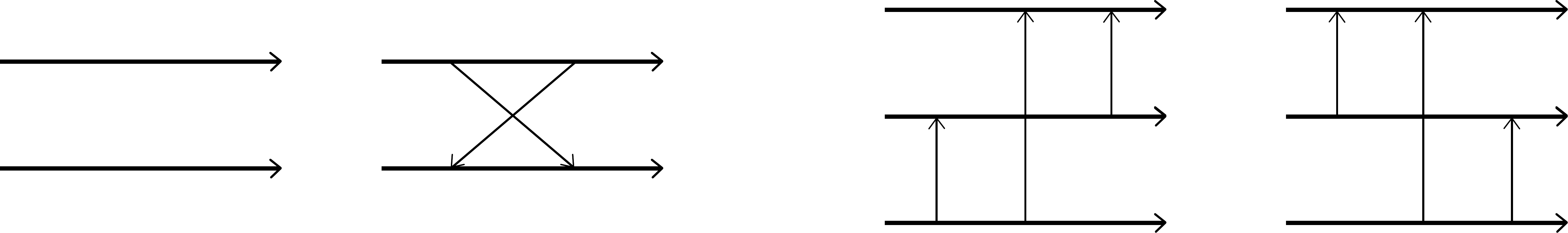}}
\end{equation*}

These relations should be understood in the same way as in  \cref{subsec:virtual_links}, that is, analogously to \eqref{eq:diagram_for_GD} we should have a diagram
\begin{equation}
\begin{tikzcd}[row sep=0.2em]
& (\amalg_k  D^1 , D_L) \rar[hook] & (X, D) \\
(\amalg_k D^1, \emptyset) \arrow[hook]{ur}  \arrow[hook]{dr} & &\\
& (\amalg_k  D^1, D_R) \rar[hook] & (X, D')
\end{tikzcd}
\end{equation}
where the embeddings should of course preserve the marking on $X$ and their signs.

We can now define the category $\mathcal{GD}^{\mathrm{XC}}$ of \textit{XC-Gauss diagrams on polarised open 1-manifolds} as follows: its objects are nonnegative integers, the category has only endomorphisms and 
\begin{equation}\label{eq:def_homGDXC}
\eend{\mathcal{GD}^{\mathrm{XC}}}{n} := \coprod_{X} \mathcal{GD}^{\mathrm{XC}} (X),
\end{equation}
where $X$ runs through homeomorphism classes of polarised open 1-manifolds such that $s(X)= t(X)=+^n$. The composite law is defined as $$(X_2,D_2,P_2) \circ (X_1, D_1,P_1) := (X_1 \cup_{t(X_1)=s(X_2)} X_2, D_1 \amalg D_2, P_1 \amalg P_2)$$ and the identity of the object $n$ is the empty chord diagram on the polarised 1-manifold $\uparrow_{+^n}$ (without diamond decorations). We can furthermore endow $\mathcal{GD}^{\mathrm{XC}}$ with a strict symmetric  monoidal structure: the monoidal product is simply given by addition of integers and disjoint union of XC-Gauss diagrams, and the symmetric braiding  $P_{n,m}: n+m \to n+m$ is the XC-Gauss diagram given by $n+m$ oriented intervals with no chords and no diamond decorations which consists of $n+m$ oriented intervals, which is polarised in the following way: every oriented interval has as initial point the $i$-th bottom endpoint and as final point the  $(i+n)$-th top endpoint (mod $n+m$).

\begin{construction}\label{constr:XCT->XCGD}
We will now associate, to every XC-tangle, a XC-Gauss diagram on a certain polarised open 1-manifold.

Let $T$ be an XC-tangle with $n$ strands. The skeleton $X$ of its XC-Gauss diagram will be a disjoint union of $n$ oriented intervals. We polarise $X$ as follows: first, we declare all positive (resp. negative) points of $\partial X$ to be in $\partial_{top}X$ (resp. $\partial_{bot}X$). Then, choose an arbitrary bijection $\mathcal{S}(T) \toiso \pi_0(X)$ and  set on $\pi_0(X)$ the total order induced by it. Identifying bottom points of $X$ with outcoming univalent vertices of $T$ and top points with incoming vertices according to this bijection induces further bijections
$$ V_{out} \toiso  \partial_{bot}X \qquad , \qquad V_{in} \toiso  \partial_{top}X, $$ and we set on $\partial_{bot}X$ and $\partial_{top}X$ the total order induced by them.

Next we attach as many chords as tetravalent vertices in $T$, with their corresponding sign. Using the preferred bijection $\mathcal{S}(T) \toiso \pi_0(X)$, we divide each path-component of $X$ in as many subintervals as edges the corresponding  strand of $T$ has. Then, for every tetravalent vertex in $T$, we place a chord with the same sign between the corresponding points on $X$ oriented from the overpass to the underpass (according to \eqref{eq:pos_neg_crossing}). Moreover, we place a diamond decoration on $X$ for every edge on $T$ with rotation number $\pm 1$, carrying the same sign.

It is clear that the construction is independent of the choice of the preferred identification $\mathcal{S}(T) \toiso \pi_0(X)$, since any other will give rise to a homeomorphism of pairs $(X\cup D, X) \toiso (X \cup D, X)$ permuting the path-components and preserving all the structure data. More importantly, under this construction the XC-Reidemeister moves are taken precisely to their analogues in terms of XC-Gauss diagrams, so the passage descents to equivalence classes.
\end{construction}

\begin{theorem}\label{thm:TXC=GDXC}
The previous construction defines an isomorphism of strict symmetric monoidal categories $$ \TXC \overset{\cong}{\to}   \mathcal{GD}^{\mathrm{XC}}.$$
\end{theorem}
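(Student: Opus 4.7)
The plan is to produce an explicit inverse functor $\Phi: \mathcal{GD}^{\mathrm{XC}} \to \TXC$ and check that it is strict symmetric monoidal, that it is well-defined on equivalence classes, and that it is mutually inverse to the functor $\Psi: \TXC \to \mathcal{GD}^{\mathrm{XC}}$ built in \cref{constr:XCT->XCGD}.

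First I would describe $\Phi$ on representatives. Given an XC-Gauss diagram $(X, D, P)$ on a polarised open $1$-manifold $X$ with $s(X)=t(X)=+^n$, I build an XC-tangle diagram with $n$ strands as follows: each path-component of $X$ becomes a strand (ordered via $\partial_{bot} X$, which gives the ordering of $V_{out}$, and by tracking each strand to its top endpoint we get the ordering of $V_{in}$). Each chord of $D$ with sign $\varepsilon$, whose endpoints lie on specified points of $X$ oriented from overpass to underpass, is replaced by a tetravalent vertex with sign $\varepsilon$ and vertex orientation dictated by the chord and strand orientations according to \eqref{eq:pos_neg_crossing}. Each diamond in $P$ with sign $\varepsilon$ becomes a bivalent vertex and the adjacent edges carry rotation number $\varepsilon$; all remaining edges carry rotation number $0$. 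Forgetting the bivalent vertices this is literally the standard ``recover a tangle diagram from its Gauss diagram'' construction enhanced by rotation data.

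Next I would verify well-definedness and the inverse property. On the nose, $\Phi \circ \Psi$ and $\Psi \circ \Phi$ agree with the identity on representatives modulo the relation $(\widehat{\Omega}0r)$ on the tangle side (which allows one to replace a chain of rotation-number edges by a single one carrying the total rotation) and the corresponding relation $(\widehat{G}0r)$ on the Gauss side. To see that $\Psi$ and $\Phi$ descend to the respective equivalence classes it suffices to match moves one-by-one: the local inputs of $(\widehat{\Omega}0a), (\widehat{\Omega}0b), (\widehat{\Omega}0r), (\widehat{\Omega}1f), (\widehat{\Omega}2a), (\widehat{\Omega}2b), (\widehat{\Omega}2c), (\widehat{\Omega}2d), (\widehat{\Omega}3)$ are taken under $\Psi$ to the local inputs of $(\widehat{G}0), (\widehat{G}0r), (\widehat{G}1f), (\widehat{G}2), (\widehat{G}2'), (\widehat{G}3)$, and conversely. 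This is exactly the observation already made at the end of \cref{constr:XCT->XCGD}, but carried out for each move by inspecting the chord data produced.

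Finally I would check the categorical structure. Functoriality is immediate because composition on both sides is defined by gluing bottom endpoints to top endpoints in an orderly fashion, and the induced identification $V_{in} \cong V_{out}$ on the tangle side corresponds to the identification $\partial_{top} X_1 \cong \partial_{bot} X_2$ on the Gauss side. Identities match since the identity $n$-strand XC-tangle has empty chord/rotation data. Strict monoidality follows because the disjoint union of $1$-manifolds and the ordinal sum of orders are exactly mirrored by disjoint union of XC-tangle diagrams and ordinal sum of $V_{in}, V_{out}$. For the symmetric braiding $P_{n,m}$, both $\Psi(P_{n,m})$ and the Gauss-diagram braiding are defined by $n+m$ chord-free, marking-free oriented intervals realising the cyclic permutation $i \mapsto i+n \pmod{n+m}$, hence they agree.

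The main obstacle I expect is the move-by-move verification that the XC-Reidemeister moves on the tangle side correspond precisely, under $\Psi$ and $\Phi$, to the moves $(\widehat{G}0)$--$(\widehat{G}3)$ on the Gauss side (in particular handling the signs on crossings and diamonds, the vertex orientation, and the placement of rotation numbers across the move). Once the dictionary between the two move sets is established, everything else is bookkeeping.
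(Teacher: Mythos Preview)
Your proposal is correct and follows essentially the same approach as the paper: build the explicit inverse by turning chords into signed tetravalent vertices and diamonds into edges with nonzero rotation number, observe that the XC-Reidemeister moves correspond bijectively to the $(\widehat{G})$-moves, and then check the monoidal and symmetric structure by inspection. One small slip to fix: each diamond with sign $\varepsilon$ should become a single \emph{edge} with rotation number $\varepsilon$ (bounded by bivalent vertices), not a bivalent vertex whose \emph{two} adjacent edges both carry rotation $\varepsilon$, since otherwise $\Psi\circ\Phi$ would double the number of diamonds.
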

\begin{proof}
By the paragraph above, we have a well-defined map $$\eend{\TXC}{n} \to   \eend{\mathcal{GD}^{\mathrm{XC}}}{n}. $$ Its inverse is constructed similarly, considering a disjoint union of as many crossings as chords the XC-Gauss diagram has and merging them as we read off the vertices of $D$ on $X$ along the orientation, and  adding additional edges with rotation number $\pm 1$ as we encounter diamond decorations on $X$.  This means that \cref{constr:XCT->XCGD} induces an isomorphism of categories $$ \TXC \overset{\cong}{\to}   \mathcal{GD}^{\mathrm{XC}}$$ which is the identity on objects.

That the functor is strict symmetric monoidal essentially follows by construction: the disjoint union of XC-tangles is sent to the disjoint union of XC-Gauss diagrams and the symmetric braiding on one side is sent to the symmetric braiding on the other side. 
\end{proof}

\subsection{Relation with virtual upwards tangles}\label{subsec:vTup}

Now we would like to explain how the setup of XC-tangles and XC-Gauss diagrams can be related with the classical virtual tangles. In fact, there are two different functorial passages that we can construct.

At the level of Gauss diagrams, there is a canonical strict symmetric monoidal full forgetful functor $$ \mathcal{GD}^{\mathrm{XC}} \longtwoheadrightarrow  \mathcal{GD}^{\mathrm{up}} $$ that simply forgets  the diamond decorations. We would like to describe the corresponding functor at the level of XC- and virtual upwards tangles.

\begin{construction}\label{constr:XC->virtual}
Given an XC-tangle $T$ with $n$ strands, we will construct an $n$-component  virtual upwards tangle. First, place on the interior of $D^1 \times D^1$ as many crossings as tetravalent vertices $T$ has, according to the pattern \eqref{eq:pos_neg_crossing} depending on the sign attached to the vertex.  Next subdivide $D^1 \times \partial D^1$ in $n$ uniformly distributed points, label the points of $D^1 \times \{ -1 \}$ and $D^1 \times \{ +1 \}$  with $1, \ldots , n$ from left to right. Write  $\sigma: V_{out } \toiso V_{in}$ for the bijection induced by taking endpoints. For every $i=1, \ldots, n$, merge the point of $D^1 \times \{ -1 \}$ labelled with $i$ with the point of $D^1 \times \{ +1 \}$ labelled with $\sigma(i)$  passing through the placed crossings according to the order dictated by the $i$-th strand of $T$. Whenever self-intersections are produced, place a singular crossing on it. Note that rotation numbers are completely ignored in this description.
\end{construction}

\begin{example}
The XC-tangle from \cref{ex:XC_tangle} gives rise, through the previous construction, to the following virtual upwards tangle:
\begin{equation*}
\centre{
\centering
\includegraphics[width=0.27\textwidth]{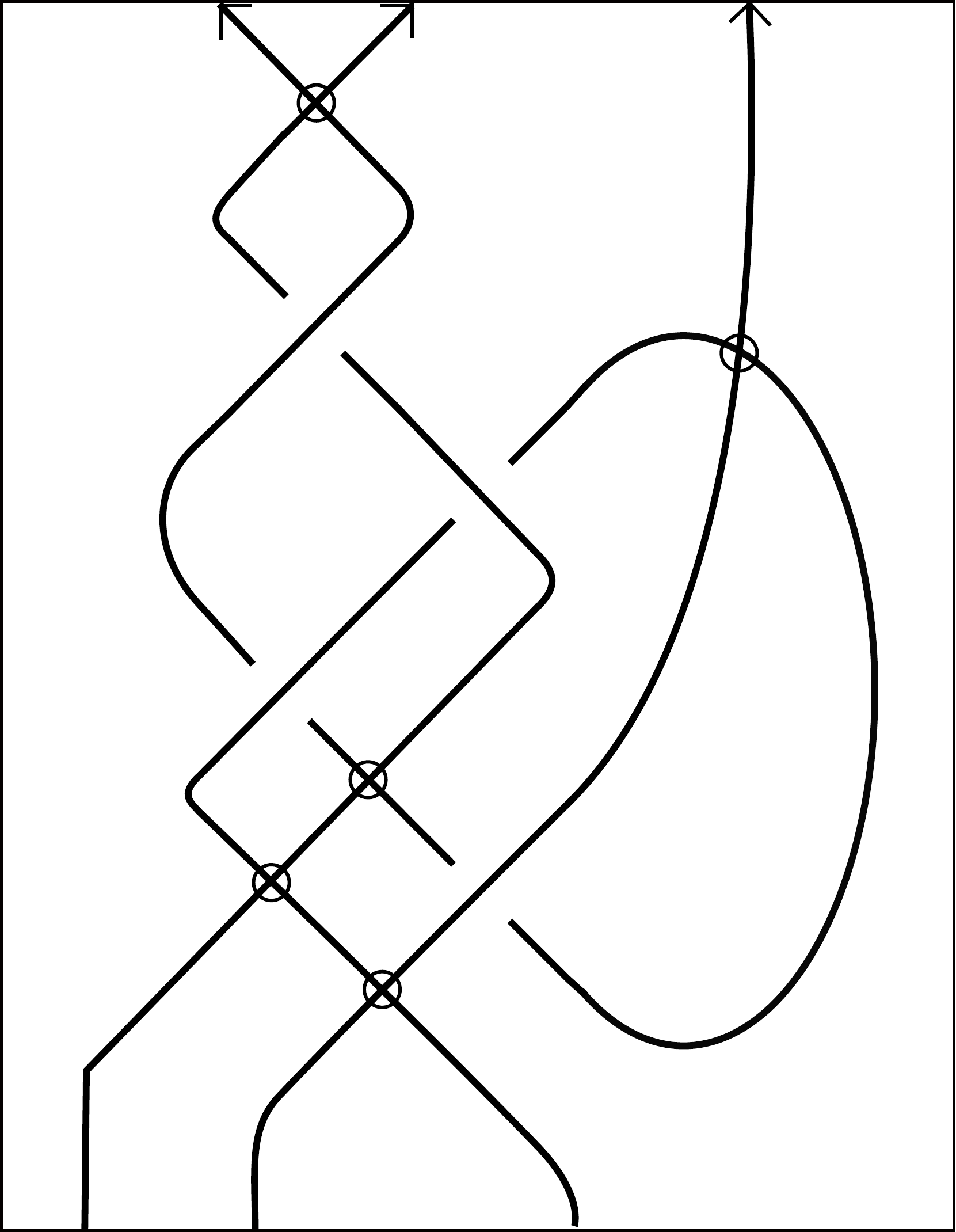}}
\end{equation*}
\end{example}

\begin{proposition}\label{prop:forgetful_XC_virtual}
The previous construction gives rise to a strict symmetric monoidal full  functor  $$ U: \T^{\mathrm{XC}} \longtwoheadrightarrow  v\T^{\mathrm{up}} $$ 
that makes the following diagram commute,
$$
\begin{tikzcd}
v\T^{\mathrm{XC}} \dar[two heads] \rar{\cong} & \mathcal{GD}^{\mathrm{XC}} \dar[two heads] \\
v\T^{\mathrm{up}} \rar{\cong} & \mathcal{GD}^{\mathrm{up}}
\end{tikzcd}
$$
\end{proposition}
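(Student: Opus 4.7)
The plan is to reduce the statement to the Gauss diagram picture, where the forgetful operation is manifestly well-behaved, and then check that this abstractly defined functor recovers \cref{constr:XC->virtual}.

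First I would define an auxiliary functor $U': \mathcal{GD}^{\mathrm{XC}} \longtwoheadrightarrow \mathcal{GD}^{\mathrm{up}}$ sending an XC-Gauss diagram $(X, D, P)$ to the Gauss diagram $(X, D)$, that is, forgetting the diamond decorations while keeping the underlying polarised 1-manifold, the chords and their signs. For well-definedness, it suffices to check that each XC-Reidemeister move for XC-Gauss diagrams from \cref{sec:XC-Gauss} is mapped either to an equality or to one of the moves $(G1f)$, $(G2)$, $(G2')$, $(G3)$, $(G3')$. Indeed, the moves $(\widehat{G}0)$, $(\widehat{G}0r)$, $(\widehat{G}1f)$, $(\widehat{G}2')$ only modify the diamond marking, so their images under $U'$ coincide; the remaining moves $(\widehat{G}2)$, $(\widehat{G}3)$ concern the chord data alone and map to the analogous moves in $\mathcal{GD}^{\mathrm{up}}$, with diamonds present or absent on both sides of the move equally.

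Next, I would observe that $U'$ is strict symmetric monoidal, since disjoint union of XC-Gauss diagrams and the symmetric braidings (which carry neither chords nor diamonds) are preserved on the nose. Moreover, $U'$ is full: any Gauss diagram $(X, D) \in \mathcal{GD}^{\mathrm{up}}$ is the image of the XC-Gauss diagram $(X, D, \emptyset)$ with empty diamond marking. Define now
\begin{equation*}
U \, : \, \TXC \overset{\cong}{\longrightarrow} \mathcal{GD}^{\mathrm{XC}} \overset{U'}{\longtwoheadrightarrow} \mathcal{GD}^{\mathrm{up}} \overset{\cong}{\longrightarrow} v\Tup,
\end{equation*}
using the isomorphisms of \cref{thm:TXC=GDXC} and \cref{thm:3_equivalences}. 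As a composition of strict symmetric monoidal full functors, $U$ is itself strict symmetric monoidal and full, and the square in the statement commutes by construction.

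Finally, I would verify that this abstractly defined $U$ agrees with \cref{constr:XC->virtual}. Both assignments create one real crossing (of the prescribed sign) per tetravalent vertex of the XC-tangle $T$, produce $n$ boundary endpoints ordered according to $V_{out}$ and $V_{in}$, and connect these endpoints to the crossings following the cyclic combinatorial data dictated by the strands; neither construction sees rotation numbers. The planar embedding chosen in \cref{constr:XC->virtual} is unique up to the detour move and the virtual Reidemeister moves, precisely because, under the isomorphism $v\Tup \cong \mathcal{GD}^{\mathrm{up}}$ of \cref{thm:3_equivalences}, a virtual upwards tangle is determined by its Gauss datum. The only subtle point is thus that the combinatorial translation of the XC-moves to the Gauss-diagram moves has been carried out consistently; this is guaranteed by the fact that \cref{sec:XC-Gauss} was designed so that the equivalence of \cref{thm:TXC=GDXC} takes each XC-Reidemeister move for tangle diagrams to its namesake for XC-Gauss diagrams.
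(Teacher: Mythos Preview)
Your proof is correct and takes a genuinely different route from the paper's. The paper works directly with \cref{constr:XC->virtual}: it first verifies that the construction is independent of the planar choices (placement of crossings, shape of the connecting arcs) by invoking the detour move, then checks by hand that each XC-Reidemeister move $(\widehat{\Omega}0a)$--$(\widehat{\Omega}3)$ is sent to a valid relation in $v\Tup$, and finally observes that the square commutes by comparing the two sides. You instead transport the manifestly well-defined forgetful $U': \mathcal{GD}^{\mathrm{XC}} \to \mathcal{GD}^{\mathrm{up}}$ through the isomorphisms of \cref{thm:TXC=GDXC} and \cref{thm:3_equivalences}, and only afterwards identify the result with \cref{constr:XC->virtual}. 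Your approach is cleaner in one respect: the independence of \cref{constr:XC->virtual} from the planar choices comes for free, since a virtual upwards tangle is determined by its underlying Gauss datum, whereas the paper spends a paragraph arguing this via the detour move. The paper's approach, on the other hand, is more self-contained and makes the passage of each XC-move explicit.

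One small imprecision: your classification of which XC-Gauss moves ``only modify the diamond marking'' is not quite right. The move $(\widehat{G}2')$ (the Gauss counterpart of the sideways Reidemeister~2 moves $(\widehat{\Omega}2c)$, $(\widehat{\Omega}2d)$) also inserts a cancelling pair of opposite-sign chords, so after forgetting diamonds it maps to an instance of $(G2)$/$(G2')$ rather than to an equality. This does not affect your conclusion, since the image is still a valid relation in $\mathcal{GD}^{\mathrm{up}}$, but the bookkeeping should be adjusted.
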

\begin{proof}
The first step is to check that \cref{constr:XC->virtual} is independent of the choices involved. That the resulting virtual upwards tangle does not depend on the precise arcs chosen to merge the crossings is a consequence of the detour move (in the framed setting). Similarly, to check the independence with respect to the exact placement of the crossings in $D^1 \times D^1$  we can argue as follows: given a crossing and a different placement in the square, choose a path between the two different locations that intersects the arcs of the tangle transversally. For each of these intersections we can pass the crossing through the arc by means of the detour move (and planar isotopy),  
\begin{equation*}
\centre{
\labellist \scriptsize \hair 2pt
\pinlabel{$ =$}  at 410 200
\endlabellist
\centering
\includegraphics[width=0.35\textwidth]{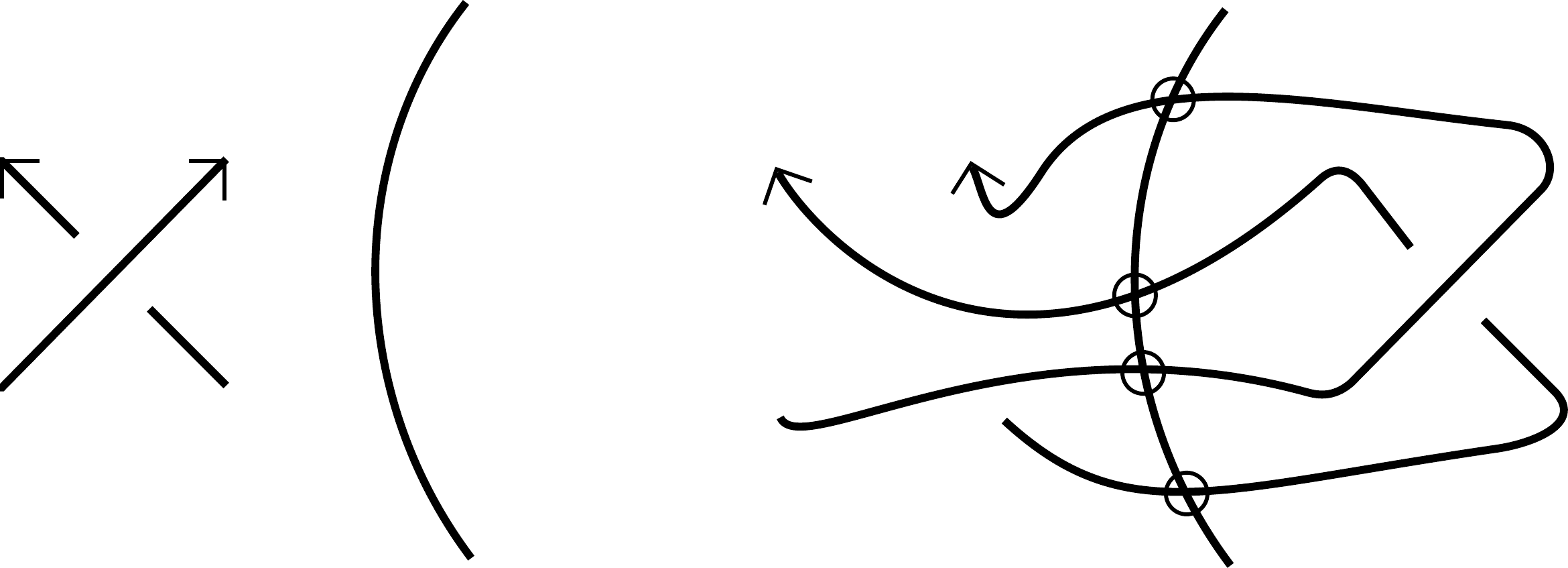}}
\end{equation*}
That \cref{constr:XC->virtual} is well-defined at the level of equivalence classes of XC-tangle diagrams is immediate: on the one hand, the relations $(\widehat{\Omega} 0a)$ -- $(\widehat{\Omega} 0c)$ are trivially preserved as the construction ignores the rotation numbers, and on the other hand the remaining moves  $(\widehat{\Omega} 1f)$ -- $(\widehat{\Omega} 3)$ are preserved as they are realised precisely as depicted in \cref{subsec:XC-tangles}.

All together, this implies that there is  a well-defined map $$\eend{\T^{\mathrm{XC}}}{n} \longtwoheadrightarrow \eend{v\T^{\mathrm{up}}}{n}$$
which is trivially surjective: for a given virtual upwards diagram $D$, an XC-tangle diagram with all edges having rotation number zero and mapping to $D$ under the previous map can be easily built simply by merging copies of $X^\pm$ according to the pattern dictated by $D$ (ignoring virtual crossings) and ordering the sets of univalent outgoing and incoming vertices  as dictated by the left-to-right order of the top and bottom endpoints of $D$. It is also clear that \cref{constr:XC->virtual} preserves the composition and the disjoint union of diagrams so  it indeed defines a full symmetric monoidal functor $U: \T^{\mathrm{XC}} \longtwoheadrightarrow  v\T^{\mathrm{up}}. $ Finally, the commutativity of the diagram follows from comparing \cref{constr:XCT->XCGD}, \cref{constr:XC->virtual}  and the description of the isomorphism $v\T^{\mathrm{up}} \cong  \mathcal{GD}^{\mathrm{up}}$ given in \cref{subsec:virtual_links}.
\end{proof}

It turns out that the forgetful functor $ U: \T^{\mathrm{XC}} \longtwoheadrightarrow  v\T^{\mathrm{up}} $, which arises canonically (at least  from the perspective of Gauss diagrams via \cref{prop:forgetful_XC_virtual}) admits a non-trivial section.  To construct this we first have to introduce rotational diagrams of upwards virtual tangles. In the non-virtual setting, these have already appeared in  \cite{becerra_gaussians,becerra_thesis, becerra_refined,BH_reidemeister} inspired by \cite{barnatanveenpolytime,barnatanveengaussians}

A diagram $D$ of a virtual upwards tangle is said to be  \textit{rotational} if all crossings  of $D$ (real and virtual) point upwards and all maxima and minima appear in pairs of the following two forms,
\begin{equation}\label{eq:full_rotations}
\centre{
\centering
\includegraphics[width=0.27\textwidth]{figures/rot_tangle}}
\end{equation}
where the dashed discs denote some neighbourhoods of that piece of strand in the tangle diagram.  
We regard rotational tangle diagrams up to \textit{Morse isotopy}\index{Morse isotopy}, that is, planar isotopy that preserve all maxima and minima. In other words, we do not allow isolated cups and caps (``half rotations''), instead they must appear in pairs, either  $\capl$ and  $\cupr$ or  $\capr$ and  $\cupl$,  forming full rotations, as depicted in \eqref{eq:full_rotations}. 

It was shown in \cite[Lemma 3.2]{becerra_refined} that every (non-virtual) upwards tangle has a rotational diagram. The proof   carries over verbatim to the virtual setting, which yields

\begin{lemma}\label{lem:vT_rot_diagrams}
Every virtual upwards tangle has a rotational diagram.
\end{lemma}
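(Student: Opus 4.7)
The plan is to adapt verbatim the argument of \cite[Lemma 3.2]{becerra_refined} for non-virtual upwards tangles, observing that the extra flexibility afforded by the detour move resolves any complication coming from virtual crossings. I would start with an arbitrary diagram $D$ of the given virtual upwards tangle and, by a small planar isotopy, place $D$ in general position with respect to the height function on $D^1 \times D^1$, so that its local extrema and its crossings (real and virtual) lie at pairwise distinct heights with nonvertical tangents at every crossing.

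The first step is to orient every crossing upwards. Each downward-pointing real crossing can be rotated by $\pi$ at the cost of creating a maximum--minimum pair on an incoming strand, exactly as in the non-virtual case, by a combination of the framed Reidemeister moves $(\Omega 2')$ and $(\Omega 1f)$ together with planar isotopy. A downward-pointing virtual crossing is dealt with directly by the detour move, which replaces the offending downward arc by an arc going around, thereby producing an upward-pointing virtual crossing (and possibly additional virtual crossings elsewhere that can in turn be oriented upwards by the same procedure).

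The second step is to pair each remaining extremum into a full rotation. Since each strand of an upwards tangle starts and ends going upwards, its critical points alternate beginning with a maximum and ending with a minimum, and so they come in natural consecutive pairs along the strand. I would then bring each such paired maximum and minimum together by a Morse isotopy to form one of the two configurations depicted in \eqref{eq:full_rotations}; virtual crossings lying in the way are swept aside by the detour move, and real crossings are left undisturbed by sliding the pair around them via $(\Omega 2')$ and $(\Omega 3')$.

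The main obstacle I anticipate is this last step: the pairing must be realised without altering the tangle class and without reintroducing downward-pointing crossings. The key observation is that the strand segment between a matched maximum and minimum is an embedded arc whose intersections with the rest of the diagram are all transverse, so that the detour move freely removes virtual obstructions, and the remaining upward-pointing real crossings can be slid out of the way by framed Reidemeister moves without disturbing their upward orientation. The resulting diagram is then rotational and represents the same virtual upwards tangle $T$, proving the lemma.
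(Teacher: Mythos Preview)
Your outline matches the paper's two-step scheme (rotate every crossing so that it points upwards, then pair the remaining extrema into full rotations), but you have overcomplicated both steps and, in doing so, introduced a gap where the paper has none.

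First, rotating a crossing---real or virtual---so that it points upwards is a pure planar isotopy of a small disc around the crossing; no Reidemeister move is involved, and in particular neither $(\Omega 2')$ nor $(\Omega 1f)$ plays any role here. The same applies to virtual crossings: there is no need for the detour move at this stage, and your parenthetical ``possibly additional virtual crossings elsewhere that can in turn be oriented upwards by the same procedure'' raises a termination issue you never address. Simply rotate each crossing locally by planar isotopy; nothing new is created.

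Second, and more importantly, once every crossing points upwards, every \emph{edge} of the diagram (that is, every arc between two consecutive crossings or between a crossing and a boundary point) begins and ends with an upward tangent. Hence the critical points on each edge already occur in consecutive $(\max,\min)$ pairs, and each such pair lies on a crossing-free arc. Your worry that ``real crossings are left undisturbed by sliding the pair around them via $(\Omega 2')$ and $(\Omega 3')$'' is therefore addressing a situation that does not occur: no real crossing ever separates a matched maximum from its minimum. This is the observation you are missing, and without it your sliding step is both unjustified and unnecessary.

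The paper's argument (inherited verbatim from \cite[Lemma~3.2]{becerra_refined}) exploits exactly this: after rotating the crossings by planar isotopy, one works edge by edge, cancelling zig-zags and collecting the remaining extrema into full rotations, again by planar isotopy alone. No Reidemeister moves enter the proof.
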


Roughly, this is achieved by using only planar isotopies in the original  upwards virtual tangle diagram, rotating all crossings so that they point up, and then adding full rotations wherever needed removing possibly created zig-zags, see below for an example:
\begin{equation*}
\centre{
\centering
\includegraphics[width=0.5\textwidth]{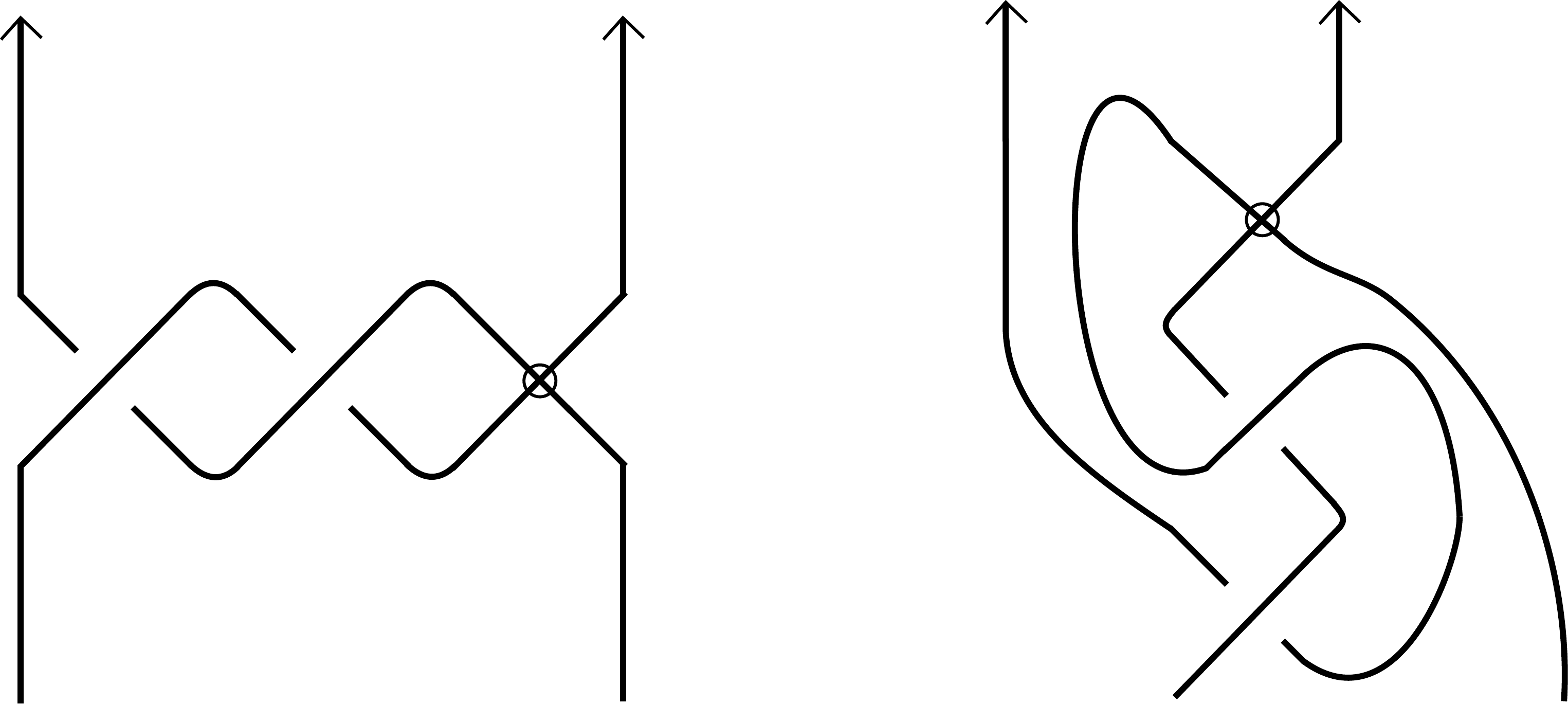}}
\end{equation*}

More remarkably, it is possible to stay in the realm of rotational diagrams by considering a rotational version of the Reidemeister moves. In the non-virtual setting, several generating sets of rotational  Reidemeister moves for framed and unframed tangles are described in \cite{BH_reidemeister}. Adapting the arguments therein, it is not difficult to see that the following family constitutes a generating set of rotational Reidemeister moves for virtual framed tangles: 
\begin{equation*}
\centre{
\labellist \small \hair 2pt
\pinlabel{$\leftrightsquigarrow$}  at 439 190
\pinlabel{{\scriptsize $\Omega 0a$}}  at 445 260
\pinlabel{$\leftrightsquigarrow$}  at 1625 190
\pinlabel{{\scriptsize $\Omega 0b$}}  at 1630 260
\pinlabel{$\leftrightsquigarrow$}  at 2865 190
\pinlabel{{\scriptsize $\Omega 0c$}}  at 2869 260
\endlabellist
\centering
\includegraphics[width=0.98\textwidth]{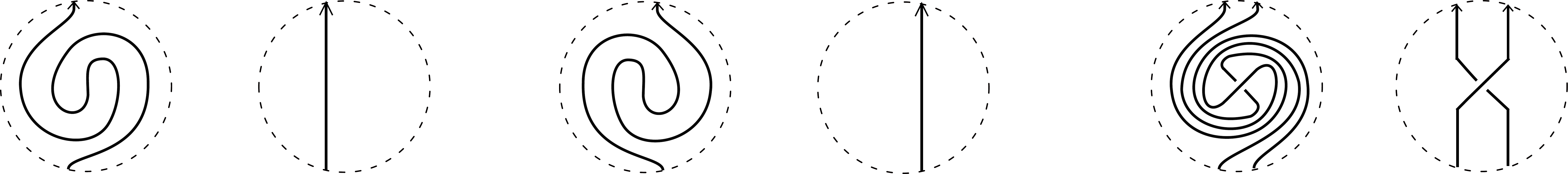}}
\end{equation*}
\begin{equation*}
\centre{
\labellist \small \hair 2pt
\pinlabel{$\leftrightsquigarrow$}  at 439 190
\pinlabel{{\scriptsize $\Omega 0d$}}  at 445 260
\pinlabel{$\leftrightsquigarrow$}  at 1625 190
\pinlabel{{\scriptsize $\Omega 1 \textup{f}e$}}  at 1630 260
\pinlabel{$\leftrightsquigarrow$}  at 2890 190
\pinlabel{{\scriptsize $\Omega 2a$}}  at 2890 260
\endlabellist
\centering
\includegraphics[width=0.98\textwidth]{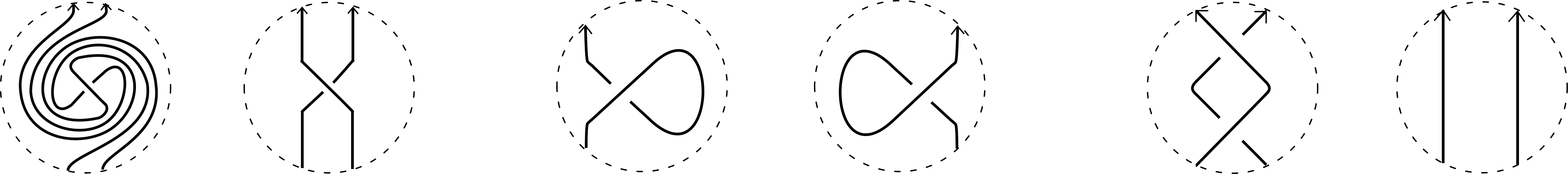}}
\end{equation*}
\begin{equation*}
\centre{
\labellist \small \hair 2pt
\pinlabel{$\leftrightsquigarrow$}  at 439 190
\pinlabel{{\scriptsize $\Omega 2b$}}  at 445 260
\pinlabel{$\leftrightsquigarrow$}  at 1635 190
\pinlabel{{\scriptsize $\Omega 2c1$}}  at 1650 260
\pinlabel{$\leftrightsquigarrow$}  at 2910 190
\pinlabel{{\scriptsize $\Omega 2d1$}}  at 2910 260
\endlabellist
\centering
\includegraphics[width=0.98\textwidth]{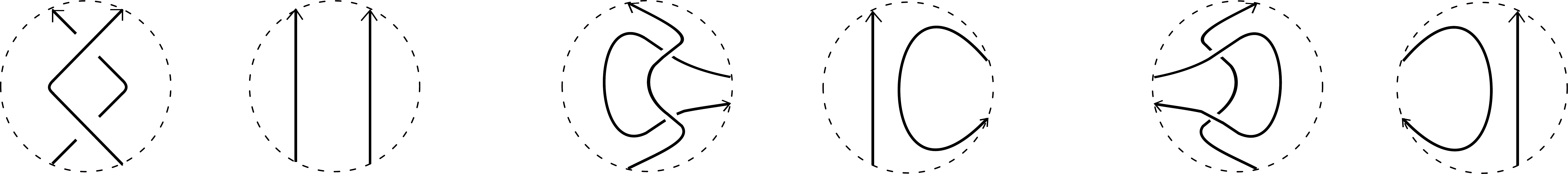}}
\end{equation*}
\begin{equation*}
\centre{
\labellist \small \hair 2pt
\pinlabel{$\leftrightsquigarrow$}  at 439 190
\pinlabel{{\scriptsize $\Omega 3b$}}  at 445 260
\pinlabel{$\leftrightsquigarrow$}  at 1635 190
\pinlabel{{\scriptsize $v\Omega 0$}}  at 1650 260
\pinlabel{$\leftrightsquigarrow$}  at 2910 190
\pinlabel{{\scriptsize $v\Omega 2$}}  at 2910 260
\endlabellist
\centering
\includegraphics[width=0.98\textwidth]{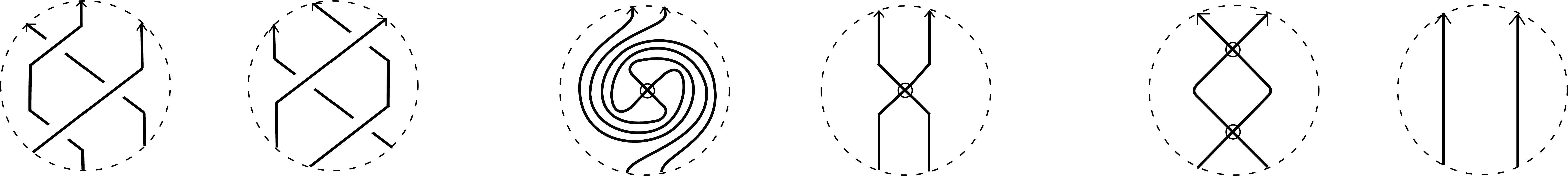}}
\end{equation*}
\begin{equation*}
\centre{
\labellist \small \hair 2pt
\pinlabel{$\leftrightsquigarrow$}  at 439 190
\pinlabel{{\scriptsize $v\Omega 2c$}}  at 445 260
\pinlabel{$\leftrightsquigarrow$}  at 1635 190
\pinlabel{{\scriptsize $v\Omega 2d$}}  at 1650 260
\pinlabel{$\leftrightsquigarrow$}  at 2910 190
\pinlabel{{\scriptsize $v\Omega 3b$}}  at 2910 260
\endlabellist
\centering
\includegraphics[width=0.98\textwidth]{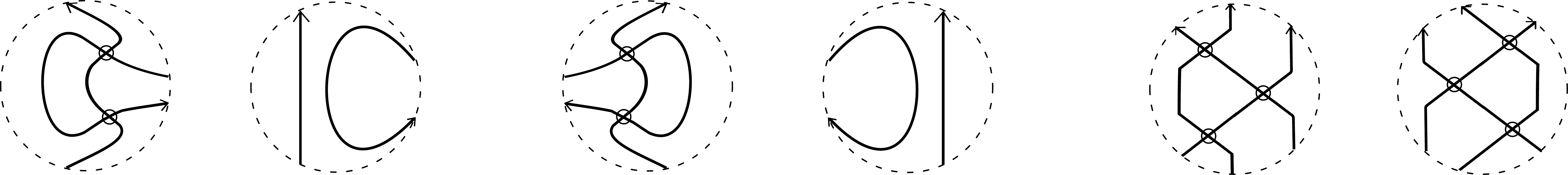}}
\end{equation*}
\begin{equation*}
\centre{
\labellist \small \hair 2pt
\pinlabel{$\leftrightsquigarrow$}  at 439 190
\pinlabel{{\scriptsize $m\Omega 3b$}}  at 445 260
\endlabellist
\centering
\includegraphics[width=0.27\textwidth]{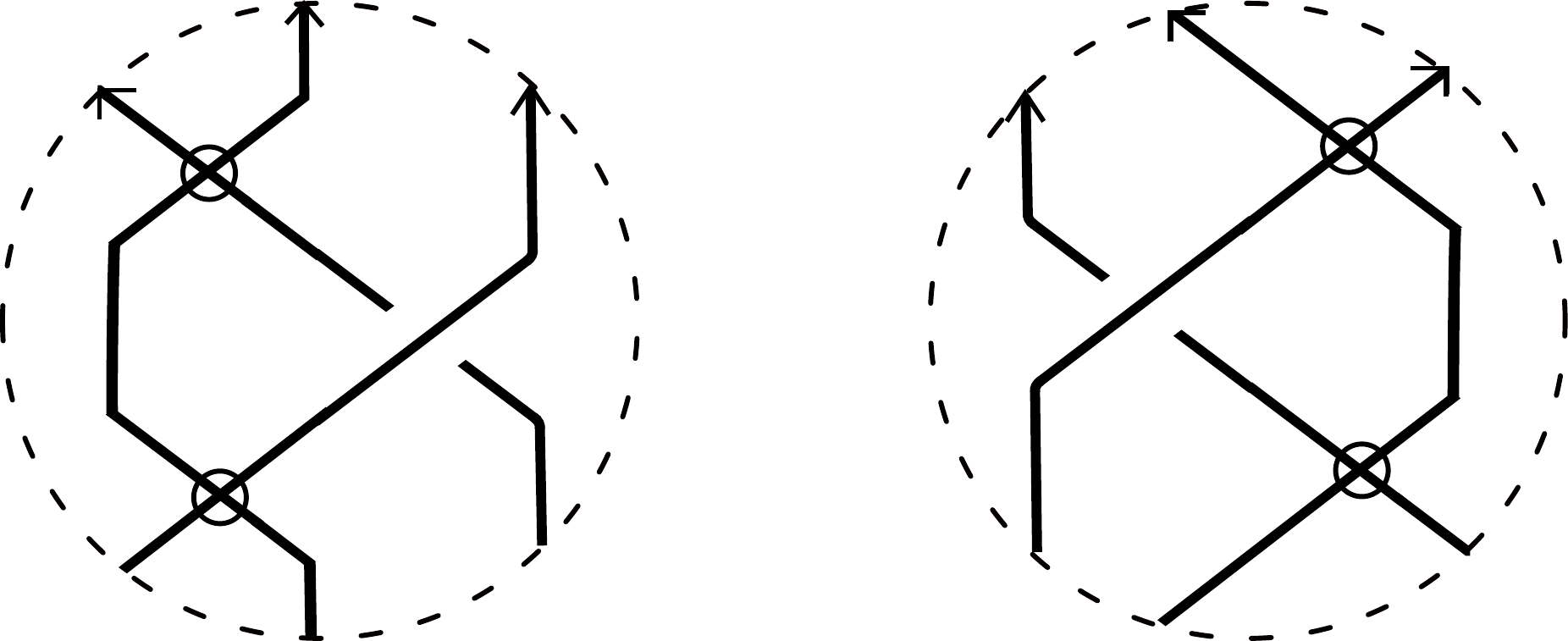}}
\end{equation*}

In particular, this implies the following alternative description of $v\Tup$: for each $n \geq 0$, the set $\mathrm{End}_{v\Tup}(n)$ can be regarded as the set of upwards virtual tangle diagrams modulo the real, virtual and mixed rotational Reidemeister moves depicted above.

\cref{lem:vT_rot_diagrams} is the key ingredient in the following

\begin{construction}\label{constr:I}
Given a virtual upwards tangle, let $D$ be a rotational diagram of the tangle. We can associate an XC-tangle to it as follows: we can view $D$ as a planar unitetravalent graph. We can lift this graph to another graph $G$ with vertices of valence $1,2$ and 4 by splitting the vertices corresponding to the virtual crossings into two different bivalent vertices joining opposite edges. We can also endow each edge of $G$ with a rotation number $-1,0,1$ following the rule that we assign $0$ to the edges which are upwards in $D$, rotation number $1$ to the edges that were like the left-hand side of \eqref{eq:rotations} (clockwise rotations) in $D$, and $-1$ to those that were like the right-hand side of  \eqref{eq:rotations} in $D$.

Further, tetravalent vertices are given the sign $+$ or $-$ according to the sign of the crossing. The splitting $V_{out} \amalg V_{in}$ is determined based on whether the univalent vertex lies in $D^1 \times \{-1\}$ or  $D^1 \times \{+1\}$, respectively, and the total order in each of the parts is decided by reading the vertices from left to right.
\end{construction}

\begin{theorem}\label{thm:vTup->TXC}
The previous construction gives rise to a strict symmetric monoidal embedding  $$ I: v\T^{\mathrm{up}} \hooklongrightarrow \T^{\mathrm{XC}}    $$ 
which is a section for $U$.
\end{theorem}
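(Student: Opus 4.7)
The plan is to verify four things in sequence: (a) the assignment $D \mapsto I(D)$ is well-defined modulo the generating rotational Reidemeister moves listed above, (b) $I$ respects composition, identities and the symmetric monoidal structure, (c) $U \circ I = \id_{v\Tup}$, and (d) faithfulness follows automatically from (c). By \cref{lem:vT_rot_diagrams} every object of $v\Tup$ has a rotational representative, so well-definedness on rotational diagrams together with invariance under rotational Reidemeister moves will yield a strict monoidal functor on $v\Tup$ itself.

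For (a), the classical rotational moves $(\Omega 0a)$--$(\Omega 0d)$, $(\Omega 1fe)$, $(\Omega 2a)$, $(\Omega 2b)$, $(\Omega 2c1)$, $(\Omega 2d1)$, $(\Omega 3b)$ translate directly, under \cref{constr:I}, to their XC-analogues $(\widehat{\Omega} 0a)$--$(\widehat{\Omega} 0r)$, $(\widehat{\Omega} 1f)$, $(\widehat{\Omega} 2a)$--$(\widehat{\Omega} 2d)$, $(\widehat{\Omega} 3)$ from \cref{subsec:XC-tangles}; one simply compares pictures, noting that full rotations become edges of rotation number $\pm 1$ in accordance with the sign convention of \eqref{eq:rotations}. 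For the purely virtual moves $(v\Omega 0)$, $(v\Omega 2)$, $(v\Omega 2c)$, $(v\Omega 2d)$, $(v\Omega 3b)$ and the mixed move $(m\Omega 3b)$, the key observation is that in $I(D)$ a virtual crossing is replaced by two disjoint bivalent vertices sitting on two edges that no longer meet abstractly. Applying the move $(\widehat{\Omega} 0r)$ with $i=j=0$ to each such bivalent vertex absorbs it into its adjacent edges; hence the underlying abstract decorated graphs on both sides of every virtual or mixed Reidemeister move coincide after a finite sequence of $(\widehat{\Omega} 0r)$ moves. This checks invariance.

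For (b), vertical and horizontal stacking of rotational diagrams remain rotational, and the symmetric braiding $P_{n,m}$ of $v\Tup$ is represented by the rotational diagram consisting solely of virtual crossings arranged so as to permute $n+m$ upward strands; under \cref{constr:I} this lifts to the XC-tangle with $n+m$ oriented intervals and the same permutation of $V_{out} \toiso V_{in}$, which is exactly the symmetric braiding $P_{n,m}$ in $\T^{\mathrm{XC}}$. Naturality of composition and disjoint union with respect to $I$ is immediate from the construction, as is the behaviour on the ordered sets of univalent vertices. For (c), starting from a rotational diagram $D$ of an upwards virtual tangle, applying $U$ to $I(D)$ produces a virtual upwards diagram by connecting back the crossings of $I(D)$ via arcs in $D^1 \times D^1$. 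Those arcs can be chosen to follow the same strand-by-strand trajectories as $D$ itself, so the resulting diagram differs from $D$ only in the positions of virtual crossings arising from self-intersections of the chosen arcs; these are absorbed by the detour move (already used in the proof of \cref{prop:forgetful_XC_virtual}), so $U(I(D)) = D$ in $v\Tup$. Finally, (d) is formal: if $I(T_1) = I(T_2)$ in $\T^{\mathrm{XC}}$ then $T_1 = U(I(T_1)) = U(I(T_2)) = T_2$, so $I$ is faithful and hence an embedding.

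The main obstacle is (a), and specifically understanding the lifts of the virtual and mixed Reidemeister moves: one has to confirm that the abstract-graph realisation of a virtual crossing as a pair of unlinked bivalent vertices on two different strands is consistent with every such move, so that both sides reduce to the same XC-tangle via $(\widehat{\Omega} 0r)$. Once this is checked once and for all, everything else in the proof is essentially bookkeeping built on top of \cref{lem:vT_rot_diagrams} and \cref{prop:forgetful_XC_virtual}.
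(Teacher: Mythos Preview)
Your proposal is correct and follows essentially the same approach as the paper's own proof: real rotational moves map to their $\widehat{\Omega}$-analogues, virtual and mixed moves are handled via $(\widehat{\Omega}0r)$ absorbing the bivalent vertices produced from virtual crossings, functoriality and monoidality are by inspection, $U\circ I=\id$ by undoing the virtual-crossing splitting, and faithfulness follows from the existence of a left inverse. Your treatment is simply more explicit (in particular you spell out the symmetric braiding and the detour-move argument for $U\circ I=\id$), but there is no substantive difference in strategy.
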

\begin{proof}
For each $n \geq 0$, \cref{constr:I} induces a well-defined map $$ I: \mathrm{End}_{v\Tup}(n) \to  \mathrm{End}_{\TXC}(n)  .$$ Indeed the real rotational Reidemeister moves are obviously preserved because the relations $(\widehat{\Omega}0a)$ -- $(\widehat{\Omega}3)$ are exactly the images under \cref{constr:I} of the moves $(\Omega 0a)$ --  $(\Omega 3b)$. The virtual rotational Reidemeister moves are preserved by means of the relation $(\widehat{\Omega}0r)$, and so is the mixed relation as both sides are mapped to the XC-tangle diagram
\vspace*{5pt}
\begin{equation*}
\centre{
\labellist \scriptsize \hair 2pt
\pinlabel{$ 1$}  at 0 -30
\pinlabel{$ 1$}  at 0 200
\pinlabel{$ 3$}  at 160 -30
\pinlabel{$ 3$}  at 160 200
\pinlabel{$ 2$}  at 320 -30
\pinlabel{$ 2$}  at 320 200
\endlabellist
\centering
\includegraphics[width=0.12\textwidth]{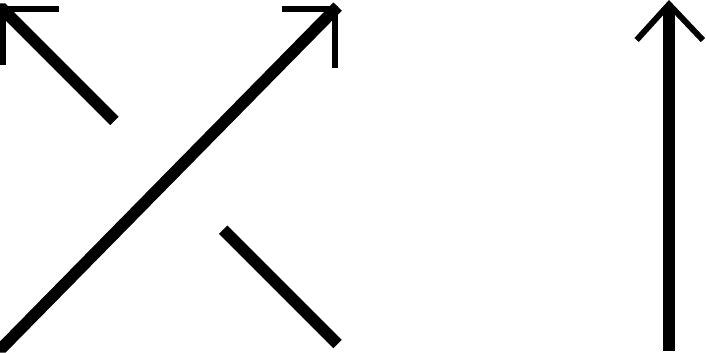}}.
\end{equation*}
\vspace*{5pt}

\noindent Setting $I$ to be the identity on objects defines a strict symmetric monoidal functor $I: v\T^{\mathrm{up}} \to \T^{\mathrm{XC}} $; functoriality and the behaviour with respect to disjoint unions follow comparing the constructions. Lastly that $I$ is a section for $U$, that is $U \circ I=\id$, follows from reglueing in a virtual crossing the original virtual crossing that \cref{constr:I} split into two bivalent vertices. That $I$ has a left inverse  implies in particular that $I$ is an embedding.
\end{proof}

\begin{remark}
Because $U$ fails to be an embedding, $I$ cannot be an isomorphism.
\end{remark}

Obviously, we can view such an embedding $I$ at the Gauss diagram level: there exists a unique strict symmetric monoidal embedding
$$\mathcal{GD}^{\mathrm{up}}  \hooklongrightarrow \mathcal{GD}^{\mathrm{XC}} $$
making the following square commutative:
$$
\begin{tikzcd}
\T^{\mathrm{XC}}  \rar{\cong} & \mathcal{GD}^{\mathrm{XC}}  \\
v\T^{\mathrm{up}}\uar[hook] \rar{\cong} & \mathcal{GD}^{\mathrm{up}} \uar[hook]
\end{tikzcd}
$$
This embedding takes an upwards Gauss diagram to a XC-Gauss diagram which is identical but diamond decorations are added. At the present, it is unclear to the author how to give an intrinsic description of this embedding directly from the upwards Gauss diagram, without invoking the algorithm from \cref{lem:vT_rot_diagrams}.

\begin{remark}
We warn the reader that  naïvely regarding a upwards Gauss diagram as a XC-Gauss diagram with no diamond decoration does \textit{not} descend to a well-defined map in equivalence classes.
\end{remark}

\section{Universal invariants}

In this section we show the precise way in which XC-tangles are the geometric counterpart of XC-algebras. In particular, we will produce a natural functorial invariant of XC-tangles for any choice of XC-algebra.

\subsection{XC-algebras} We start by recalling the notion of XC-algebra from \cite{becerra_refined,BH_reidemeister}. For later usage we will state this more abstractly in an arbitrary symmetric monoidal category, but the reader can safely think of the category of modules over some commutative ring, as in the Introduction or  \cref{ex:usual_XC_alg} below.

Let $(\C, \otimes, \bm{1}, P)$ be a symmetric monoidal category. For convenience we remove the associativity and unital constraints for the formulas, this is possible by Mac Lane's coherence theorem, see \cite{maclane,becerra_strictification}. Recall that  an \textit{algebra object} in $\C$ is an object $A \in \C$ together with two arrows $\mu: A \otimes A \to A$ and $\eta: \bm{1} \to A$, called the \textit{multiplication} and the \textit{unit}, satisfying the associativity and unit conditions
\begin{equation}\label{eq:algebra_object}
\mu \circ (\mu \otimes \id) = \mu \circ (\id \otimes \mu) \qquad , \qquad \mu \circ (\id \otimes \eta) = \id = \mu \circ (\eta \otimes \id).
\end{equation}
For $k \geq 1$, we consider iterated multiplication arrows $\mu^{[k]}: A^{\otimes 2k} \to A^{\otimes k}$  and $\mu_{k}:A^{\otimes k} \to A$  defined as
\begin{gather*}
\mu^{[1]} := \mu \quad , \quad \mu^{[k]} := (\mu^{[k-1]} \otimes \mu)(\id_{ A^{\otimes k-1}} \otimes  P_{A,  A^{\otimes k-1}} \otimes \id_A), \quad k \geq 2.\\
\mu_0 := \eta \quad , \quad \mu_1 := \id \quad , \quad \mu_k := \mu \circ (\mu_{k-1} \otimes \id), \quad k \geq 2
\end{gather*}
Given   arrows $a,b: \bm{1} \to A^{\otimes k}$, we will write $a \cdot b := \mu^{[k]} (a \otimes b)$. Further,  say that an arrow $a:\bm{1} \to A^{\otimes k}$ is \textit{multiplicatively invertible} if there exists another arrow $a^{-1}: \bm{1} \to  A^{\otimes k}$ such that  $ a \cdot a^{-1} = \eta^{\otimes k} = a^{-1} \cdot a .$

Now, let $(A, \mu, \eta)$ be an algebra object in $\C$.  An \textit{XC-structure} on $A$ is a choice of
two preferred,  multiplicatively invertible arrows
$$ R : \bm{1} \to A \otimes A \qquad , \qquad \kappa : \bm{1} \to A,  $$
called the \textit{universal $R$-matrix} and the \textit{balancing element}, respectively, satisfying the following axioms:

\begin{enumerate}[leftmargin=4\parindent, itemsep=2mm]
\item[(XC0)] $R^{\pm 1}=(\kappa \otimes \kappa) \cdot R^{\pm 1} \cdot (\kappa^{-1} \otimes \kappa^{-1})$,
\item[(XC1f)] $\mu_3(R_{31}\cdot  \kappa_2 )=\mu_3(R_{13}\cdot  \kappa^{-1}_2) $,
\item[(XC2c)] $  1\otimes \kappa^{-1} = (\mu\otimes \mu_3 )(R_{15}\cdot R_{23}^{-1}\cdot \kappa^{-1}_4 )$,
\item[(XC2d)] $\kappa \otimes 1 = (\mu_3\otimes\mu )(R_{15}^{-1}\cdot R_{34}\cdot \kappa_2 )$,
\item[(XC3)] $R_{12}R_{13}R_{23}=R_{23}R_{13}R_{12}$,
\end{enumerate}
where   for $1 \leq i,j \leq n$, $i \neq j$ we have put
\begin{equation}\label{eq:R_ij}
 R_{ij}^{\pm 1}:= \begin{cases}  (\eta^{\otimes i-1} \otimes \id_A \otimes \eta^{\otimes j-i-1} \otimes \id_A \otimes \eta^{n-j})R^{\pm 1}, & i<j\\
(\eta^{\otimes j-1} \otimes \id_A \otimes \eta^{\otimes i-j-1} \otimes \id_A \otimes \eta^{n-i})P_{A,A}R^{\pm 1}, & i>j
\end{cases} 
\end{equation}
and similarly $\kappa^{\pm 1}_i = (\eta^{\otimes i-1} \otimes \id_A \otimes  \eta^{\otimes n-i})\kappa^{\pm 1}$. The triple $(A, R, \kappa)$ consisting of an algebra object and an XC-structure on it will be called an \textit{XC-algebra} in $\C$. In \cite{barnatanveenpolytime} a  structure of a similar type  (subject to 20 axioms) appears under the name of ``snarl algebra''.

\begin{example}\label{ex:usual_XC_alg}
If $\Bbbk$ is a commutative ring with unit, and $(\mathsf{Mod}_\Bbbk, \otimes_\Bbbk, \Bbbk)$ denotes the symmetric monoidal category of $\Bbbk$-modules, then XC-algebra objects in $\mathsf{Mod}_\Bbbk$ are identified with the XC-algebras as described in the Introduction or in  \cite{becerra_refined,BH_reidemeister} via the natural bijection
\begin{equation}\label{eq:repres_forgetful}
\hom{\Bbbk}{\Bbbk}{M} \toiso M
\end{equation}
for any $\Bbbk$-module $M$ (the right-hand side of the bijection stands for the underlying set of $M$). 
\end{example}

\begin{example}\label{ex:XC_ribbon}
The main source of examples of XC-algebras comes from ribbon Hopf algebras. In \cite[Proposition 4.4]{becerra_refined} it was shown that if $A$ is a ribbon Hopf algebra over some commutative ring $\Bbbk$ with universal $R$-matrix $R \in A \otimes_{\Bbbk} A$ and ribbon element $v \in A$, then the triple $(A, R, \kappa:= uv^{-1})$ is an XC-algebra, where $u \in A$ is the Drinfeld element.

Even further, any representation $V$ of an XC-algebra induces an XC-algebra structure on $\eend{\Bbbk}{V}$, in particular any finite-free representation of a ribbon Hopf algebra, see \cite[Lemma 4.11]{becerra_refined}. More precisely, if $\rho: A \to \eend{\Bbbk}{V}$ is a finite-free representation and   $( R, \kappa)$ is an XC-structure on $A$, then  $( (\rho \otimes_{\Bbbk} \rho )(R), \rho(\kappa))$ is an XC-structure over $\eend{\Bbbk}{V}$.

For instance, if $V$ is a finite-free representation over the Jimbo-Drinfeld quantum group $U_h(\mathfrak{g})$, then $\eend{\mathbb{C}[[h]]}{V}$ is an XC-algebra. This arguments extends to quantum groups at the roots of unity or to  unrolled quantum groups \cite{CGP}, which do not have a well-defined ribbon structure but there are ``R-matrix'' and ``ribbon'' operators acting on their representations.
\end{example}

\subsection{Generators for \texorpdfstring{$\TXC$}{T^XC}}

We will know explain the precise sense in which the elementary XC-tangles displayed in \eqref{eq:crossings_and_spinners_INTRO} are generators for the category $\TXC$. For that we need to introduce some algebraic preliminaries.

Recall that a (classical, monocoloured) PROP is a symmetric monoidal category generated by a single object.  One could think of a PROP as a gadget to study abstract algebraic operations, whereas an \textit{algebra} over a PROP is a concrete realisation of these operations. We refer the reader to e.g. \cite{pirashvili,zanasi,becerra_combinatorial} for basics about PROPs. We write $\mathsf{A}$ for the   PROP induced by a symmetric monoidal theory with generators $$ \mu: 2 \to 1 \quad , \quad \eta: 0 \to 1  $$  and equations given by \eqref{eq:algebra_object}. The identity $\id_1: 1 \to 1 $, the symmetry $P_{1,1}: 2 \to 2$ and the axioms of a symmetric monoidal category  are implicitly among generators and relations. The PROP $\mathsf{A}$ is called the \textit{PROP for algebras}. Alternatively, $\mathsf{A}$ is ``the symmetric monoidal category generated by an algebra object''.

Let  $\mathcal{M}=(M_i)_{i \in \N}$ be a bundle monoidal category as in \cite[\S 2.5]{becerra_refined}, that is, $\mathcal{M}$ is the monoidal category obtained by a family of monoids $M_i$ with monoidal product determined by some structure monoid homomorphisms $\rho_{n,m}:M_n \times M_m \to M_{n+m}$. Inspired by \cite[\S 6.2]{habiro}, we define an \textit{external algebra action} on $\mathcal{M}$  as a functor $F_{\mathcal{M}}:\mathsf{A} \to \mathsf{Set}$ sending $n$ to $M_n$ which is compatible with the monoidal  product and composition of $\mathcal{M}$ in the sense that $\id_{M_n} =F_{\mathcal{M}}(\eta^{\otimes n})$ and  the diagram
\begin{equation}
\begin{tikzcd}
M_n \times M_n \arrow{dr} \rar{\rho_{n,m}} & M_{2n} \dar{F_{\mathcal{M}}(\mu^{[n]})} \\
& M_n
\end{tikzcd}
\end{equation}
commutes for all $n$, where the diagonal map is the multiplication of $M_n$.

We say that a bundle monoidal category $\mathcal{M}$ equipped with an external algebra action is monoidally generated by a collection of elements $(m_{i_k} \in M_{i_k})_k$ and the external algebra action if every arrow $m \in M_i$ can be obtained from the identities of $M_n$ and the elements $(m_{i_k})_k$ applying finitely many times the operations from the external algebra action, the composition of $\mathcal{M}$ and the monoidal product.

The category $\TXC$ (as well as the categories $\Tup$ and $v\Tup$) can be seen as a bundle monoidal category with $M_n= \eend{\TXC}{n} =: \TXC_n$.  There is an external algebra action that roughly speaking merges components of  a given XC-tangle diagram and introduces edges with rotation number zero. More precisely, according to \cite[Lemma 2]{habiro_fg} or \cite[Theorem 3.5]{becerra_combinatorial}, such an external algebra action is determined by morphisms $F_{\TXC} (P_\sigma): \TXC_n \to \TXC_n$ and $F_{\TXC} (\mu_{k_1, \ldots,k_r}): \TXC_k \to \TXC_r$ where $P_\sigma : n \to n$ is the arrow in $\mathsf{A}$ induced by the symmetric braiding and a permutation $\sigma \in \mathfrak{S}_n$, and $ \mu_{k_1, \ldots,k_r} := \mu_{k_1} \otimes \cdots \otimes \mu_{k_r}: \sum_i k_i = k \to r$,  with $k_i \geq 0$. The external algebra action $F_{\TXC}: \mathsf{A} \to \mathsf{Set}$ is then defined as follows:
\begin{itemize}
\item Given an XC-tangle $T$ with $n$ components, $F_{\TXC} (P_\sigma)(T)$ is the XC-tangle obtained from $T$ by changing the order of the sets of incoming and outcoming vertices according to $\sigma$,
\item Given $T$ an XC-tangle with $k$ components and  non-negative integers $k_1, \ldots , k_r$ such that $\sum_i k_i=k$, we have that $F_{\TXC} (\mu_{k_1, \ldots,k_r})(T)$ is the $r$-component XC-tangle obtained from $T$ by merging the first $k_1$ strands glueing inwards with outwards vertices, then merging the next $k_2$ strands etc. If $k_i=0$, this is to be understood as inserting an edge with rotation number zero (if $k_1=1$, then this procedure does nothing).
\end{itemize}

The following result is fundamental and highlights one of the advantages of working in the setup of XC-tangles as opposed to working with upwards tangles, see \cite[Remark 3.5]{becerra_refined}.

\begin{proposition}\label{prop:generators_TXC}
The category $\TXC$ is generated by the external algebra action and the morphisms
\begin{equation}\label{eq:crossings_and_spinners}
\hspace*{-1.5cm}
\centre{
\labellist \small \hair 2pt
\pinlabel{$X$}  at 350 -78
 \pinlabel{$ X^- $}  at 800 -78
 \pinlabel{$ C$}  at 1140 -78
  \pinlabel{$ C^- $}  at 1560 -78
\endlabellist
\centering
\includegraphics[width=0.6\textwidth]{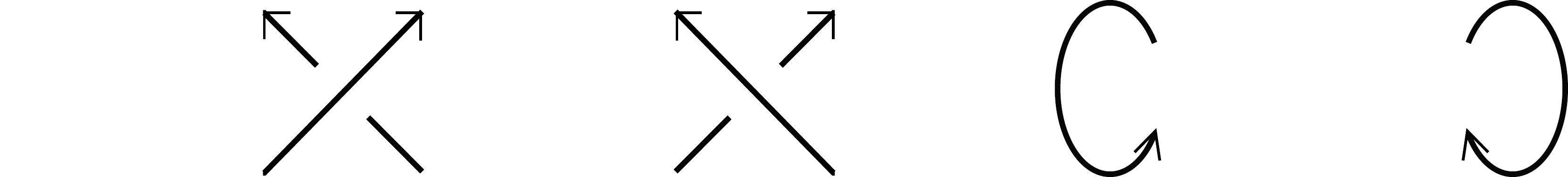}}
\end{equation}
\vspace{8pt}

\noindent subject to the relations $(\widehat{\Omega} 0a)$ -- $(\widehat{\Omega} 3)$, which are seen as morphisms in $\TXC$ choosing an arbitrary but fixed order in the sets of outgoing and incoming univalent vertices.
\end{proposition}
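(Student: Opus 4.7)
The plan is to argue separately for generation and for relations, leveraging that XC-tangle diagrams are combinatorial and that the external algebra action gives us complete freedom to reorder and merge strands.

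\textbf{Generation.} Let $T \in \eend{\TXC}{n}$ be represented by an XC-tangle diagram with $k$ tetravalent vertices of signs $\epsilon_1, \ldots, \epsilon_k$ and $\ell$ edges carrying nonzero rotation numbers with signs $\delta_1, \ldots, \delta_\ell$ (edges with rotation number zero can be ignored as they will arise automatically upon merging). Form the XC-tangle
$$T_0 := X^{\epsilon_1} \otimes \cdots \otimes X^{\epsilon_k} \otimes C^{\delta_1} \otimes \cdots \otimes C^{\delta_\ell} \in \eend{\TXC}{2k+\ell},$$
which is a disjoint union of elementary pieces. The half-edges of $T$ (incident to the tetravalent vertices, the bivalent ``spinner'' vertices, and the univalent endpoints) are in bijection with the outgoing and incoming univalent vertices of $T_0$. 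Choose a permutation $\sigma \in \SS_{2k+\ell}$ that rearranges the strands of $T_0$ so that strands meant to be glued into a common strand of $T$ become consecutive, with the $n$ strands destined to remain open last. A single application $F_{\TXC}(\mu_{a_1, \ldots, a_n})$ for a suitable composition $a_1 + \cdots + a_n = 2k + \ell$ then glues adjacent outgoing-to-incoming endpoints in the pattern prescribed by $T$, producing an XC-tangle diagram combinatorially isomorphic to $T$. This shows $T$ is built from the generators \eqref{eq:crossings_and_spinners}, the composition and monoidal product of $\TXC$, and the external algebra action.

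\textbf{Relations.} Any equality between two morphisms of $\TXC$ obtained as such expressions arises from two XC-tangle diagrams representing the same XC-tangle, which by definition differ by a finite sequence of XC-Reidemeister moves. Each of the moves $(\widehat{\Omega}0a)$--$(\widehat{\Omega}3)$ is precisely a local identity between XC-tangle diagrams whose two sides can themselves be written, via the above procedure, as compositions of the listed generators under the external algebra action (once one fixes an arbitrary order on the outgoing and incoming univalent vertices of the local piece). Hence every relation in $\TXC$ is a consequence of $(\widehat{\Omega}0a)$--$(\widehat{\Omega}3)$, which completes the presentation.

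\textbf{Expected obstacle.} The only nontrivial piece is ensuring that the reordering/merging step really recovers $T$ and not some artefact: one needs that no residual bivalent vertex is left unmerged and that every edge of rotation number zero in $T$ is realised as a plain glueing. The cleanest way to handle this is by induction on $k + \ell$, peeling off one tetravalent vertex or one spinner at a time: remove one half-edge glueing, observe that the remaining graph is still an XC-tangle diagram (with one fewer generator), apply the induction hypothesis, and reattach the removed generator via a single further application of $P_\sigma$ and $\mu_{a_1,\ldots,a_{n'}}$. Alternatively, one can read off $T$ directly from its XC-Gauss diagram via \cref{thm:TXC=GDXC}, using the chord/diamond decorations to dictate the partition $(a_i)$ and the permutation $\sigma$.
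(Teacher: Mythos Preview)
Your argument is correct and follows essentially the same approach as the paper: decompose an XC-tangle into a disjoint union of the elementary pieces $X^\pm$, $C^\pm$ (and possibly $\uparrow$), then reassemble via the external algebra action, with the relations being exactly the XC-Reidemeister moves by definition. One minor imprecision: after merging, the resulting diagram is not literally ``combinatorially isomorphic to $T$'' but rather differs from $T$ by a sequence of $(\widehat{\Omega}0r)$ moves (the gluing introduces extra bivalent vertices), which the paper makes explicit; your ``expected obstacle'' paragraph is therefore unnecessary once this is acknowledged.
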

\begin{proof}
Given an XC-tangle, we can always find a representative XC-tangle diagram $D$ with the property that any two tetravalent vertices in a given strand are connected by at least two edges (that is, there is at least one bivalent vertex between them) in view of $(\widehat{\Omega} 0r)$.  Consider then the XC-tangle diagram $D'$ resulting from splitting $D$ at bivalent vertices.  Then $D'$ must be a disjoint union of the elementary XC-tangle diagrams $X^\pm$, $C^\pm$ and $\uparrow$ the edge with rotation number zero. The ordering of the newly created univalent vertices is of course not unique but needs to be chosen in a coherent way so that $D$ arises from $D'$ after glueing the new univalent vertices. Using the external algebra action we can glue the connected components of $D'$, which yields an XC-tangle diagram $\bar{D}$ which only differs from $D$ by a sequence of $(\widehat{\Omega} 0r)$ moves. This shows that $X^\pm$ and $C^\pm$ are generators, which is enough as the relations are exactly those defining the equivalence of XC-tangle diagrams.
\end{proof}

\subsection{The PROP \texorpdfstring{$\mathsf{XC}$}{XC}  for XC-algebras}

In the following we will make use of the so-called ``PROP for XC-algebras''. We write $\XC$ for the PROP induced by a symmetric monoidal theory with generators $$ \mu: 2 \to 1 \quad , \quad \eta: 0 \to 1  \quad , \quad R^{\pm 1}: 0\to 2 \quad , \quad \kappa^{\pm 1} : 0 \to 1 $$  and equations given by \eqref{eq:algebra_object}, (XC0)--(XC3) and the requirement that  $R^{\pm 1}$ and $\kappa^{\pm 1}$ are multiplicatively inverses of each other. 

If $\C$ is a symmetric monoidal category, let us write $\mathsf{Alg}_\XC(\C)$ for the set of algebras in $\C$ over the PROP $\XC$, that is, the of symmetric  monoidal functors $$A:\XC \to \C.$$
It is readily seen that there is a natural bijection
\begin{equation}\label{eq:AlgXC=alg_objs}
\begin{tikzcd}[column sep=3em]
\mathsf{Alg}_\XC(\C) \ar[-,double line with arrow={-,-}]{r} &     \left\{ \parbox[c]{6em}{\centering     {\small  \textnormal{ XC-algebra objects in $\C$}}} \right\} ,
\end{tikzcd}
\end{equation}
which sends $A:\XC \to \C$ to $A(1)$. In other words, $\XC$-algebras in $\C$ are the same thing as XC-algebras in $\C$ (note the difference in fonts).

\subsection{The virtual category of elements}\label{subsec:vEA}

We will now make use of the PROP $\XC$ for XC-algebras to define certain categories in a very neat way.

Let us introduce some notation first. For $n \geq 0$ we write $\SS_n$ for the symmetric group of $n$ elements. If $(\C, \otimes, \bm{1}, P)$ is a symmetric monoidal category, $\sigma \in \SS_n$ and $X \in \C$, we write $P_{\sigma, X}: X^{\otimes n } \to X^{\otimes n}$ for the symmetry induced by $\sigma$. More precisely, $P_{\sigma, X}$ is determined by the requirement that $$P_{(i,i+1), X} = \id_X^{\otimes (i-1)}\otimes P_{X,X} \otimes  \id_X^{\otimes (n-i-1)}$$ and that $P_{\sigma \tau, X}= P_{\sigma, X} \circ P_{\tau, X}$.

Let $(\C, \otimes, \bm{1}, P)$ be a symmetric monoidal category, and let $A$ be an XC-algebra object in $\C$, that we view as a symmetric strong monoidal functor $A: \XC \to \C$ under \eqref{eq:AlgXC=alg_objs}. We write $v\E(A)$ for the category described as follows: its objects are the nonnegative integers, there are only endomorphisms and
\begin{equation}
\eend{v\E(A)}{n} := A(\hom{\XC}{0}{n}) \times \SS_n.
\end{equation}
The composite law is given by
\begin{equation}\label{eq:composition_vE(A)}
(u, \sigma) \circ (v,\tau):= (P_{\tau^{-1},A}(u) \cdot v \ , \  \sigma \tau) ,
\end{equation}
and the identity of the object $n$ is given by the pair $(\eta_A^{\otimes n}, \id)$. Setting $n\otimes m := n+m$ and $$(u, \sigma) \otimes (v,\xi) := (u \otimes v, \sigma \otimes \xi)$$ defines a strict monoidal structure with unit $0$, where $\sigma \otimes \xi$ denotes the block product of permutations. Furthermore, $v\E(A)$ admits a symmetric braiding given by $P_{n,m} := (\eta_A^{\otimes (n+m)}, \Psi_{n,m})  $ where $\Psi_{n,m} \in \SS_{n+m}$ is the permutation that maps $i$ to $i+m$ (mod $n+m$).

The strict symmetric monoidal category $v \E(A)$ just defined is called the \textit{virtual category of elements} of $A$. In the special case of the XC-algebra object $1$ in $\XC$, that is, the XC-algebra that corresponds with $\id: \XC \to \XC$ under \eqref{eq:AlgXC=alg_objs}, we call the resulting category the\textit{ universal virtual category of elements} of XC-algebras, and we will denote it as $v\E (\XC)$.

Note that any XC-algebra $A$ in $\C$, viewed as a symmetric monoidal functor $A: \XC \to \C$, canonically induces a symmetric monoidal functor
\begin{equation}\label{eq:vEXC->vEA}
v\E (\XC) \to v\E (A).
\end{equation}

We can of course view the virtual category of elements $v\E (A)$ as a bundle monoidal category, and we can define an external algebra action $F_{v\E(A)}$ on it as follows: given $(A(f),\sigma)  \in \eend{v \E(A)}{n} = A(\hom{\XC}{0}{n}) \times \SS_n$ and an arrow $g: n \to k$ in $\mathsf{A}$, we define $F_{v\E(A)}(g)(A(f),\sigma) := (A(g \circ f), \tilde{\sigma})$   , where $\tilde{\sigma}$ is determined by the condition that $\tilde{\sigma}=\sigma \tau^{-1}$ when $g=P_\tau$, and when $g=\mu_{k_1,\ldots, k_r}$ it is given by the removing the corresponding indices and  shifting the rest accordingly.

Given two bundle monoidal categories $\mathcal{M}=(M_i)$ and $\mathcal{N}=(N_i)$ equipped with external algebra actions $F_{\mathcal{M}}$ and $F_{\mathcal{N}}$, we say that a monoidal functor $T: \mathcal{M} \to \mathcal{N}$ which is the identity on objects is \textit{compatible} with the external algebra action if the components $T_i: M_i \to N_i$ of $T$ define a natural transformation $F_{\mathcal{M}} \Longrightarrow F_{\mathcal{N}}$.

The following should be interpreted as an XC-analogue of the Reshetikhin-Turaev theorem:

\begin{theorem}\label{thm:Z_A}
Let $A$ be an XC-algebra object in a symmetric monoidal category $\C$. Then there exists a unique symmetric monoidal functor $$Z_A: \TXC \to v \E (A)  $$ which is the identity on objects, compatible with the external algebra action and
\begin{alignat*}{3}
Z_A(X) &= (R, (12)) \qquad  &, \qquad Z_A(X^-) &= (R_{21}^{-1}, (12))\\ Z_A(C) &= (\kappa^{-1}, \id) \qquad &, \qquad  Z_A(C^-)  &= (\kappa, \id).
\end{alignat*}
Furthermore, this functor is full.
\end{theorem}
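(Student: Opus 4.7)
The statement decomposes into three tasks: uniqueness, well-definedness and fullness. For \emph{uniqueness}, I would invoke Proposition~\ref{prop:generators_TXC}: the category $\TXC$ is generated, as a bundle symmetric monoidal category equipped with an external algebra action, by the four elementary XC-tangles $X^\pm$ and $C^\pm$, modulo the XC-Reidemeister relations $(\widehat{\Omega}0a)$--$(\widehat{\Omega}3)$. A symmetric monoidal functor out of $\TXC$ which is the identity on objects and compatible with the external algebra action is therefore completely determined by its values on $X^\pm$ and $C^\pm$, forcing $Z_A$ to be unique.

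For \emph{existence} the plan is to first set $Z_A$ on the generators as prescribed and on the elements in the image of the external algebra action by declaring $Z_A \circ F_{\TXC}(g) = F_{v\E(A)}(g) \circ Z_A$ for every arrow $g$ of $\mathsf{A}$. Extending via composition and monoidal product gives a candidate on XC-tangle diagrams; the key step is then to check that the assignment descends to XC-tangles, i.e.~that the XC-Reidemeister relations are mapped to identities in $v\E(A)$. This is a case-by-case verification, each move mirroring an XC-algebra axiom: $(\widehat{\Omega}0a)$ and $(\widehat{\Omega}0b)$ translate into (XC0); $(\widehat{\Omega}0r)$ reduces, via the composition law \eqref{eq:composition_vE(A)}, to the fact that the images of $C^\pm$ are inverse elements and that rotation numbers add along a strand; $(\widehat{\Omega}1f)$ is exactly (XC1f); $(\widehat{\Omega}2a)$ and $(\widehat{\Omega}2b)$ are the invertibility of $R$; $(\widehat{\Omega}2c)$ and $(\widehat{\Omega}2d)$ are (XC2c) and (XC2d); and $(\widehat{\Omega}3)$ is the Yang--Baxter equation (XC3). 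The main obstacle in this step is purely bookkeeping: one has to be careful that the permutation part of the composition in $v\E(A)$, together with the conjugation $P_{\tau^{-1},A}(u)$ occurring in \eqref{eq:composition_vE(A)}, correctly reproduces on the algebraic side the re-indexing effects that the passage through crossings has on the diagrammatic side. In essence this is where the axioms of \cite{BH_reidemeister} were designed to fit, so each check should reduce to an instance of an XC-axiom after relabelling tensor factors.

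Checking that $Z_A$ is \emph{symmetric monoidal} then follows from the definitions: the monoidal product is given by disjoint union on both sides, the unit object $0$ is preserved, and the image of the braiding XC-tangle $P_{n,m}$ is $(\eta_A^{\otimes(n+m)},\Psi_{n,m})$ by construction of the external algebra action on $v\E(A)$.

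For \emph{fullness}, given an endomorphism $(u,\sigma) \in \eend{v\E(A)}{n} = A(\hom{\XC}{0}{n}) \times \SS_n$, the PROP $\XC$ is presented by the generators $\mu,\eta,R^{\pm 1},\kappa^{\pm 1}$, so the morphism $u\colon 0 \to n$ factors as a symmetric monoidal composition of these. Each generator admits an explicit preimage: $R^{\pm 1}$ comes from $X$ and $X^-$, $\kappa^{\pm 1}$ from $C^-$ and $C$, and $\mu,\eta$ together with the symmetries of $\XC$ are realised through the external algebra action on $\TXC$ (merging of strands, insertion of trivial edges, permutation of components). Reading this factorisation backwards produces an XC-tangle $T_u$ with $Z_A(T_u) = (u,\id)$. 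Finally, post-composing $T_u$ with the braiding XC-tangle implementing $\sigma$ yields an XC-tangle $T$ with $Z_A(T) = (u,\sigma)$, establishing fullness.
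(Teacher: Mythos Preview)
Your proposal is correct and follows essentially the same approach as the paper: uniqueness via \cref{prop:generators_TXC}, existence by defining $Z_A$ on generators and verifying that the XC-Reidemeister moves map to the XC-algebra axioms, and fullness by reading a presentation of $u$ in $\XC$ backwards into an XC-tangle and then adjusting the permutation via the braiding. Your move-by-move correspondence between $(\widehat{\Omega}0a)$--$(\widehat{\Omega}3)$ and the axioms (XC0)--(XC3), together with the invertibility of $R$ and $\kappa$, is exactly what the paper summarises with the word ``tautologically''.
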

We call the functor $Z_A$ the \textit{universal XC-tangle invariant}.
\begin{proof}
Uniqueness follows directly from \cref{prop:generators_TXC}. For the existence, we construct $Z_A$ as follows: it is the identity on objects and on morphisms it is defined on the generators of $\TXC$ as in the statement. This implies that for an XC-tangle $T$, the second component of $Z_A(T)$ is precisely the unique permutation $\sigma$ making the following diagram of ordered sets and bijections commute,
$$
\begin{tikzcd}
V_{out} \dar \rar & V_{in} \dar \\
\{1, \ldots , n \} \rar{\sigma} & \{1, \ldots , n \}
\end{tikzcd},
$$
where the vertical arrows are the bijections induced by the total orders of the sets of outgoing and incoming univalent vertices and the top horizontal map is the preferred bijection obtained by taking endpoints of each strand. This functor is well-defined on equivalence classes of XC-tangles tautologically: the relations $(\widehat{\Omega}0a)$ -- $(\widehat{\Omega}3)$ are precisely mapped to the relations (XC0) -- (XC3).

Lastly, to check fullness we argue as follows: given a pair $(a,\sigma) \in \mathrm{End}_{v\E(A)}(n)$ with $a:\bm{1} \to A^{\otimes n}$, we first note that the choice of $\sigma$ is irrelevant as we can always obtain a different one using the symmetric braiding of $v\E(A)$. Now by definition $a$ is a composition of tensor products of identities, the symmetric braiding $P_{A,A}$ and the structure morphisms 
 $$ \mu: A^{\otimes 2} \to A \quad , \quad \eta: \bm{1} \to A  \quad , \quad R^{\pm 1}: \bm{1}\to A^{\otimes 2} \quad , \quad \kappa^{\pm 1} : \bm{1} \to A. $$ This gives us guidelines to construct an XC-tangle from copies of $X^\pm$, $C^\pm$ and the edge with rotation number zero, merging the strands using the external algebra action  and changing the order of the sets of univalent vertices with the symmetric braiding.
\end{proof}

In the case of the XC-algebra object 1 in $\XC$, we simply write $$Z: \TXC \to v\E(\XC)$$ for  the symmetric monoidal functor resulting from \cref{thm:Z_A} with values in the universal virtual category of elements for XC-algebras. The name of the latter is explained by the following theorem, which can be viewed as an ``algebraisation'' of the category of XC-tangles.

\begin{theorem}\label{thm:TXC=vEXC}
The functor $$Z: \TXC \toiso v\E(\XC)$$ is an isomorphism of symmetric monoidal categories. Moreover, for any symmetric monoidal category $\C$ and any XC-algebra object $A$, the functor $Z_A$ from \cref{thm:Z_A} factors through $Z$,
$$
\begin{tikzcd}
\TXC \rar{Z_A} \dar{\rotatebox[origin=c]{-90}{$\cong$}}[swap]{Z} & v \E (A) \\
 v\E(\XC) \arrow{ru}
\end{tikzcd}
$$
where the diagonal arrow is the canonical functor from \eqref{eq:vEXC->vEA}.
\end{theorem}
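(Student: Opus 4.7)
The factorisation is immediate from uniqueness. Let $F: v\E(\XC) \to v\E(A)$ denote the canonical symmetric monoidal functor from \eqref{eq:vEXC->vEA}. Both $F$ and $Z$ are the identity on objects and compatible with their respective external algebra actions, hence so is $F \circ Z : \TXC \to v\E(A)$. A direct check on the elementary generators shows $F(Z(X)) = (A(R),(12)) = Z_A(X)$, and similarly for $X^-, C, C^-$. By the uniqueness part of \cref{thm:Z_A} this forces $F \circ Z = Z_A$, which is the desired factorisation.

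For the isomorphism assertion, $Z$ is the identity on objects by construction, and it is full by \cref{thm:Z_A} applied to the XC-algebra object $1 \in \XC$. It therefore suffices to produce a strict symmetric monoidal functor $W: v\E(\XC) \to \TXC$ inverse to $Z$. I plan to build $W$ explicitly by exploiting the PROP presentation of $\XC$: recall that $\XC$ is generated by $\mu, \eta, R^{\pm 1}, \kappa^{\pm 1}$ (together with the symmetry $P_{1,1}$) subject to the algebra axioms, the multiplicative invertibility of $R$ and $\kappa$, and (XC0)--(XC3). Given $(f, \sigma) \in \eend{v\E(\XC)}{n} = \hom{\XC}{0}{n} \times \SS_n$, I would choose any factorisation of $f: 0 \to n$ in these generators, substitute $R \mapsto X$, $R^{-1}\mapsto X^-$, $\kappa^{-1}\mapsto C$, $\kappa\mapsto C^-$, interpret $\mu$ and $\eta$ through the external algebra action on $\TXC$ described just before \cref{prop:generators_TXC}, replace each $P_{1,1}$ by the XC-braiding of $\TXC$, and finally post-compose with the XC-braiding encoding $\sigma$. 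The resulting XC-tangle is the proposed value $W(f,\sigma)$.

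The main obstacle, and the technical heart of the argument, is to verify that $W$ is well-defined on morphisms of $v\E(\XC)$. This reduces to a move-by-move dictionary between the defining relations of the PROP $\XC$ and the XC-Reidemeister moves of \cref{subsec:XC-tangles}: associativity and unit for $\mu, \eta$ together with the multiplicative invertibility of $R^{\pm 1}, \kappa^{\pm 1}$ translate into the compatibility with the external algebra action on $\TXC$ and the move $(\widehat{\Omega}0r)$; axiom (XC0) corresponds to $(\widehat{\Omega}0a)$--$(\widehat{\Omega}0c)$; (XC1f) to $(\widehat{\Omega}1f)$; (XC2c) and (XC2d) to $(\widehat{\Omega}2a)$--$(\widehat{\Omega}2d)$; and (XC3) to $(\widehat{\Omega}3)$. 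This dictionary is precisely dual to the definition of $Z$ on the generators listed in \cref{prop:generators_TXC}: each XC-Reidemeister move is the tangle-theoretic translation of the corresponding PROP relation.

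Once well-definedness of $W$ is secured, strict symmetric monoidality follows directly from the additivity and compatibility of the construction with composition and disjoint union, and the identities $W \circ Z = \id$ and $Z \circ W = \id$ are immediate from inspecting the constructions together with \cref{prop:generators_TXC}. Combined with the factorisation in the first paragraph, this completes the proof.
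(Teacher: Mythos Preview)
Your proposal is correct and rests on the same core observation as the paper: the defining relations of the PROP $\XC$ correspond bijectively to the XC-Reidemeister moves, so the passage between $\TXC$ and $v\E(\XC)$ is lossless. The paper packages this as a one-line faithfulness argument for $Z$ (fullness being already granted by \cref{thm:Z_A}), whereas you spell out the inverse functor $W$ explicitly; these are equivalent, and your version has the virtue of making the dictionary concrete.

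One small bookkeeping correction in that dictionary: the moves $(\widehat{\Omega}2a)$ and $(\widehat{\Omega}2b)$ encode the multiplicative invertibility of $R$, not (XC2c)/(XC2d); the latter pair corresponds only to $(\widehat{\Omega}2c)$ and $(\widehat{\Omega}2d)$. Likewise $(\widehat{\Omega}0r)$ handles only the invertibility of $\kappa$ (and the additivity of rotation numbers), not that of $R$. This does not affect the validity of your argument, only the labelling.
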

\begin{proof}
For the first part is suffices to see that $Z$ is faithful. If $Z(T)=Z(T')$, then $T=T'$ as $\hom{\XC}{0}{n}$ contains the same relations (and no more) as the XC-Reidemeister moves, besides the ordering of the sets of univalent vertices must be essentially the same as the two associated permutations coincide. The commutativity of the diagrams holds by construction.
\end{proof}

\subsection{The subcategory of pure XC-tangles}

We would like to present a mild restriction of the functor $Z_A: \TXC \to v \E (A)  $ that is sometimes useful in calculations. 

An XC-tangle diagram is called \textit{pure} if the bijection  $V_{out} \toiso V_{in}  $ induced by taking endpoints of each strand is an isomorphism of totally ordered sets. Therefore, for pure XC-tangle diagrams, the axiom that imposes a total order on the sets $V_{out}$ and $V_{in}$ can be replaced by an axiom that simply imposes a total order on the set $\mathcal{S}(T)$ of strands. We will write $P\TXC$ for the wide monoidal subcategory of $\TXC$ on pure XC-tangles.

Similarly, we will write $P \mathcal{GD}^{\mathrm{XC}}$ for the wide monoidal subcategory of $\mathcal{GD}^{\mathrm{XC}}$ with $$\eend{P\mathcal{GD}^{\mathrm{XC}}}{n} :=  \mathcal{GD} (X)$$ where $X$ is the (homeomorphism class of the) polarised 1-manifold given by a disjoint union of $n$ oriented intervals with $s(X)=t(X)=+^n$ and the endpoints of each component having the same order. In practice, this means that we simply have to order the components of $X$ in the skeleton of an arrow of $P\mathcal{GD}^{\mathrm{XC}}$.

Lastly, if $A$ is an XC-algebra object in a symmetric monoidal category $\C$, we will write $Pv\E(A)$ for the wide monoidal subcategory of $v\E(A)$ whose arrows have as second component the identity permutation. This means that we can describe the arrows of this category simply as $$\eend{Pv\E(A)}{n} = A(\hom{\XC}{0}{n}) $$ with composite law
\begin{equation*}
u \circ v= u \cdot v
\end{equation*}
for $u,v: \bm{1} \to A^{\otimes n}$ in $A(\hom{\XC}{0}{n})$. We also write $Pv\E(\XC)$ when $A$ is the XC-algebra object 1 in $\XC$.

We record the following result, which is immediate.

\begin{corollary}\label{cor:pure}
\begin{enumerate}[label=(\arabic*)]
\item The isomorphism $\TXC \overset{\cong}{\to}   \mathcal{GD}^{\mathrm{XC}}$   from \cref{thm:TXC=GDXC} restricts to an isomorphism $P\TXC \overset{\cong}{\to}  P \mathcal{GD}^{\mathrm{XC}}.$
\item The symmetric monoidal functor $Z_A: \TXC \to v \E (A)  $  from \cref{thm:Z_A} restricts to a monoidal functor $Z_A: P\TXC \to Pv \E (A)  $.
\item The isomorphism $Z: \TXC \toiso v\E(\XC)$ from \cref{thm:TXC=vEXC} restricts to an isomorphism $Z: P\TXC \toiso Pv\E(\XC)$.
\end{enumerate}
\end{corollary}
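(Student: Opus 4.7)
My plan is to treat the three statements in turn, observing that in each case the restriction is well-defined and behaves as claimed by unwinding the definition of ``pure'' in the corresponding setting; purity always corresponds to a trivial permutation on univalent vertices/endpoints/symmetric factor.

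For part (1), I would first check that \cref{constr:XCT->XCGD} sends a pure XC-tangle diagram $T$ to an arrow of $P\mathcal{GD}^{\mathrm{XC}}$. Indeed, the preferred bijection $\mathcal{S}(T) \toiso \pi_0(X)$ used in the construction identifies $V_{out}$ with $\partial_{bot} X$ and $V_{in}$ with $\partial_{top} X$ as ordered sets, so if the endpoint bijection $V_{out} \toiso V_{in}$ is order-preserving then the two endpoints of each component of $X$ occur in matching positions with respect to the total orders on $\partial_{bot} X$ and $\partial_{top} X$. Conversely, the inverse described in the proof of \cref{thm:TXC=GDXC} turns such an XC-Gauss diagram into a pure XC-tangle, since the total order on $\mathcal{S}(T)$ is read off from that on the components of $X$. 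Hence the isomorphism of \cref{thm:TXC=GDXC} restricts to a bijection on the level of hom-sets, and it is clear that monoidal product and composition preserve purity, so we obtain the desired isomorphism $P\TXC \toiso P\mathcal{GD}^{\mathrm{XC}}$.

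For part (2), I would recall from the proof of \cref{thm:Z_A} that the second component of $Z_A(T)$ is precisely the permutation $\sigma \in \SS_n$ making the square
$$
\begin{tikzcd}
V_{out} \dar \rar & V_{in} \dar \\
\{1, \ldots , n \} \rar{\sigma} & \{1, \ldots , n \}
\end{tikzcd}
$$
commute, where the vertical arrows come from the total orders and the top horizontal is the endpoint bijection. When $T$ is pure, the top arrow is an iso of ordered sets, forcing $\sigma=\id$. Thus $Z_A(T)$ lies in $Pv\E(A)$, and since purity is preserved by composition and disjoint union of XC-tangles (as both operations concatenate ordered sets of univalent vertices), we obtain a well-defined monoidal subfunctor $Z_A: P\TXC \to Pv\E(A)$. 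Note that this restriction is no longer symmetric monoidal, since the braidings $P_{n,m}$ are not pure.

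For part (3), I apply (2) with $A$ being the tautological XC-algebra object $1$ in $\XC$ to get a restricted functor $Z: P\TXC \to Pv\E(\XC)$. To see it is an isomorphism, I would essentially repeat the argument in the proof of \cref{thm:TXC=vEXC}: faithfulness is inherited from $Z$, and for fullness, given $u \in \hom{\XC}{0}{n}$, the construction in the proof of \cref{thm:Z_A} assembles copies of $X^\pm$, $C^\pm$ and the trivial edge using the external algebra action. Since in the pure setting there is no need to reorder the sets of univalent vertices with the symmetric braiding, the resulting preimage is automatically a pure XC-tangle, which completes the verification.
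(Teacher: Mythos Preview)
Your proposal is correct and spells out precisely the verifications that the paper leaves implicit (the paper records the corollary as ``immediate'' and gives no proof). Your identification of ``pure'' with ``trivial permutation component'' is exactly the point.

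One small remark on part (3): your fullness argument is slightly roundabout. Rather than re-running the explicit preimage construction from the proof of \cref{thm:Z_A} and arguing that no reordering is needed, it is cleaner to use directly that $Z$ is already an isomorphism: given $(u,\id)\in Pv\E(\XC)$ there is a unique $T\in\TXC$ with $Z(T)=(u,\id)$, and the commutative square you wrote down in part (2) shows (in both directions) that the second component of $Z(T)$ is the identity if and only if $T$ is pure. This bypasses any worry about whether the intermediate building blocks $X^\pm$ (which are themselves not pure, since $Z_A(X^\pm)$ has second component $(12)$) or the internal uses of the symmetric braiding in the decomposition of $u$ might produce a non-pure preimage before adjustment.
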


The ``snarl diagrams'' that appear in \cite{barnatanveenpolytime} could be seen as pure XC-tangles, after appropriately modifying that definition.

\subsection{Comparison with previous work}

In this subsection we would like to relate the constructions carried out in this section in the context of XC-tangles with some of the results from \cite{becerra_refined}.

Let $\T$ be the category of framed, oriented tangles in a cube, that is, the free strict ribbon category generated by a single object. Recall that we write $\Tup \subset \T$ for the monoidal subcategory on the objects sequences of the sign $+$ (hence non-negative integers) and arrows tangles without closed components. We have therefore monoidal embeddings
\begin{equation}
\Tup \hooklongrightarrow v\Tup \hooklongrightarrow \TXC,
\end{equation}
where the first one is canonical and the second is the embedding from \cref{thm:vTup->TXC}.

In \cite[Theorem 3.10]{becerra_refined} it was shown that the category $\Tup$ enjoys a universal property, namely it is the ``free strict open traced monoidal category generated by a single object''. This means that $\Tup$ is freely generated by a braiding, twist and  an open trace, the latter being a mild modification of Joyal-Street-Verity's notion of trace in a balanced category \cite{joyal_street_traced}.

Moreover, if $A$ is an XC-algebra in $\mathsf{Mod}_\Bbbk$, where $\Bbbk$ is a commutative ring, it was shown in \cite[Theorem 3.14]{becerra_refined} that one can construct an open-traced monoidal category $\mathcal{E}(A)$, called the \textit{category of elements} of $A$. This category has the same objects as $\Tup$ (i.e., non-negative integers) and $$ \eend{\mathcal{E}(A)}{n} \subset A^{\otimes n} \times \SS_n  $$ (this is a set-theoretical inclusion). By the universal property of $\Tup$, there exists a unique strict monoidal functor
\begin{equation}\label{eq:Z_A_up}
Z_A: \Tup \to \E(A)
\end{equation}
that sends $+$ to $1$ and preserves the braiding, twist and open trace; furthermore it is full. This functor is called the \textit{universal upwards tangle invariant}.

The relation between the universal XC- and upwards tangle invariant is explained in the following

\begin{theorem}\label{thm:comparison_Z_As}
Let $A$ be an XC-algebra in $\mathsf{Mod}_\Bbbk$.  There is a monoidal embedding $$ \E(A) \hooklongrightarrow v\E(A) $$
making the following diagram commute:
$$
\begin{tikzcd}
\T^{\mathrm{up}}\dar[hook] \rar[two heads]{Z_A} & \E(A) \dar[hook]\\
\T^{\mathrm{XC}}  \rar[two heads]{Z_A} & v\E(A)  
\end{tikzcd}
$$
\end{theorem}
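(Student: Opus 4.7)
The plan is to construct the embedding $\iota: \E(A) \hookrightarrow v\E(A)$ as the identity on objects and as the set-theoretic inclusion on each hom-set. Indeed, for an XC-algebra $A$ in $\mathsf{Mod}_\Bbbk$, the hom-sets $\eend{\E(A)}{n} \subseteq A^{\otimes n} \times \SS_n$ by construction, while using the natural bijection \eqref{eq:repres_forgetful} one identifies $\eend{v\E(A)}{n} = A(\hom{\XC}{0}{n}) \times \SS_n \cong A^{\otimes n} \times \SS_n$. In particular $\iota$ is well-defined on each hom-set and trivially injective, hence a set-theoretic embedding.

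First I would verify that $\iota$ is a strict monoidal functor. This reduces to checking that the composition and monoidal product in $\E(A)$ from \cite{becerra_refined}, although defined there via the open-trace structure inherited from $\Tup$, agree on the nose with the formulas for $v\E(A)$ from \eqref{eq:composition_vE(A)}. Both composition laws have the shape $(u,\sigma) \circ (v,\tau) = (P_{\tau^{-1},A}(u) \cdot v, \sigma\tau)$ with identity $(\eta^{\otimes n}, \id)$, and the monoidal products are both simply the tensor of the underlying elements with block sum of permutations; a careful unwinding of the two definitions side-by-side yields the claim.

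Next I would verify commutativity of the square on a generating family of $\Tup$. Since $\Tup$ is freely generated as a strict open-traced monoidal category on one object, it suffices to check commutativity on the positive and negative crossings together with the positive and negative full rotations produced by the open trace. Under the composition $\Tup \hookrightarrow v\Tup \overset{I}{\hookrightarrow} \TXC$, with $I$ from \cref{thm:vTup->TXC} applied to the canonical rotational diagrams, these generators are sent respectively to the elementary XC-tangles $X$, $X^-$, $C$ and $C^-$. By \cref{thm:Z_A}, the functor $Z_A: \TXC \to v\E(A)$ sends these to $(R,(12))$, $(R_{21}^{-1},(12))$, $(\kappa^{-1}, \id)$ and $(\kappa, \id)$, which, by inspection of the definition in \cite{becerra_refined}, coincide with the images of the corresponding generators of $\Tup$ under $Z_A: \Tup \to \E(A)$ viewed inside $v\E(A)$ via $\iota$. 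Because both composites in the square are strict monoidal and agree on generators, they agree everywhere.

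The main obstacle is expected to be the careful matching of the two composition laws: $\E(A)$ is presented abstractly via the open-trace structure, whereas $v\E(A)$ is presented directly via the PROP $\XC$ for XC-algebras. Once this translation is made explicit, and in particular the role of the symmetry $P_{\tau^{-1},A}$ acting on the first factor is seen to be the same on both sides, the rest of the argument is a routine verification on generators together with an appeal to \cref{prop:generators_TXC}.
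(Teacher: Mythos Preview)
Your proposal is correct and follows essentially the same approach as the paper: define the embedding as the identity on objects and the set-theoretic inclusion on hom-sets, then observe that the two functors labelled $Z_A$ are built from the same local data. The paper's proof is extremely terse (three sentences, deferring to \cite[Construction 4.12 and 4.13]{becerra_refined} and concluding that commutativity is ``immediate comparing the constructions''), whereas you spell out the verification that the composition laws match and propose checking commutativity on generators via the universal property of $\Tup$; this is a legitimate expansion of what the paper leaves implicit. One small caution: when you conclude ``both composites are strict monoidal and agree on generators, hence agree everywhere'', bear in mind that $X^\pm, C^\pm$ do not generate $\Tup$ monoidally alone---you need compatibility with the external algebra action (equivalently, the open trace), which you do allude to but should state explicitly to close the argument.
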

\begin{proof}
In \cite[Construction 4.12 and 4.13]{becerra_refined} it was defined $\mathrm{End}_{\E(A)}(n) := \mathcal{I}_n$ where  $\mathcal{I}_n$ was a certain monoid defined algebraically. By construction, $\mathcal{I}_n$ is a subset of $A(\hom{\XC}{0}{n}) \times \SS_n =\eend{v\E(A)}{n}$. This inclusion induces the embedding $ \E(A) \hooklongrightarrow v\E(A) $. The commutativity of the diagram is immediate comparing the constructions of the two horizontal functors labelled as $Z_A$.
\end{proof}

The previous theorem expresses that the functor $Z_A: \TXC \to v\E(A)$ from \cref{thm:Z_A} generalises the upwards tangle invariant \eqref{eq:Z_A_up}, in particular it extends \eqref{eq:Z_A_up} to virtual upwards tangles (e.g. to virtual framed long knots).

In fact, even more is true. It $A$ is a ribbon Hopf algebra and  $V$ is a finite-free $A$-module, it was shown in \cite[Theorem 5.4]{becerra_refined} that the universal upwards tangle invariant  $Z_{\mathrm{End}_\Bbbk(V)}$ with respect to the XC-algebra $\mathrm{End}_\Bbbk(V)$ from \cref{ex:XC_ribbon} is essentially the restriction of the Reshetikhin-Turaev invariant $RT_V: \T \to \mathsf{fMod}_A^{\mathrm{str}}$ to the category $\Tup$ of upwards tangles. More precisely, it was proved that there is a monoidal embedding $$\mathcal{E}(\mathrm{End}_\Bbbk (V)) \hooklongrightarrow  \mathsf{fMod}_A^{\mathrm{str}}$$ making the following diagram commute:
$$
\begin{tikzcd}
& \Tup \arrow{dl}[swap]{ Z_{\mathrm{End}_\Bbbk(V)}} \arrow{dr}{RT_V} & \\
\mathcal{E}(\mathrm{End}_\Bbbk(V)) \arrow[hook]{rr} &&  \mathsf{fMod}_A^{\mathrm{str}}
\end{tikzcd}
$$

Combining this with \cref{thm:comparison_Z_As}, we obtain

\begin{corollary}\label{cor:extension_RT}
Let $A$ be a ribbon Hopf algebra and let $V$ be a finite-free $A$-module. Then the universal XC-tangle invariant $$ Z_{\mathrm{End}_\Bbbk(V)}: \TXC \to v\E(\mathrm{End}_\Bbbk(V))  $$ extends the Reshetikhin-Turaev invariant $RT_V: \Tup \to \mathsf{fMod}_A^{\mathrm{str}}$ to XC-tangles.
\end{corollary}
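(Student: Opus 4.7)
The proof is essentially a formal diagram chase pasting together two previously established results. First I would apply \cref{ex:XC_ribbon} to see that the representation $\rho: A \to \mathrm{End}_\Bbbk(V)$ endows $A' := \mathrm{End}_\Bbbk(V)$ with a canonical XC-algebra structure inherited from the ribbon Hopf algebra $A$. Specialising \cref{thm:comparison_Z_As} to this XC-algebra then yields a commutative square
$$
\begin{tikzcd}
\Tup \dar[hook] \rar[two heads]{Z_{A'}} & \E(A') \dar[hook]\\
\TXC \rar[two heads]{Z_{A'}} & v\E(A')
\end{tikzcd}
$$
in which the left vertical arrow is the canonical embedding and the right vertical arrow is the monoidal embedding supplied by the theorem.

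Next I would invoke \cite[Theorem 5.4]{becerra_refined}, which asserts the existence of a monoidal embedding $\E(A') \hookrightarrow \mathsf{fMod}_A^{\mathrm{str}}$ through which $Z_{A'}: \Tup \to \E(A')$ is identified with the restriction of the Reshetikhin-Turaev functor $RT_V$ to upwards tangles. Pasting this triangle to the square above along the common arrow $Z_{A'}: \Tup \to \E(A')$ produces a commutative diagram in which the restriction of $Z_{\mathrm{End}_\Bbbk(V)}: \TXC \to v\E(\mathrm{End}_\Bbbk(V))$ along the canonical embedding $\Tup \hookrightarrow \TXC$ coincides with $RT_V$ via the chain of embeddings $\E(A') \hookrightarrow v\E(A')$ and $\E(A') \hookrightarrow \mathsf{fMod}_A^{\mathrm{str}}$. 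This is precisely the required extension of $RT_V$ to XC-tangles, and corresponds to the front face of the commutative cube displayed just before the corollary.

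The argument is entirely formal once the two inputs are at hand, so there is no serious computational obstacle. The main point requiring care is notational bookkeeping: one must verify that the description of $\eend{\E(A')}{n}$ as a subset of $A'(\hom{\XC}{0}{n}) \times \SS_n$ used in the proof of \cref{thm:comparison_Z_As} is compatible with the description of $\E(A')$ employed in \cite[Theorem 5.4]{becerra_refined}, so that the two monoidal embeddings out of $\E(A')$ genuinely fit together. This compatibility is guaranteed by construction, since both descriptions ultimately derive from \cite[Construction 4.12 and 4.13]{becerra_refined}, and thus no further argument beyond tracing definitions is needed.
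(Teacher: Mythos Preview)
Your proposal is correct and follows essentially the same approach as the paper: the corollary is stated immediately after the paragraph recalling \cite[Theorem 5.4]{becerra_refined} with the single line ``Combining this with \cref{thm:comparison_Z_As}, we obtain'', and your argument is precisely this combination spelled out in detail. One minor remark: the commutative cube you allude to actually appears \emph{after} the corollary (in \cref{cor:big_diag}), not before it, but this does not affect the substance of your argument.
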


\subsection{Comparison with the universal property of $v\T$}\label{subsec:brochier}

Given a finite-free $A$-module $V$ of a ribbon Hopf algebra $A$, \cref{cor:extension_RT} produces an invariant of virtual upwards tangles (precomposing with $I: v\Tup \hooklongrightarrow \TXC$) that extends the Reshetikhin-Turaev invariant $RT_V$.  In this subsection we would like to compare this invariant with an invariant $\widetilde{RT}_V:v \Tup \to \mathsf{fMod}_\Bbbk^{\mathrm{str}}$ that canonically extends the Reshetikhin-Turaev invariant. 

We start by recalling the universal property of the category $v\T$ of virtual framed tangles from \cite{brochier}, that we slightly adapt for convenience. Let $\C$ be a strict ribbon category, $V$ an object in $\C$ and $\mathcal{S}$ a strict symmetric monoidal category. If $G: \C \to \mathcal{S}$ is a strict monoidal functor,  there exists a unique strict symmetric monoidal functor such that $G_V(+)=V$ and $G_V$ extends the composite $ G \circ RT_V$,
$$
\begin{tikzcd}
v\T \arrow[dashed]{drr}{G_V}  & & \\
\T \arrow[hook]{u}  \rar{RT_V} & \C \rar{G \ \ } & \mathcal{S}
\end{tikzcd}
$$
In particular for $\C= \mathsf{fMod}_A^{\mathrm{str}}$, $\mathcal{S}= \mathsf{fMod}_\Bbbk^{\mathrm{str}}$   and $G$ the canonical forgetful functor, we obtain a natural extension $$\widetilde{RT}_V := G_V : v\T \to \mathsf{fMod}_\Bbbk^{\mathrm{str}} $$ of the Reshetikhin-Turaev functor. We now would like to compare the restriction $$\widetilde{RT}_V: v\Tup \to \mathsf{fMod}_\Bbbk^{\mathrm{str}} $$ of this functor to $v\Tup$ (that we still denote in the same way) with the restriction $$ Z_{\mathrm{End}_\Bbbk(V)}: v\Tup \to v\E(\mathrm{End}_\Bbbk(V))  $$ of the functor from \cref{cor:extension_RT} to $ v\Tup $ as well. 

As we mentioned, these functors are canonical extensions of the Reshetikhin-Turaev functor, and in fact we will see that both are essentially the same invariant.

\begin{construction}
Let $A$ be a ribbon Hopf algebra and let $V$ be a finite-free $A$-module. We can consider a monoidal embedding $$\iota_V :v\mathcal{E}(\mathrm{End}_\Bbbk (V)) \hooklongrightarrow  \mathsf{fMod}_\Bbbk^{\mathrm{str}}$$
in a completely analogous way as that in \cite[Construction 5.3]{becerra_refined}. For simplicity we use the identification $\hom{\Bbbk}{\Bbbk}{M} \cong M$ from \cref{ex:usual_XC_alg}. On objects, declare $\iota_V (n) := V^{\otimes n}$. On arrows, given a morphism $$\left( \sum f_1 \otimes \cdots \otimes f_n, \sigma \right) : n \to n$$ in $v\mathcal{E}(\mathrm{End}_\Bbbk (V))$, set $$ \iota_V \left( \sum f_1 \otimes \cdots \otimes f_n, \sigma \right) := \sum \sigma_*(f_1 \otimes \cdots \otimes f_n)  : V^{\otimes n} \to V^{\otimes n},$$
where we view each $f_1 \otimes \cdots \otimes f_n$ in the right-hand side as an element of $\mathrm{End}_\Bbbk (V^{\otimes n})$ via the canonical isomorphism $$\mathrm{End}_\Bbbk (V)^{\otimes n} \toiso \mathrm{End}_\Bbbk (V^{\otimes n}),$$ and $\sigma_* : V^{\otimes n} \to V^{\otimes n}$ permutes the factors. That $\iota_V$ is indeed a functor and preserves the monoidal product follows verbatim from the computations in \cite[Construction 5.3]{becerra_refined}. In fact by construction we have the following commutative diagram,
\begin{equation}\label{eq:square_constr}
\begin{tikzcd}
\mathcal{E}(\mathrm{End}_\Bbbk (V)) \arrow[hook]{d} \arrow[hook]{r} &  \mathsf{fMod}_A^{\mathrm{str}} \arrow[hook]{d} \\
v\mathcal{E}(\mathrm{End}_\Bbbk (V)) \arrow[hook]{r} &  \mathsf{fMod}_\Bbbk^{\mathrm{str}}
\end{tikzcd}
\end{equation}
where the top horizontal functor is the one described in \cite[Construction 5.3]{becerra_refined}, the left vertical functor is the one described in \cref{thm:comparison_Z_As} and the right vertical functor is the forgetful.
\end{construction}

The following theorem should be viewed as the virtual analogue of \cite[Theorem 5.4]{becerra_refined}.

\begin{theorem}\label{thm:comparison_Brochier}
Let $A$ be a ribbon Hopf algebra and let $V$ be a finite-free $A$-module. Then we have the following commutative diagram:
$$
\begin{tikzcd}
& v\Tup \arrow{dl}[swap]{ Z_{\mathrm{End}_\Bbbk(V)}} \arrow{dr}{\widetilde{RT}_V} & \\
v\mathcal{E}(\mathrm{End}_\Bbbk(V)) \arrow[hook]{rr}{\iota_V} &&  \mathsf{fMod}_\Bbbk^{\mathrm{str}}
\end{tikzcd}
$$
That is, up to the embedding $\iota_V$, the functors $Z_{\mathrm{End}_\Bbbk(V)}$ and  $\widetilde{RT}_V$ coincide.
\end{theorem}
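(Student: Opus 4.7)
The plan is to exploit the fact that both functors under comparison are strict symmetric monoidal maps $v\Tup \to \mathsf{fMod}_\Bbbk^{\mathrm{str}}$, and then to reduce the equality to their behaviour on a convenient generating family.

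First I would verify symmetric monoidality of both functors. The functor $\widetilde{RT}_V$ is strict symmetric monoidal by construction via Brochier's universal property of $v\T$. For the other side, I would argue that the composite $\iota_V \circ Z_{\mathrm{End}_\Bbbk(V)}$ is strict symmetric monoidal factor by factor: $Z_{\mathrm{End}_\Bbbk(V)}$ is symmetric monoidal by \cref{thm:Z_A}, and $\iota_V$ is symmetric monoidal because the symmetric braiding $P_{n,m}=(\eta^{\otimes n+m},\Psi_{n,m})$ of $v\mathcal{E}(\mathrm{End}_\Bbbk(V))$ is sent under $\iota_V$ to $(\Psi_{n,m})_*: V^{\otimes n+m}\to V^{\otimes n+m}$, which is precisely the symmetric braiding in $\mathsf{fMod}_\Bbbk^{\mathrm{str}}$. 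Restricting to $v\Tup$ via the embedding $I: v\Tup \hookrightarrow \TXC$ (which itself sends a virtual crossing to the symmetric braiding of $\TXC$, hence is symmetric monoidal) completes this check.

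Next, I would reduce to a comparison on generators. Using the existence of rotational diagrams for virtual upwards tangles (\cref{lem:vT_rot_diagrams}), one can decompose any arrow of $v\Tup$ into compositions and monoidal products of the real crossings $X^\pm$, the full rotations $C^\pm$ (viewed as pairs of cup-cap in the upwards tangle), and the virtual crossing (which is exactly the symmetric braiding of $v\Tup$). The real generators $X^\pm$ and $C^\pm$ already live in the subcategory $\Tup$. Hence, combined with symmetric monoidality, it suffices to check agreement of the two functors on arrows of $\Tup$ and on the symmetric braiding.

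For the $\Tup$ part I would invoke \cite[Theorem 5.4]{becerra_refined} which yields $\iota_V^{\mathrm{classical}} \circ Z_{\mathrm{End}_\Bbbk(V)} = RT_V$ as functors $\Tup \to \mathsf{fMod}_A^{\mathrm{str}}$, and then feed this through the commutative square \eqref{eq:square_constr}: the outer composition equals $G \circ RT_V|_{\Tup}$, which by definition of $\widetilde{RT}_V$ is exactly the restriction of $\widetilde{RT}_V$ to $\Tup$. For the symmetric braiding, both functors being strict symmetric monoidal and identical on objects force the virtual crossing to be mapped to the swap map on $V^{\otimes 2}$, so agreement is automatic.

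The main obstacle I anticipate is the generation argument: one needs to argue cleanly that a rotational diagram admits a decomposition in which every virtual crossing appears as an instance of the symmetric braiding of $v\Tup$, and that this respects the rotational Reidemeister moves used to define $v\Tup$. An alternative, possibly cleaner route, would be to extend $\iota_V \circ Z_{\mathrm{End}_\Bbbk(V)} \circ I$ to a strict symmetric monoidal functor defined on all of $v\T$ (the target $\mathsf{fMod}_\Bbbk^{\mathrm{str}}$ supports cups and caps without issue) and then apply Brochier's universal property of $v\T$ directly, which would collapse the argument into uniqueness of the extension of $G \circ RT_V: \T \to \mathsf{fMod}_\Bbbk^{\mathrm{str}}$.
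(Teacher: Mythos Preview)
Your proposal is correct and follows essentially the same route as the paper's own proof: verify agreement on non-virtual upwards tangles via \cite[Theorem 5.4]{becerra_refined} (together with the square \eqref{eq:square_constr}), and then check the virtual crossing separately, where both functors send it to the symmetric braiding of $\mathsf{fMod}_\Bbbk^{\mathrm{str}}$. The paper's proof is in fact more terse than yours --- it leaves the generation argument and the symmetric-monoidality verifications implicit --- so your extra care about those points is welcome but not strictly a departure in strategy.
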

\begin{proof}
For a given non-virtual upwards tangle $T$, we have by \cite[Theorem 5.4]{becerra_refined} $$\widetilde{RT}_V(T)=  RT_V(T) = \iota_V(Z_{\mathrm{End}_\Bbbk(V)}(T)).$$
On the other hand, if $X_v$ denotes a virtual crossing, then $\widetilde{RT}_V(X_v)$ is exactly the symmetric braiding of $\mathsf{fMod}_\Bbbk^{\mathrm{str}}$, and under $\iota_V$ this is exactly the same as $Z_{\mathrm{End}_\Bbbk(V)}(T)$.
\end{proof}

All in all, we obtain the following commutative cube relating the different constructions that have appeared (compare with \cite[Corollary 5.5]{becerra_refined}):
 
\begin{corollary}\label{cor:big_diag}
We have the following commutative cube of strict monoidal functors and strict monoidal categories:
\begin{equation*}
\begin{tikzcd}
             & v\T \arrow[rr, "\widetilde{RT}_V "]  \arrow[from=dd, hook]      &                                                              & \mathsf{fMod}_\Bbbk^{\mathrm{str}}        \\
v\Tup \arrow[ru, hook] \arrow[rr,pos=0.8, "Z_{\mathrm{End}_\Bbbk(V)}",crossing over]                 &                                             & v\E (\mathrm{End}_\Bbbk(V)) \arrow[ru,  hook,"\iota_V"]         &                                           \\
 & \T \arrow[rr, pos=0.3, "RT_V"] &                                                              & \mathsf{fMod}_A^{\mathrm{str}} \arrow[uu] \\
\Tup \arrow[uu, hook] \arrow[rr, "Z_{\mathrm{End}_\Bbbk(V)}"] \arrow[ru, hook] &                                             & \E (\mathrm{End}_\Bbbk(V)) \arrow[uu, hook,crossing over] \arrow[ru, hook] &                                          
\end{tikzcd}
\end{equation*}
\end{corollary}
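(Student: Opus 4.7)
The plan is to verify that each of the six faces of the cube commutes; once this is done, the commutativity of the cube is automatic. All the necessary ingredients have already been established, so the argument will reduce to matching each face against a previously proven square.

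I would begin with the two "trivial" faces. The \emph{front} face, consisting entirely of the canonical inclusions $\Tup \hookrightarrow \T$, $\Tup \hookrightarrow v\Tup$, $\T \hookrightarrow v\T$ and $v\Tup \hookrightarrow v\T$, commutes on the nose because all four functors are identity-on-objects inclusions of subcategories of (virtual) tangle diagrams; both compositions simply view an upwards non-virtual tangle inside $v\T$. The \emph{back} face, involving $\E(\mathrm{End}_\Bbbk(V)) \hookrightarrow v\E(\mathrm{End}_\Bbbk(V))$, $\E(\mathrm{End}_\Bbbk(V)) \hookrightarrow \mathsf{fMod}_A^{\mathrm{str}}$, $\iota_V$ and the forgetful functor $\mathsf{fMod}_A^{\mathrm{str}} \to \mathsf{fMod}_\Bbbk^{\mathrm{str}}$, is precisely the commutative square \eqref{eq:square_constr} that appeared while constructing $\iota_V$.

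Next I would address the two "horizontal" faces. The \emph{bottom} face is exactly \cite[Corollary 5.5]{becerra_refined} (equivalently, the combination of \cite[Theorem 5.4]{becerra_refined} with the comparison of $Z_{\mathrm{End}_\Bbbk(V)}$ and $RT_V$ on $\Tup$), and the \emph{top} face is \cref{thm:comparison_Brochier}. For the two "vertical" faces, the \emph{left} face is precisely the content of \cref{thm:comparison_Z_As}, while the \emph{right} face commutes by the defining universal property of $\widetilde{RT}_V$ recalled in \cref{subsec:brochier}: $\widetilde{RT}_V$ is by construction the unique strict symmetric monoidal extension of the composite $\T \xrightarrow{RT_V} \mathsf{fMod}_A^{\mathrm{str}} \to \mathsf{fMod}_\Bbbk^{\mathrm{str}}$ to $v\T$, so the restriction of $\widetilde{RT}_V$ to $\T$ equals this composite.

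Having verified all six faces, the cube commutes. Since this argument is essentially a bookkeeping exercise assembling previously established squares, I do not foresee any genuine obstacle; the only mild care required is keeping track of the different targets $\mathsf{fMod}_A^{\mathrm{str}}$ and $\mathsf{fMod}_\Bbbk^{\mathrm{str}}$ and remembering that $\iota_V$, not $\iota_V$ composed with the forgetful, is the map landing in $\mathsf{fMod}_\Bbbk^{\mathrm{str}}$.
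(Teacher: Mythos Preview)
Your proof is correct and follows essentially the same strategy as the paper's own proof: verify each of the six faces of the cube using the previously established results. The only difference is cosmetic—you and the paper label the faces differently (your ``front/back'' are the paper's ``left/right'' and vice versa), but the attribution of each square to the relevant result (\cref{thm:comparison_Z_As}, \cref{thm:comparison_Brochier}, the universal property of $v\T$, \eqref{eq:square_constr}, \cite[Theorem 5.4]{becerra_refined}, and the square of canonical inclusions) matches exactly.
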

\begin{proof}
The commutativity of the base is \cite[Theorem 5.4]{becerra_refined}, and the left face commutes as the functors involved are all the natural inclusions. The commutativity of the front face  is a direct consequence of \cref{thm:comparison_Z_As}, and the back square commutes by the universal property of $v\T$. Finally the top of the cube is precisely \cref{thm:comparison_Brochier}, and the right face \eqref{eq:square_constr}.
\end{proof}

\section{Perspectives}

We would like to conclude with a list of future directions and open problems, that will be the subject of forthcoming publications.

\subsection{A topological model for the PROP \texorpdfstring{$\mathsf{XC}$}{XC}  }

In \cref{thm:TXC=vEXC} we used the PROP $\XC$ for XC-algebras to construct an algebraic model for the category $\TXC$ of XC-tangles. We claim that the converse is also true, using pure XC-tangles.

Recall that if $\C$ is an ordinary (locally small) category, an object $X$ in $\C$ is a \textit{separator} if the representable functor $$\hom{\C}{X}{-} : \C \to \mathsf{Set}$$ is faithful.

\begin{conjecture}\label{conj:the_conj}
The object $0$ is a separator of $\XC$.
\end{conjecture}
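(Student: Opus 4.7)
The plan is to prove the conjecture by showing that the assignment
\[ \hom{\XC}{m}{n} \longrightarrow \mathsf{Set}\bigl(\hom{\XC}{0}{m}, \hom{\XC}{0}{n}\bigr), \qquad f \longmapsto (h \mapsto f \circ h) \]
is injective for all $m,n \geq 0$. Equivalently, given $f, g : m \to n$ with $f \circ h = g \circ h$ for every $h : 0 \to m$, I need to show $f = g$ using only the generators and relations of $\XC$. The first step is to leverage the topological identification $\TXC \overset{\cong}{\to} v\E(\XC)$ of \cref{thm:TXC=vEXC}: this gives diagrammatic access to the set $\hom{\XC}{0}{k}$ as endomorphisms of $k$ in $\TXC$ (modulo the $\SS_k$-action). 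Hence the ``test arrows'' $h$ can be picked among XC-tangles, where the structural relations $(\widehat{\Omega} 0a)$–$(\widehat{\Omega} 3)$ are visible.

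The central construction I would attempt is a \emph{probing element} $\iota_m : 0 \to m$ for each $m$, built out of the PROP generators so that composition with $\iota_m$ is injective on $\hom{\XC}{m}{-}$. A natural candidate, motivated by the role of $R$ as a ``universal element of $A \otimes A$'', is
\[ \iota_m := R_{1,2}\cdot R_{2,3} \cdots R_{m-1,m}, \]
possibly padded with $\kappa$-factors or $\eta$-factors to regularise boundary cases. The idea is that, since $R$ is multiplicatively invertible, one should be able to ``strip away'' $\iota_m$ from $f \circ \iota_m$, recovering $f$ up to the XC-relations. Concretely, if $f \circ \iota_m = g \circ \iota_m$, then right-multiplying by $\iota_m^{-1}$ (a morphism built from $R^{-1}, \kappa^{-1}, \eta$) and applying the braid-like axioms (XC3), (XC0), one hopes to deduce $f = g$ by induction on the complexity of a generator-decomposition of $f$ and $g$.

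The main obstacle, however, is that unlike the case $m = 0$ which is modelled perfectly by $\TXC$, there is no clean topological or combinatorial model for $\hom{\XC}{m}{n}$ with $m > 0$. Making the ``stripping'' above rigorous requires either a normal form for morphisms $m \to n$ in $\XC$ or a topological enlargement of $\TXC$ that captures all of $\XC$, not just the $(0,n)$-corner. Producing such a normal form is the genuinely hard step, and I expect it to require understanding how tensor products of the generators $\mu$ interact with braid words in $R^{\pm 1}$ through the relations (XC0)–(XC3).

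An alternative, more abstract route is Tannakian: try to prove that the family of XC-algebras in module categories $\mathsf{Mod}_\Bbbk$ arising from finite-free representations of ribbon Hopf algebras (cf.\ \cref{ex:XC_ribbon} and \cref{cor:extension_RT}) is jointly faithful on $\XC$, that is, $f = g$ in $\XC$ iff $A(f) = A(g)$ for every such XC-algebra $A$. Combined with the fullness of the Reshetikhin–Turaev functor and classical separation results for quantum groups, this would reduce the conjecture to a separation statement for universal invariants. The difficulty here is showing that ``enough'' XC-algebras exist to separate all of $\XC$; this essentially amounts to identifying $\XC$ with the free symmetric monoidal category on the collected data of ribbon Hopf algebras, a statement that is itself likely as hard as the original conjecture.
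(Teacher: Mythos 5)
This statement is left as an open conjecture in the paper: the author explicitly states that a proof is beyond the scope of the article and would require a theory of normal forms for $\XC$ together with, for each pair $f \neq g$, the construction of a separating arrow $u: 0 \to n$. Your proposal is likewise not a proof but a strategy sketch, and it honestly flags its unresolved steps; still, two of its central ideas have concrete structural problems beyond mere incompleteness. First, the ``stripping'' argument conflates the convolution-type product $u \cdot v = \mu^{[m]}(u \otimes v)$ on $\hom{\XC}{0}{m}$ with categorical composition: multiplicative invertibility of $\iota_m$ lets you cancel $\iota_m$ inside that product, but a general arrow $f: m \to n$ of $\XC$ (one that inserts copies of $R^{\pm 1}$ or $\kappa^{\pm 1}$ between applications of $\mu$) is not a homomorphism for these products, so $f \circ \iota_m = g \circ \iota_m$ cannot be ``right-multiplied by $\iota_m^{-1}$'' to yield $f = g$. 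Relatedly, reducing the separator property to a single universal probe $\iota_m$ is a strictly stronger claim than the conjecture (which only asks that \emph{some} $h$ separates each pair), and you give no reason why one fixed element should detect all differences; already for $f = \mu$ and $g = \mu \circ P_{1,1}$ the probe $R$ tests only whether $\sum_i \alpha_i\beta_i \neq \sum_i \beta_i\alpha_i$ holds in $\XC$, which is precisely the kind of inequality in a PROP that the paper identifies as the genuinely hard point.

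Second, the Tannakian route points in the wrong logical direction. Even granting that finite-free representations of ribbon Hopf algebras yield a jointly faithful family of symmetric monoidal functors $A: \XC \to \mathsf{Mod}_\Bbbk$, this would show that \emph{those functors} separate arrows of $\XC$, not that the object $0$ does: from $f \circ h = g \circ h$ for all $h: 0 \to m$ you only learn that $A(f)$ and $A(g)$ agree on the subset $A(\hom{\XC}{0}{m})$ of $A^{\otimes m}$, which need not exhaust the elements of $A^{\otimes m}$ (equivalently, the maps $\Bbbk \to A^{\otimes m}$) needed to conclude $A(f) = A(g)$ in $\mathsf{Mod}_\Bbbk$. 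So joint faithfulness of such a family neither implies nor is implied by the conjecture without substantial additional input. In short, the conjecture remains open, and neither branch of your plan closes the gap the paper itself identifies.
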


Giving a proof of the above conjecture is a highly non-trivial task, beyond the scope of this paper. It would require to investigate  not only a theory of normal forms for $\XC$ but also --and most importantly-- to construct for every pair $f,g: n \to m$ with $f \neq g$ an arrow $u: 0 \to n$ such that $f \circ u \neq g \circ u$. Showing inequalities as the latter in a PROP is a very difficult problem in general.

Yet let us explain the relevance that this conjecture would have. First, observe that the isomorphism $Z: P\TXC \toiso Pv\E(\XC)$ from \cref{cor:pure}.(3) induces bijections $$ \hom{\XC}{0}{n} = \eend{Pv\E(\XC)}{n} \cong  \eend{P\TXC}{n} =: P\TXC_n. $$ Under these bijections, the representable functor $\hom{\XC}{0}{-}$ can be viewed as a functor $$F: \XC \to \mathsf{Set} \qquad , \qquad n \mapsto   P\TXC_n.$$ The maps $P\TXC_n \to P\TXC_m$ in the image of this functor obtain a topological meaning: in generators, the multiplication $\mu: 2 \to 1 $ induces a map $$ \widetilde{\mu}: P\TXC_2 \to P\TXC_1 $$ that merges the components of every 2-component pure XC-tangle, according to the order and orientations. The arrows $\eta: 0 \to 1 $, $R^{\pm 1}: 0 \to 2$ and $\kappa^{\pm 1}: 0 \to 1$ induce  maps $$ \widetilde{\eta} :* \to P\TXC_1   \qquad , \qquad  \widetilde{R^{\pm 1}}: * \to P\TXC_2   \qquad , \qquad  \widetilde{\kappa^{\pm 1}}:  * \to P\TXC_1   $$
from the one-point set $*\cong \hom{\XC}{0}{0}$ that simply pick the trivial edge, the crossing $X^\pm$ and the full rotation $C^{\mp}$, respectively. The symmetric braiding induces a map  $ P\TXC_2 \to  P\TXC_2 $ that simply exchanges the order of the strands. In general, any map $P\TXC_n \to P\TXC_m$ in the image of this functor will be a combination of these maps. Therefore we can define a category $\widetilde{P\TXC}$ whose objects are non-negative integers and with $$ \hom{\widetilde{P\TXC}}{n}{m} := F(\hom{\XC}{n}{m}). $$ The composition and identities are taken from those in $\XC$.

\begin{corollary}
If \cref{conj:the_conj} holds, then there is an isomorphism of categories $$\XC \toiso \widetilde{P\TXC},$$ which can in fact be promoted to an isomorphism of PROPs.
\end{corollary}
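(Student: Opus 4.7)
The plan is to construct the candidate functor
$$\Phi:\XC \longrightarrow \widetilde{P\TXC}, \qquad n \longmapsto n, \qquad f \longmapsto F(f),$$
and to show that, under \cref{conj:the_conj}, $\Phi$ is an isomorphism of ordinary categories, which then upgrades to a PROP isomorphism essentially for free.

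First I would make explicit the fact, implicit in the constructions preceding \cref{conj:the_conj}, that the functor $F$ coincides with the representable $\hom{\XC}{0}{-}: \XC \to \mathsf{Set}$ up to natural isomorphism. The natural isomorphism is supplied by $Z$ from \cref{cor:pure}.(3), giving the bijection $P\TXC_n \toiso \hom{\XC}{0}{n}$. Naturality amounts to checking, on generators of $\XC$, that the geometric operations $\widetilde{\mu}, \widetilde{\eta}, \widetilde{R^{\pm 1}}, \widetilde{\kappa^{\pm 1}}$ match post-composition by the corresponding generators. For instance, $\widetilde{\mu}$ merges the two components of a pure XC-tangle, which under $Z$ corresponds to multiplying the two associated arrows $0 \to 1$ in $\XC$, i.e.\ post-composition with $\mu: 2 \to 1$; the other generators are analogous.

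With this identification in hand the proof is short. The functor $\Phi$ is the identity on objects and lands by construction in $F(\hom{\XC}{n}{m}) = \hom{\widetilde{P\TXC}}{n}{m}$, so it is full tautologically. Faithfulness translates, via the natural isomorphism $F \cong \hom{\XC}{0}{-}$, into the statement that $\hom{\XC}{0}{-}$ is faithful, which is exactly \cref{conj:the_conj}. We therefore obtain an isomorphism $\Phi: \XC \toiso \widetilde{P\TXC}$ of ordinary categories.

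To promote $\Phi$ to a PROP isomorphism, I would transport the strict symmetric monoidal structure of $\XC$ along $\Phi$ by setting $F(f) \otimes F(g) := F(f \otimes g)$ and declaring the symmetric braiding to be the image of the one in $\XC$. Well-definedness is automatic from the faithfulness already established, and the axioms of a strict symmetric monoidal category are inherited formally from $\XC$. The real obstacle throughout is, unsurprisingly, \cref{conj:the_conj} itself: separating two formally distinct arrows $f, g: n \to m$ of $\XC$ by a test element $u: 0 \to n$ is a word-problem-type statement for a PROP with the intricate relation set (XC0)--(XC3), and a proof would presumably require building sufficiently rich families of XC-algebras (for instance, from ribbon Hopf algebras as in \cref{ex:XC_ribbon}) to detect any such inequality.
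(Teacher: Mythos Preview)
Your argument is correct and is exactly the intended one. The paper states the corollary without proof, treating it as immediate from the definitions: the category $\widetilde{P\TXC}$ is \emph{defined} so that $\Phi: f \mapsto F(f)$ is a surjection on hom-sets with composition inherited from $\XC$, and since $F$ is (by construction) the representable $\hom{\XC}{0}{-}$ transported along the bijections $Z$, faithfulness of $\Phi$ is literally the content of \cref{conj:the_conj}. Your spelling-out of the naturality check and the transport of the symmetric monoidal structure is more explicit than the paper, but there is no alternative route here.
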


\subsection{Finite type invariants of XC-tangles}\label{sec:fti}

XC-tangles admit a theory of finite type invariants that is very closely related to that for framed upwards tangles and virtual tangles. For simplicity we will restrict to the set $\mathcal{K}^{\mathrm{XC}} := \eend{\TXC}{1}$ of one-component XC-tangles or \textit{XC-knots}.

The isomorphism of \cref{thm:TXC=GDXC} restricts to a bijection $$\mathcal{K}^{\mathrm{XC}} \toiso \mathcal{GD}^{\mathrm{XC}}(\uparrow), $$
where $\mathcal{GD}^{\mathrm{XC}}(\uparrow)$ is the quotient of the set $\mathsf{GD}^{\mathrm{XC}}_{\mathrm{diag}}$ of XC-Gauss diagrams on an oriented interval modulo the relations $(\widehat{G} 0)$ -- $(\widehat{G} 3)$ displayed in \cref{sec:XC-Gauss}.

Let us fix $\Bbbk$ a commutative ring with unit, and write $\Bbbk \mathsf{GD}^{\mathrm{XC}}_{\mathrm{diag}}$ for the free $\Bbbk$-module spanned by $\mathsf{GD}^{\mathrm{XC}}_{\mathrm{diag}}$. There is a $\Bbbk$-linear isomorphism
\begin{equation}\label{eq:I}
I: \Bbbk \mathsf{GD}^{\mathrm{XC}}_{\mathrm{diag}} \toiso \Bbbk \mathsf{GD}^{\mathrm{XC}}_{\mathrm{diag}}
\end{equation}
that maps an XC-Gauss diagram to the sum of all its subdiagrams. This isomorphism descents to another isomorphism 
\begin{equation}\label{eq:I2}
I: \frac{ \Bbbk \mathsf{GD}^{\mathrm{XC}}_{\mathrm{diag}} }{(\widehat{G} 0)-(\widehat{G} 3)}\toiso \frac{ \Bbbk \mathsf{GD}^{\mathrm{XC}}_{\mathrm{diag}} }{I(\widehat{G} 0)-I(\widehat{G} 3)}
\end{equation}
where $I(\widehat{G} 0)$ -- $I(\widehat{G} 3)$ denote the images of the moves $(\widehat{G} 0)$ -- $(\widehat{G} 3)$ under \eqref{eq:I}. We call the right-hand side of the isomorphism \eqref{eq:I2} the \textit{XC-Polyak algebra}, and will denote it by $\mathcal{P}^{\mathrm{XC}}$. Note we then have a chain of $\Bbbk$-linear isomorphisms
\begin{equation}\label{eq:chain_isos}
\Bbbk \mathcal{K}^{\mathrm{XC}} \toiso \Bbbk \mathcal{GD}^{\mathrm{XC}}(\uparrow) \toiso \mathcal{P}^{\mathrm{XC}}.
\end{equation}
We can define a filtration on the XC-Polyak algebra. For $n \geq 0$, let $\mathcal{F}^{\mathrm{XC}}_n \subset \mathcal{P}^{\mathrm{XC}}$ be the submodule spanned by XC-Gauss diagrams with at least $n$ decorations (chords or diamonds). This gives rise to a decreasing filtration 
$$\cdots \subset \mathcal{F}^{\mathrm{XC}}_n  \subset \mathcal{F}^{\mathrm{XC}}_{n-1}  \subset  \cdots \subset \mathcal{F}^{\mathrm{XC}}_1 \subset \mathcal{F}^{\mathrm{XC}}_0 =\mathcal{P}^{\mathrm{XC}}$$
which can be seen as an analogue of the Vassiliev-Goussarov filtration for singular knots.

If $\mathcal{P}^{\mathrm{XC}}_n := \mathcal{P}^{\mathrm{XC}}/\mathcal{F}^{\mathrm{XC}}_n $, then the filtration induces a sequence 
\begin{equation}\label{eq:sequence_FXC}
 \cdots \to \mathcal{P}^{\mathrm{XC}}_n \to \mathcal{P}^{\mathrm{XC}}_{n-1} \to \cdots \to \mathcal{P}^{\mathrm{XC}}_1 \to \mathcal{P}^{\mathrm{XC}}_0=0.
\end{equation}

Let $n \geq 0$. A \textit{finite type invariant of degree $n$} for XC-tangles with values in a $\Bbbk$-module $M$ is a $\Bbbk$-module map $$ v:   \mathcal{P}^{\mathrm{XC}}_{n+1} \to M. $$
Composing with the projection  $\mathcal{P}^{\mathrm{XC}} \to \mathcal{P}^{\mathrm{XC}}_{n+1}   $ and with the chain of isomorphisms \eqref{eq:chain_isos}, we get a ``genuine'' XC-knot invariant $v: \Bbbk \mathcal{K}^{\mathrm{XC}} \to M$.

We will write $$\mathcal{V}^{\mathrm{XC}}_n(M):= \hom{\Bbbk}{\mathcal{P}^{\mathrm{XC}}_{n+1} }{M}$$ for the $\Bbbk$-module of finite type invariants of XC-knots of degree $n$. Observe that applying $\hom{\Bbbk}{-}{M}$ to the sequence \eqref{eq:sequence_FXC} yields a sequence
\begin{equation}
0 = \mathcal{V}^{\mathrm{XC}}_{-1}(M) \hooklongrightarrow \mathcal{V}^{\mathrm{XC}}_0(M) \hooklongrightarrow \mathcal{V}^{\mathrm{XC}}_1(M)  \hooklongrightarrow \mathcal{V}^{\mathrm{XC}}_2(M) \hooklongrightarrow \cdots
\end{equation}
whose colimit $\mathcal{V}^{\mathrm{XC}}(M)$ is the set of all finite type invariants of XC-knots.

Let us briefly explain now how this relates to the theory of finite type invariants of framed, oriented, long knots and long virtual knots. There is a commutative diagram of $\Bbbk$-linear maps
\begin{equation}\label{eq:diagram_fti}
\begin{tikzcd}
\Bbbk \mathcal{K} \rar[hook] \dar  & \Bbbk v \mathcal{K} \rar[hook] \dar & \Bbbk\mathcal{K}^{\mathrm{XC}} \dar\\
\Bbbk \mathcal{K}_{n+1} \rar & \mathcal{P}_{n+1} \rar & \mathcal{P}_{n+1}^{\mathrm{XC}} 
\end{tikzcd}
\end{equation}
that we explain now. We have written $\mathcal{K}:= \eend{\Tup}{1}$ and  $v\mathcal{K}:= \eend{v\Tup}{1}$, and the top row is induced by the embeddings $ \Tup \hooklongrightarrow v \Tup \hooklongrightarrow \TXC $.  On the other hand, we have $\Bbbk \mathcal{K}_{n+1} := \Bbbk \mathcal{K} /\mathcal{F}^{\mathrm{VG}}_{n+1}$, where $\mathcal{F}^{\mathrm{VG}}_{n+1}$ is the submodule spanned by singular knots with at least $n+1$ singular points via the Vassiliev relation; these submodules form the Vassiliev-Goussarov filtration. Lastly,  $\mathcal{P}_{n+1} := \mathcal{P}/\mathcal{F}^{v}_{n+1}$ is the quotient of the (framed) Polyak algebra $\mathcal{P}$ modulo the submodule $\mathcal{F}^{v}_{n+1}$ spanned by arrow diagrams with at least $n+1$ chords, see \cite{polyak00}. The vertical arrows in \eqref{eq:diagram_fti} are all the projection maps.

It is well-known that  the image of $\mathcal{F}^{\mathrm{VG}}_{n+1}$  under the map $\Bbbk \mathcal{K} \hooklongrightarrow \Bbbk v \mathcal{K}$ lies in $\mathcal{F}^{v}_{n+1}$, see \cite[\S 7.4]{ohtsukibook}. Similarly, the injection $\Bbbk v \mathcal{K} \hooklongrightarrow \Bbbk\mathcal{K}^{\mathrm{XC}}$ maps $\mathcal{F}^{v}_{n+1}$ into  $\mathcal{F}^{\mathrm{VG}}_{n+1}$, as it will add at most some diamond decorations. The bottom horizontal maps in \eqref{eq:diagram_fti} are the induced by the top ones.

Applying $\hom{\Bbbk}{-}{M}$ to the bottom row of the diagram, we obtain a sequence
\begin{equation}\label{eq:maps_V}
\mathcal{V}^{\mathrm{XC}}_n(M) \to v\mathcal{V}_n(M) \to \mathcal{V}_n(M).
\end{equation}
It is a classical theorem by Goussarov that the second arrow $ v\mathcal{V}_n(M) \to \mathcal{V}_n(M)$ is surjective when $\Bbbk = \Z$, see \cite[Theorem 3.A]{GPV} (the statement there is phrased for unframed long knots but the same is true in the framed setting). It turns out that the same property applies to the first arrow.

\begin{proposition}\label{prop:surjection}
For any commutative ring $\Bbbk$, the map $\mathcal{V}^{\mathrm{XC}}_n(M) \to v\mathcal{V}_n(M)$ from \eqref{eq:maps_V} is a surjection.
\end{proposition}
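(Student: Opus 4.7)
The plan is to show that the $\Bbbk$-linear map $\iota:\mathcal{P}_{n+1}\to\mathcal{P}^{\mathrm{XC}}_{n+1}$ induced by the embedding $I_*:\Bbbk v\mathcal{K}\hookrightarrow\Bbbk\mathcal{K}^{\mathrm{XC}}$ from \cref{thm:vTup->TXC} is a split monomorphism of $\Bbbk$-modules. Once this is established, applying $\mathrm{Hom}_\Bbbk(-,M)$ and precomposing a given $v\in\mathrm{Hom}_\Bbbk(\mathcal{P}_{n+1},M)$ with any retraction $\phi$ of $\iota$ produces an XC-extension $v\circ\phi$, establishing surjectivity of the comparison map $\mathcal{V}^{\mathrm{XC}}_n(M)\to v\mathcal{V}_n(M)$.

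The natural candidate for the retraction comes from the forgetful functor $U:\TXC\twoheadrightarrow v\Tup$ of \cref{prop:forgetful_XC_virtual}, which by \cref{thm:vTup->TXC} satisfies $U\circ I=\id$, so at the $\Bbbk$-linear level $U_*\circ I_*=\id_{\Bbbk v\mathcal{K}}$. As warned at the end of \cref{subsec:vTup}, however, the naive ``forget diamonds'' operation on Gauss diagrams does \emph{not} descend to a map $\mathcal{P}^{\mathrm{XC}}\to\mathcal{P}$. To circumvent this, I would conjugate by the sum-over-subdiagrams isomorphisms from \eqref{eq:chain_isos}, defining
\[
\phi \;:=\; \bigl(\Bbbk v\mathcal{K}\toiso\mathcal{P}\bigr)\;\circ\;U_*\;\circ\;\bigl(\mathcal{P}^{\mathrm{XC}}\toiso\Bbbk\mathcal{K}^{\mathrm{XC}}\bigr)\,:\,\mathcal{P}^{\mathrm{XC}}\longrightarrow\mathcal{P}.
\]
This is automatically well-defined as a composition of three well-defined maps.

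The core calculation is to evaluate $\phi$ on the class of an XC-Gauss diagram $D$ with chord set $C$ and diamond set $\Delta$. Expanding the inverse sum-over-subdiagrams map as the M\"obius-style alternating sum $\sum_{D'\subseteq D}(-1)^{|D\setminus D'|}[D']$ and using that $U_*$ acts by forgetting diamonds on representatives, the summation over $\Delta'\subseteq\Delta$ factorises out of the formula and produces the factor $(1-1)^{|\Delta|}$, which vanishes whenever $\Delta\neq\emptyset$. A second M\"obius inversion in the chord variable collapses the remaining diamond-free case to $[D]$ itself, yielding
\[
\phi([D]) \;=\; \begin{cases}[D]\in\mathcal{P}, & \text{if $D$ has no diamond decoration,}\\ 0, & \text{if $D$ has at least one diamond.}\end{cases}
\]
In particular $\phi(\mathcal{F}^{\mathrm{XC}}_{n+1})\subseteq\mathcal{F}^v_{n+1}$: generators with a diamond are killed, while diamond-free generators of $\mathcal{F}^{\mathrm{XC}}_{n+1}$ must already have at least $n+1$ chords. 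Hence $\phi$ descends to $\bar\phi:\mathcal{P}^{\mathrm{XC}}_{n+1}\to\mathcal{P}_{n+1}$, and the splitting identity $\bar\phi\circ\iota=\id$ then follows from $U_*\circ I_*=\id$ by conjugation.

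The main conceptual subtlety, and where I expect the real difficulty to be hidden, is precisely the one flagged in the paper: the correct retraction is not the naive ``forget diamonds'' map on Gauss diagrams but rather the ``projection onto diamond-free classes'', an operation which only becomes transparent after passing through the sum-over-subdiagrams isomorphism $I$ and its inverse. Since no division is ever performed in the formula for $\phi$, the argument works over an arbitrary commutative ring $\Bbbk$, in agreement with the statement of the proposition.
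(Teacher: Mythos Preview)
Your proof is correct and follows exactly the same strategy as the paper's: show that $\mathcal{P}_{n+1}\to\mathcal{P}^{\mathrm{XC}}_{n+1}$ is a split monomorphism, then apply $\mathrm{Hom}_\Bbbk(-,M)$. The paper's proof is literally two lines and merely asserts the splitting; you have supplied the explicit retraction and verified it, which is genuinely useful added content.

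Two minor remarks. First, the warning you cite at the end of \cref{subsec:vTup} is about the \emph{other} direction (naively regarding an upwards Gauss diagram as a diamond-free XC-Gauss diagram does not descend at the $\mathcal{GD}$-level); it is not directly about ``forget diamonds'' failing on Polyak algebras. That said, your underlying concern is still valid: one can check directly, e.g.\ on $I(\widehat{G}0r)$ with $i=+1$, $j=-1$, that the naive ``forget diamonds'' map $F$ applied after $I$ does \emph{not} land in the Polyak relations of $\mathcal{P}$, so conjugation by $I$ really is needed. Second, your M\"obius computation shows that the conjugated map $I\circ F\circ I^{-1}$ coincides with the ``kill every diagram carrying a diamond'' projection; this is the cleanest way to see the splitting is well-defined and respects the filtrations, and is presumably what the paper's ``readily seen'' is gesturing at.
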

\begin{proof}
It is readily seen that the map $\mathcal{P}_{n+1} \to \mathcal{P}_{n+1}^{\mathrm{XC}} $ is a split monomorphism, hence we get the result applying $\hom{\Bbbk}{-}{M}$.
\end{proof}

However, it is not clear at all how far is this morphism to be an isomorphism.

\begin{problem}\label{prob:kernels}
Determine the kernel of $\mathcal{V}^{\mathrm{XC}}_n(M) \to v\mathcal{V}_n(M)$ for all $n \geq 0$.
\end{problem}

Another interesting question is related to the size of the spaces of finite type invariants. If $\Bbbk$ is a field, then the dimensions of the spaces of $\Bbbk$-valued finite type invariants of framed knots are known for low degrees,
\begin{center}
\begin{tabular}{c|c|c|c|c|c|c}
$n$ & 0 & 1 & 2 & 3 & 4 & 5 \\ 
\hline 
$\dim  \mathcal{V}_n(\Bbbk)$ & 1 & 2 & 4 & 7 & 13 & 23 \\ 
\end{tabular}
\end{center}
see \cite[\S 4.6]{CDM}. So we pose the following 

\begin{problem}
Determine the dimensions of  $\mathcal{V}^{\mathrm{XC}}_n(\Bbbk)$ for all $n \geq 0$, when $\Bbbk$ is a field.
\end{problem}

This is  a problem of combinatorial nature, and we should be able to compute the dimensions for low degrees using some modification of the computer program used in \cite{BNH} (in such a publication, the authors compute the dimensions of various spaces of finite type invariants of virtual long knots, in particular one where the Reidemeister 1 move does not appear; but they do not compute the dimensions of $v\mathcal{V}_n(\Bbbk)$ where the framed Reidemeister 1 move is used, so computing these dimensions is also an open problem).

\subsection{XC-Gauss diagram formulas for finite type invariants of knots}

An immediate corollary of Goussarov's theorem is that integer-valued finite type invariants of long (framed) knots admit Gauss diagram formulas. One of the most important consequences of \cref{prop:surjection} is that the same occurs for XC-Gauss diagrams.

We need a bit of notation first. Given $D,D' \in \mathsf{GD}^{\mathrm{XC}}_{\mathrm{diag}} $  two XC-Gauss diagrams in an oriented interval, we define the pairing $\langle D,D' \rangle$ as the coefficient of $D$ in $I(D')$, where $I$ is the isomorphism of \eqref{eq:I}, that is, $$ I(D')= \sum_D  \langle D,D' \rangle D  $$ ($\Bbbk = \Z$ here). In other words, $\langle D,D' \rangle$ is the number of times that $D$ appears as a subdiagram of $D'$.

\begin{corollary}\label{cor:GD_formulas}
Let $\Bbbk = \Z$ and let $v$ be an  integer-valued finite type invariant of long framed knots of degree $n$. Then there exists an element $G_v \in \Z  \mathsf{GD}^{\mathrm{XC}}_{\mathrm{diag}}$ consisting of diagrams with at most $n$ decorations (chords or diamonds) such that $$v(K) = \langle G_v, D_K  \rangle  $$ for any framed long knot $K$, where  $D_K$ is an XC-Gauss diagram representing $K$, and the pairing is extended to elements of $\Z  \mathsf{GD}^{\mathrm{XC}}_{\mathrm{diag}}$ by linearlity.
\end{corollary}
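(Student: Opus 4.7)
The plan is to combine \cref{prop:surjection} with Goussarov's theorem to lift $v$ to an XC-finite type invariant, and then read off $G_v$ from the Polyak-algebra isomorphism \eqref{eq:I2}. First, composing the two surjections
$$\mathcal{V}^{\mathrm{XC}}_n(\Z) \longtwoheadrightarrow v\mathcal{V}_n(\Z) \longtwoheadrightarrow \mathcal{V}_n(\Z),$$
where the first one is \cref{prop:surjection} and the second is \cite[Theorem 3.A]{GPV} (which applies in the framed case as the filtration $\mathcal{F}^v$ is designed for framed diagrams), I obtain a $\Z$-linear map $\tilde{v}:\mathcal{P}^{\mathrm{XC}}_{n+1}\to\Z$ whose pullback to $\Bbbk\mathcal{K}$ along the inclusion $\Bbbk\mathcal{K}\hooklongrightarrow\Bbbk\mathcal{K}^{\mathrm{XC}}$ agrees with $v$. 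In particular, viewing a framed long knot $K$ as an XC-knot via the canonical embeddings $\Tup\hooklongrightarrow v\Tup\hooklongrightarrow\TXC$, one has $v(K)=\tilde{v}(K)$.

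Next, I unfold $\tilde{v}(K)$ using the chain \eqref{eq:chain_isos}. Picking a representative XC-Gauss diagram $D_K \in \mathsf{GD}^{\mathrm{XC}}_{\mathrm{diag}}$ of $K$, its image in $\mathcal{P}^{\mathrm{XC}}$ under the second isomorphism in \eqref{eq:chain_isos} is
$$I(D_K) = \sum_{D \in \mathsf{GD}^{\mathrm{XC}}_{\mathrm{diag}}} \langle D, D_K\rangle\, D,$$
a finite sum since $D_K$ has finitely many subdiagrams. Since $\tilde{v}$ kills the image in $\mathcal{P}^{\mathrm{XC}}_{n+1}$ of any $D$ with more than $n$ decorations, only the terms with $|D|\le n$ survive when applying $\tilde{v}$. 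Now the set of equivalence classes of XC-Gauss diagrams on $\uparrow$ with at most $n$ decorations is finite (for fixed $k$ chords and $\ell$ diamonds with $k+\ell\le n$, the combinatorial data of cyclic/linear orderings, signs and chord orientations is finite), so choosing a finite system of representatives I can define
$$G_v := \sum_{|D|\le n} \tilde{v}(D)\, D \in \Z\mathsf{GD}^{\mathrm{XC}}_{\mathrm{diag}}.$$
Then $v(K)=\tilde{v}(K) = \sum_{|D|\le n}\tilde{v}(D)\,\langle D, D_K\rangle = \langle G_v, D_K\rangle$, as required.

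The construction is essentially a formal consequence of the two surjectivity statements together with the definition of the pairing $\langle\cdot,\cdot\rangle$ as the matrix of the isomorphism $I$, so no genuine obstacle arises. The only points that merit careful checking are: (i) that Goussarov's argument of \cite{GPV}, which is phrased for unframed long knots, transfers to the framed setting with the filtration $\mathcal{F}^v_{n+1}$ used here (this is standard and amounts to replacing $(\Omega 1)$ by $(\Omega 1f)$ in the Reidemeister relations on both sides); and (ii) the well-definedness of $\langle D, D_K\rangle$ for a representative $D_K$, which ultimately relies on the fact that the XC-Polyak relations $I(\widehat{G}0)$--$I(\widehat{G}3)$ were built precisely so that \eqref{eq:I2} is an isomorphism. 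Neither requires any new input beyond the tools already developed in the paper.
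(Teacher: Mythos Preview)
Your proof is correct and follows exactly the approach the paper intends: the corollary is stated without proof in the paper, being presented as an ``immediate consequence'' of \cref{prop:surjection} together with Goussarov's theorem, and you have simply fleshed out the details of that argument. The one cosmetic point is that when you speak of ``equivalence classes of XC-Gauss diagrams'' in the finiteness step, note that $\mathsf{GD}^{\mathrm{XC}}_{\mathrm{diag}}$ already consists of diagrams up to homeomorphism of pairs, so you are really summing over a finite set of combinatorial types, not over representatives of a further quotient; this does not affect the validity of the argument.
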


We would like to exemplify this result with the simplest non-trivial finite type invariant of framed knots. As usual, an unsigned diagram stands for the signed sum of all possible (XC-)Gauss diagrams obtained by assigning all possible signs to the given diagram.

\begin{proposition}\label{prop:framing_XC_formula}
For any framed long knot $K$, its framing $\mathrm{fr}(K)$ can be presented as
$$  \mathrm{fr}(K) = \Big\langle 2 \  \begin{array}{c}
\centering
\labellist \scriptsize \hair 2pt
\endlabellist
\includegraphics[scale=0.1]{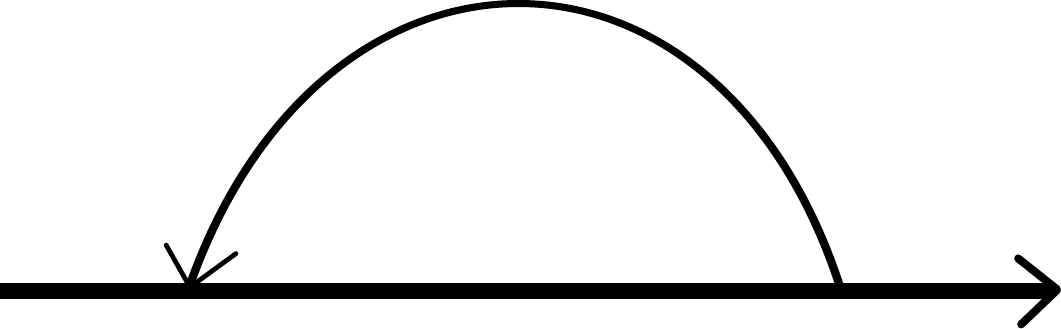} \end{array}  - \   \begin{array}{c}
\centering
\labellist \scriptsize \hair 2pt
\pinlabel{\normalsize $  \blacklozenge$} at 250 17
\endlabellist
\includegraphics[scale=0.1]{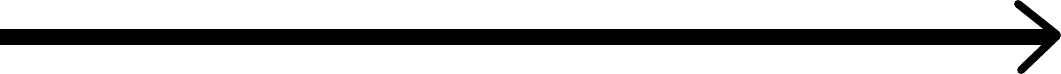} \end{array}  , D_K \Big\rangle .$$
\end{proposition}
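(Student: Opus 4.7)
The plan is to verify the claim in two steps: first, show that the right-hand side gives a well-defined $\Z$-valued invariant of XC-knots; second, identify this invariant with the framing. Unpacking the convention that each unsigned diagram denotes the signed sum over sign assignments weighted by the product of signs, the pairing expands as
\[
\Big\langle 2\,[\text{chord}] - [\blacklozenge], D_K \Big\rangle \;=\; 2 \sum_{c \in \text{chords}(D_K)} \epsilon(c) \;-\; \sum_{d \in \text{diamonds}(D_K)} \epsilon(d),
\]
so the identity to be shown has the shape $\mathrm{fr}(K) = 2W(D_K) - R(D_K)$, where $W$ and $R$ are the signed chord- and diamond-counts of $D_K$.

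For the first step, I would verify invariance of this $\Z$-valued quantity under each Gauss-diagram move $(\widehat{G}0), (\widehat{G}0r), (\widehat{G}1f), (\widehat{G}2), (\widehat{G}2'), (\widehat{G}3)$ from \cref{sec:XC-Gauss}. The moves $(\widehat{G}0)$ and $(\widehat{G}0r)$ only add or remove pairs of opposite-sign diamonds, or combine adjacent diamonds, and so preserve $\sum_d \epsilon(d)$; the moves $(\widehat{G}2), (\widehat{G}2')$ introduce pairs of chords of opposite signs, and $(\widehat{G}3)$ permutes chord endpoints without altering the multiset of signs, so $\sum_c \epsilon(c)$ is preserved. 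The decisive case is $(\widehat{G}1f)$, the Gauss-diagram incarnation of the framed Reidemeister~1 move, which exchanges a chord of a given sign for a specific configuration of diamonds; the coefficients $2$ and $-1$ are calibrated precisely so that the balance $2\,\Delta W - \Delta R = 0$ is maintained under this exchange.

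For the second step, both $\mathrm{fr}$ and the functional $v := \langle 2[\text{chord}] - [\blacklozenge], - \rangle$ are degree-$1$ finite type invariants of XC-knots in the sense of \cref{sec:fti}, and both vanish on the trivial long knot (whose XC-Gauss diagram is empty). To conclude $v = \mathrm{fr}$ it then suffices to match them on one generator of each framing class, which one can take to be the XC-representative of the $\pm 1$-framed long knot obtained via $I: v\Tup \hookrightarrow \TXC$ from a positive (resp.\ negative) kink in rotational form: a direct calculation shows that in each case the signed chord and diamond contributions combine to give exactly $\pm 1$, matching the framing. The main obstacle in this proof is the verification of invariance under $(\widehat{G}1f)$, where the exact form of the move (inherited from the rotational convention for framed Reidemeister~1) dictates the coefficients $2$ and $-1$; once this combinatorial check is complete, the remaining moves are routine and the identification with $\mathrm{fr}$ follows.
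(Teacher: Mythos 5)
There is a genuine gap, and it sits exactly at the point you identify as decisive. You expand the pairing as
$2\sum_{c\in\mathrm{chords}(D_K)}\epsilon(c)-\sum_{d\in\mathrm{diamonds}(D_K)}\epsilon(d)=2\,\mathrm{wr}(D_K)-\mathrm{rot}(D_K)$,
i.e.\ you read the first (unsigned) diagram as a chord in \emph{either} orientation on the interval. It is in fact a single chord in one \emph{specific} orientation: the formula the paper intends is
$\mathrm{fr}(K)=2\sum_{c\ \mathrm{under\ first}}\sign(c)-\mathrm{rot}(D_K)$,
where the sum runs only over crossings whose underpass is met first along the knot. The distinction is not cosmetic: the quantity $2\,\mathrm{wr}-\mathrm{rot}$ is \emph{not} invariant under $(\widehat{G}1f)$. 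That move keeps a chord of sign $+$ on both sides (changing only its orientation on the interval) and flips the sign of the adjacent diamond, so $\Delta\mathrm{wr}=0$ while $\Delta\mathrm{rot}=\pm 2$, and your balance $2\,\Delta W-\Delta R$ equals $\mp 2$, not $0$. With the correct reading, the orientation change of the chord contributes $\pm 1$ to the ``under first'' count, $2(\pm 1)-(\pm 2)=0$, and invariance holds. So the check you flag as the main obstacle would, as set up, actually refute your formula rather than confirm it. (Your description of $(\widehat{G}1f)$ as ``exchanging a chord for a configuration of diamonds'' is also not what the move does; both sides carry one chord and one diamond.)

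For comparison, the paper's proof is a one-line reduction: it quotes the identity $\mathrm{rot}(D)+\mathrm{wr}(D)=2\sum_{c\ \mathrm{under\ first}}\sign(c)$ for rotational diagrams of framed long knots, established earlier in the cited reference, and notes that rearranging it (using $\mathrm{fr}(K)=\mathrm{wr}(D)$) is exactly the stated XC-Gauss diagram formula. Your alternative route — direct invariance under $(\widehat{G}0)$--$(\widehat{G}3)$ followed by identification via the two-dimensionality of degree-$\le 1$ invariants of framed long knots — is perfectly viable and arguably more self-contained, but it only goes through once the first diagram is interpreted as an oriented-chord count; as written, step one fails at $(\widehat{G}1f)$ and the identification in step two is then moot.
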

\begin{proof}
It was shown in \cite[Corollary 3.7]{becerra_refined} that for a rotational diagram $D$ of a long, framed knot, we have $$ \mathrm{rot}(D)+ \mathrm{wr}(D) = 2  \cdot \sum_{ \substack{c \\ \mathrm{under \ first}}}    \sign (c),   $$ whose translation to XC-Gauss diagram formulas amounts precisely to the statement.
\end{proof}

XC-Gauss diagram formulas appear naturally when studying finite type invariants that are produced by universal invariants \cite[\S 4.5]{becerra_thesis}. It is in general a highly non-trivial task to express (if possible) a XC-Gauss diagram formula for framed, long knots where diamonds appear into one without them (this is of course related to \cref{prob:kernels}). Adding up to  \cref{prop:framing_XC_formula}, we will show in a future publication that
\begin{align*}
&\Big\langle  \begin{array}{c}
\centering
\labellist \scriptsize \hair 2pt
\pinlabel{\normalsize $  \blacklozenge$} at 150 17
\endlabellist
\includegraphics[scale=0.1]{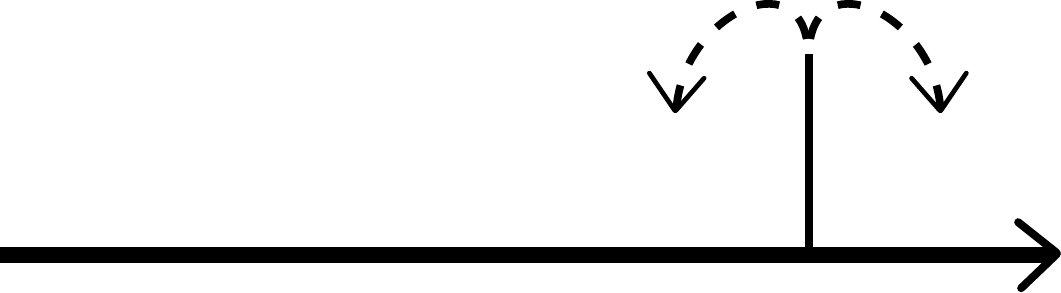} \end{array} + \begin{array}{c}
\centering
\labellist \scriptsize \hair 2pt
\pinlabel{\normalsize $  \blacklozenge$} at 350 17
\endlabellist
\includegraphics[scale=0.1]{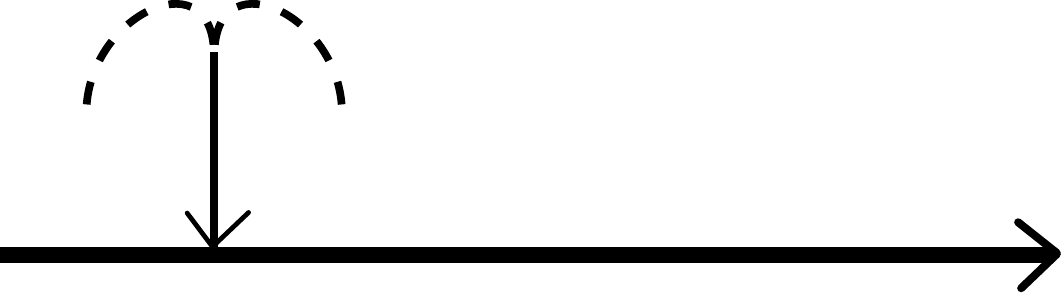} \end{array}  , D_K \Big\rangle = \\
&  \Big\langle   2 \Big (  \begin{array}{c}
\centering
\labellist \scriptsize \hair 2pt
\pinlabel{ $  \pm $}  at 80 110
\endlabellist
\includegraphics[scale=0.1]{figures/gauss_fr1} \end{array}  +  \begin{array}{c}
\centering
\labellist \scriptsize \hair 2pt
\endlabellist
\includegraphics[scale=0.1]{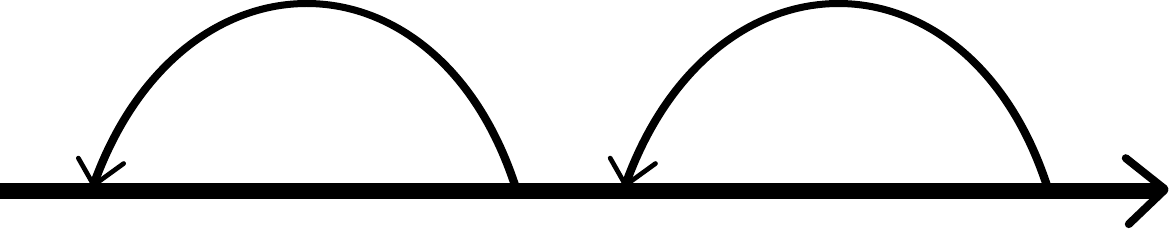} \end{array} -  \begin{array}{c}
\centering
\labellist \scriptsize \hair 2pt
\endlabellist
\includegraphics[scale=0.1]{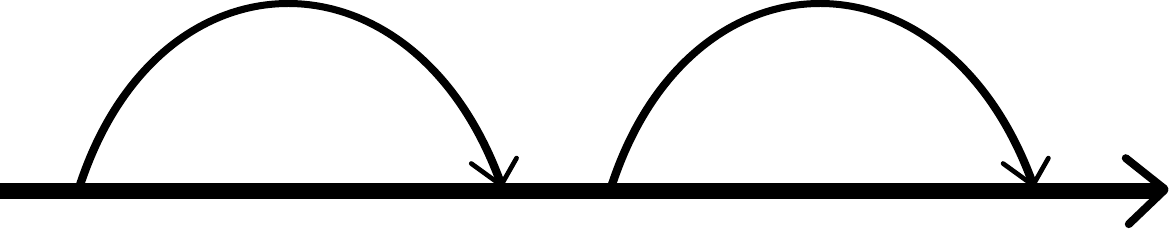} \end{array} \Big) +3 \begin{array}{c}
\centering
\labellist \scriptsize \hair 2pt
\endlabellist
\includegraphics[scale=0.1]{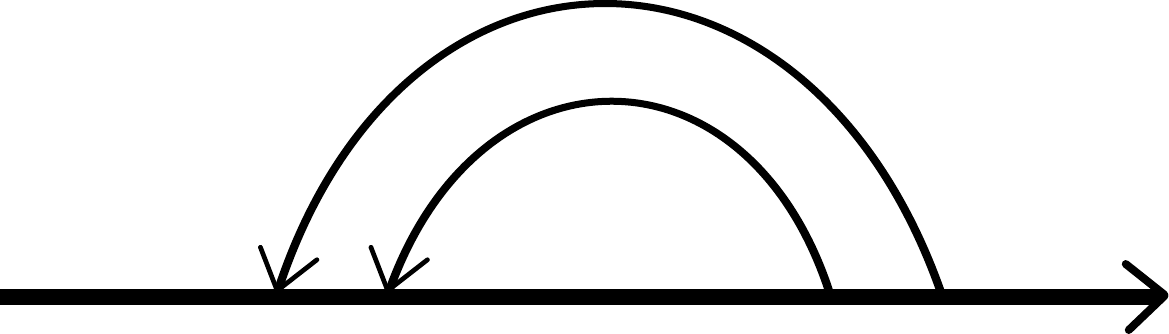} \end{array} \\
&- \begin{array}{c}
\centering
\labellist \scriptsize \hair 2pt
\endlabellist
\includegraphics[scale=0.1]{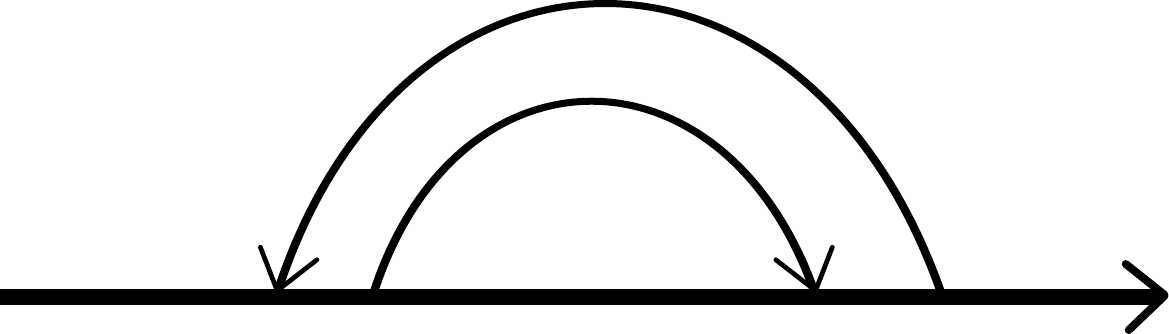}\end{array} - \begin{array}{c}
\centering
\labellist \scriptsize \hair 2pt
\endlabellist
\includegraphics[scale=0.1]{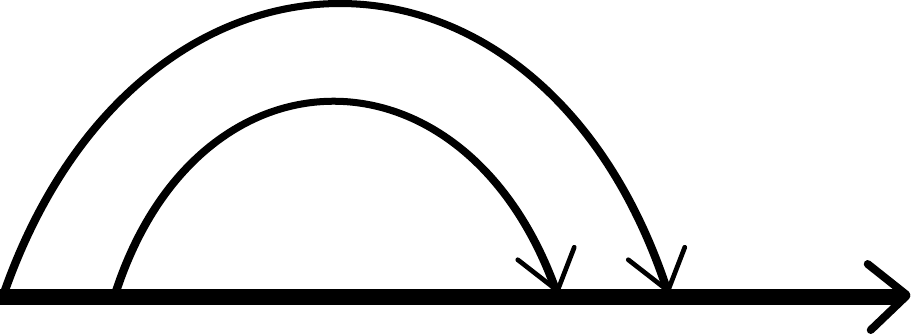}\end{array} - \begin{array}{c}
\centering
\labellist \scriptsize \hair 2pt
\endlabellist
\includegraphics[scale=0.1]{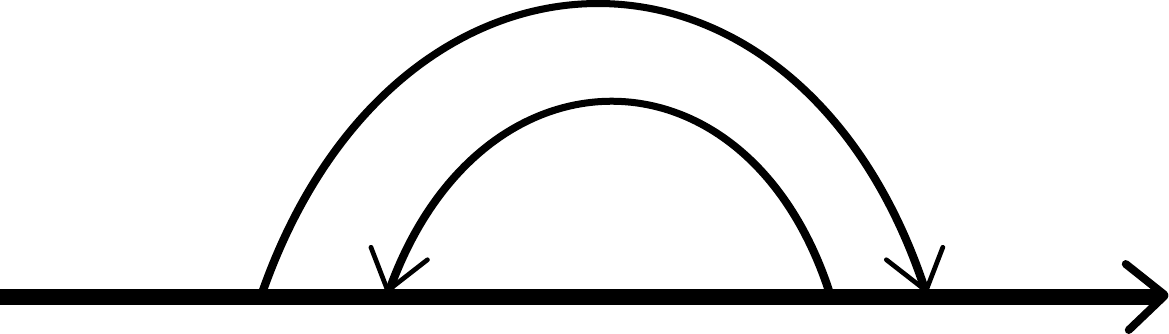} \end{array}   , D_K \Big\rangle.
\end{align*}
for any framed, long knot $K$. In this formula, the dashed, double heads or tails in the ``anchors''  stands for all possible combinations where the solid part is fixed and the dashed part can land anywhere in the diagram (so each of the anchored diagrams is a placeholder for a sum of three unsigned XC-Gauss diagrams).

It remains unclear under what conditions a given linear combination of XC-Gauss diagrams with diamond decorations produces a XC-Gauss diagram formula (that may or not be a isotopy invariant) that can be expressed as a standard Gauss diagram formula (without diamonds).

\subsection{Naturality of the universal XC-tangle invariant}

If $A$ is a ribbon Hopf algebra (hence in particular an XC-algebra), it is well-known that the universal invariant $Z_A$ has ``naturality'' properties with respect to strand doubling, strand deletion and strand orientation-reversal \cite{habiro,becerra_gaussians}. In a forthcoming publication we will see how the setup of XC-tangles provides a very convenient framework to study these naturality properties from a PROPeradic, operadic and double-categorical perspective.

\bibliographystyle{halpha-abbrv}
\bibliography{bibliografia}

\end{document}